\theoremstyle{plain}
\newtheorem{theorem}{Theorem}[section]
\newtheorem{lemma}[theorem]{Lemma}
\newtheorem{corollary}[theorem]{Corollary}
\newtheorem{proposition}[theorem]{Proposition}
\theoremstyle{definition}
\newtheorem{remark}[theorem]{Remark}
\newtheorem{assumption}[theorem]{Assumption}
\crefname{assumption}{Assumption}{Assumptions}
\crefname{proposition}{Proposition}{Propositions}
\crefname{lemma}{Lemma}{Lemmas}
\crefname{figure}{Figure}{Figures}
\crefname{corollary}{Corollary}{Corollaries}
\numberwithin{equation}{section}
\newcommand{\algtol}[1]{\ensuremath{\varepsilon_{\text{#1}}}}
\newcommand{\N}{\mathbb{N}}
\newcommand{\R}{\mathbb{R}}
\newcommand{\Rplus}{\R_+}
\newcommand{\ldef}{\coloneqq}
\newcommand{\e}{\mathrm{e}}
\newcommand{\parameterControlDim}{N_c}
\newcommand{\constraintSet}{\colon}
\newcommand{\embedding}{\hookrightarrow}
\newcommand{\embeddingc}{\hookrightarrow_c}
\newcommand{\dist}[2]{\ensuremath{d}\ensuremath{_{#1}(#2)}}
\newcommand{\semigroup}[1]{\ensuremath{\e^{#1}}}
\newcommand{\Lap}{\upDelta}
\newcommand{\trace}{\mathrm{Tr}}
\newcommand{\MPRHilbert}[3]{\ensuremath{H^{1}(#1;#2)\cap L^{2}(#1;#3)}}
\DeclarePairedDelimiter{\abs}{\lvert}{\rvert}
\DeclarePairedDelimiter{\norm}{\lVert}{\lVert}
\DeclarePairedDelimiter{\pair}{\langle}{\rangle}
\DeclarePairedDelimiter{\inner}{(}{)}
\DeclareMathOperator*{\argmin}{arg\!\min}
\newcommand{\unitball}[2][0]{\ensuremath{\mathcal{B}_{#2}(#1)}}
\newcommand{\ProbSimplex}[1]{\mathbb{P}\ensuremath{_{#1}}}
\newcommand{\ProjSimplex}{\operatorname{\Pi}}
\newcommand{\Q}{\ensuremath{U}}
\newcommand{\Qad}{\ensuremath{U}_{ad}}
\newcommand{\ControlOp}{\ensuremath{B}}
\newcommand{\ControlMeasureSpace}{\ensuremath{(\omega,\varrho)}}
\newcommand{\Lcontrol}[2][I]{\ensuremath{L^{#2}(#1\times\omega)}}
\newcommand{\LcontrolSpatial}[1][2]{\ensuremath{L^{#1}(\omega)}}
\newcommand{\D}[1]{\,\mathrm{d}\ensuremath{#1}}
\renewcommand{\email}[1]{{e-mail: \tt#1}}
\begin{document}

\title[Time-optimality by distance-optimality for parabolic control systems]{Time-optimality by distance-optimality for parabolic\\ control systems}
\thanks{The first author acknowledges support from
	the International Research Training Group IGDK1754, funded by the DFG and FWF.
	The second author acknowledges  support by the ERC advanced grant 668998 (OCLOC) under the EU's H2020
	research program.}

\author[Lucas Bonifacius and Karl Kunisch]{Lucas Bonifacius and Karl Kunisch}

\begin{abstract}
The equivalence of time-optimal
and distance-optimal control problems is shown
for a class of parabolic control systems.
Based on this equivalence, 
an approach for the efficient algorithmic solution
of time-optimal control problems is investigated.
Numerical examples are provided to illustrate
that the approach works well is practice.	
\end{abstract}
%
%
\subjclass{49K20, 49M15}
%
\keywords{Time-optimal controls, Bang-bang controls, Distance optimal controls, Parabolic control systems}

\thanks{Fakult\"at f\"ur Mathematik, Technische Universit\"at M\"unchen; \email{lucas.bonifacius@tum.de}}
\thanks{Institute for Mathematics and Scientific Computing, University of Graz; \email{karl.kunisch@uni-graz.at}}

\maketitle
\section{Introduction}
This article is devoted to time optimal control problems for parabolic systems.
Specifically, we propose a formulation which is equivalent 
to the original time optimal control formulation and
amenable for numerical realization. We  consider the problem 
\begin{equation}\label{P}\tag{\ensuremath{P}}
\mbox{Minimize~} T
\quad\text{subject to}\quad
\left\{
\begin{aligned}
T &> 0\mbox{,}\\
u &\in \Qad(0,T)\mbox{,}\\
\partial_t y + A y &= Bu\mbox{,} &\mbox{in~} (0,T)\mbox{,}\\
y(0) &= y_0\mbox{,}\\
\norm{y(T)-y_d}_H &\leq \delta_0\mbox{,}\\
\end{aligned}
\right.
\end{equation}	
where $y$ denotes the
state, $u$ the control, and $T$ the terminal time.
Here, the set of admissible controls is
\[
\Qad(0,T) \ldef \left\{u \in L^2((0,T); L^2(\omega)) \constraintSet  u_a \leq u(t) \leq u_b\mbox{~a.e.~} t\in (0,T)\right\}
\]
for $u_a, u_b \in L^\infty(\omega)$ the control constraints,
where $\omega$ is a measurable set.
Moreover, $A$ is an unbounded operator satisfying
G\aa rding's inequality and 
$B$ is the (bounded) control operator; see also \cref{sec:notation_assumptions}
for the precise assumptions.
The goal is to steer the system into a ball centered at $y_d$ with radius $\delta_0$
in the shortest time possible.
Note that the state equation is posed on a variable time horizon
which causes a nonlinear dependency of the state $y$
with respect to the terminal time $T$ and the control $u$.
For this reason,~\eqref{P} is a nonlinear and nonconvex
optimization problem subject to control as well
as state constraints.
Additionally we emphasize that
the objective functional does not contain control costs
which complicates the algorithmic solution of~\eqref{P}
compared to the situation with an $L^2$ term in the objective;
cf., e.g., \cite{Ito2010,Kunisch2015}.

In this article, we propose an
equivalent reformulation in
terms of minimal distance problems
that can be used algorithmically
to solve the time-optimal control problem.
For $\delta > 0$ consider the
perturbed time-optimal control problem defined as
\begin{equation}\label{TOPT_state_perturbed}\tag{$P_\delta$}
\inf_{\substack{T > 0\\ u \in \Qad(0,T)}} \; T \quad\text{subject to}\quad \norm{y[u](T) - y_d}_H \leq \delta_0 + \delta,
\end{equation}	
where $y[u](T)$ denotes the state associated with the control $u$
evaluated at $T$.
Moreover, we consider the \emph{minimum distance control problem}
\begin{equation}\label{DOPT_intro}\tag{$\delta_T$}
\mbox{Minimize~} \norm{y[u](T) - y_d}_H - \delta_0 \quad\mbox{subject to}\quad u \in \Qad(0,T)\mbox{.}\\
\end{equation}
Under weak assumptions we show that
the associated value functions
defined by
\[
T(\delta) = \inf\eqref{TOPT}\quad\text{and}\quad
\delta(T) = \inf\eqref{DOPT}
\]
are inverse to each other; see \cref{prop:distanceoptimal_delta_continuous}.
Furthermore, we prove that~\eqref{TOPT_state_perturbed} 
and~\eqref{DOPT_intro} are equivalent.
Precisely, if $\delta > 0$ is given and $(T,\bar{u})$ is optimal for~\eqref{TOPT_state_perturbed},
then $\bar{u}$ is also distance-optimal for~\eqref{DOPT_intro}.
Conversely, if $T > 0$ is given and $\bar{u}$ is distance optimal,
then $(T,\bar{u})$ is time-optimal with $\delta = \delta(T)$; 
see \cref{thm:equivalence_topt_dopt} for details.
Hence, instead of solving the time-optimal control problem directly, 
we can search for a root of the $\delta(\cdot)$-value function.
A similar equivalence first appeared in \cite{Wang2013a}
(see also \cite[Section~5.4]{Wang2018})
for the situation where one aims at delaying 
the activation of the control as long as possible. 
However, to the best of our knowledge it has never 
been considered for an algorithmic approach.
In this regard, we also mention a similar approach used in \cite{Gugat2000}
for time-optimal control of a one-dimensional vibrating system
with controls in a subspace of $L^2$
determined by certain moment equations.	

We show that the $\delta(\cdot)$-value function
is continuously differentiable for many important control scenarios;
see \cref{sec:minimal_distance_value_function}.
If in addition qualified optimality conditions
hold for the original problem,
then the derivative of $\delta(\cdot)$
is nonvanishing near the optimal solution;
see \cref{prop:equiv_qualified_optcond_deltaprime_nonzero}.
This justifies to use a Newton method 
for the calculation of a root of $\delta(\cdot)$.
Moreover, under an additional assumption we show
that the derivative of the value function
is Lipschitz continuous which guarantees fast local convergence
of the Newton method. 
In fact, in all our numerical examples, 
we observe quadratic order of convergence,
even if the additional assumption does not hold.
For the solution of the resulting minimal distance
problem with simple control constraints, 
the literature offers a wide spectrum of algorithms.

Time optimal control problems are among the most studied problems of optimal control,
and thus it comes at no surprise that diverse techniques have been proposed for their solution.
In the following let us briefly describe some of them. 
An approach, which is conceptually close,
rests on an equivalent reformulation
utilizing minimum norm problems.
In contrast, to the perturbations in the terminal constraint as in~\eqref{TOPT},
perturbations in the control constraint are introduced. 
To explain the approach, 
we consider the time optimal control problem 
\begin{equation}\label{TOPT_control_perturbed}\tag{$P_\rho$}
\inf_{\substack{T > 0\\ u \in L^2((0,T)\times\omega)}} \; T \quad\text{subject to}\quad \norm{y[u](T) - y_d}_H \leq \delta_0,\quad \norm{u}_{L^\infty((0,T); L^2(\omega))} \leq \rho,
\end{equation}
which is related to the \emph{minimal norm problem} defined as
\begin{equation}\label{TOPT_minimal_norm}\tag{$N_T$}
\inf_{u \in L^2((0,T)\times\omega)} \;  \norm{u}_{L^\infty((0,T); L^2(\omega))} \quad\text{subject to}\quad \norm{y[u](T) - y_d}_H \leq \delta_0.
\end{equation}
To follow much of the literature,
we adapted the control constraints to be chosen in $L^\infty((0,T); L^2(\omega))$
rather than $L^\infty((0,T)\times\omega)$.
Under appropriate controllability assumptions these 
two problems have been shown to be equivalent; see \cite{Krabs1982,Fattorini2005,Gozzi1999,Wang2012a,Qin2018}
for parabolic equations,
\cite{Zhang2013} for time-varying ordinary differential equations,
\cite[Chapter~5]{Wang2018} for abstract evolution equations,
and \cite{Zhang2015} for the Schr\"odinger equation.
Note that typically these publications consider the case of exact controllability, 
i.e.\ $\delta_0 = 0$, or exact null controllability, 
i.e.\ $\delta_0 = 0$ and $y_d = 0$. 
The solution to~\eqref{TOPT_minimal_norm} can be determined
by solving an unconstrained optimization problem given by
\begin{equation}\label{TOPT_minimal_norm_variational}
\inf_{\varphi_T \in H} \frac{1}{2}\left(\int_{0}^{T}\norm{B^*\varphi(t)}_{L^2(\omega)}\D{t}\right)^2 + \inner{\varphi(0),y_0} + \delta_0 \norm{\varphi_T}_{H} - \inner{y_d,\varphi_T},
\end{equation}
where $\varphi$ is the solution to the adjoint state equation
\[
-\partial_t\varphi + A^*\varphi = 0,\quad \varphi(T) = \varphi_T;
\]
see \cite[Section~4]{Qin2018}, compare also \cite[Section~1.7]{Glowinski2008}.
If $\bar{\varphi}_T$ is the minimizer of~\eqref{TOPT_minimal_norm_variational},
then the minimum norm control is given by 
\[
\bar{u}(t) = \left(\int_{0}^{T} \norm{B^*\bar{\varphi}(t)}_{L^2(\omega)}\D{t}\right)
\frac{(B^*\bar{\varphi})(t)}{\norm{(B^*\bar{\varphi})(t)}_{L^2(\omega)}},\quad\text{a.e.~} t\in (0,T),
\]
where $\bar{\varphi}$ is the adjoint state with terminal value $\bar{\varphi}_T$.
Turning to the numerical realization,
as far as we know,
the only algorithmic studies based on this equivalence
are \cite{Lu2017} for time-optimal control problems
subject to ordinary differential equations
and \cite{Muench2013} for problems
subject to partial differential equations employing
an optimal design approach.	
A direct numerical realization of \eqref{TOPT_minimal_norm}
is impeded by the difficulties related to the appearance of the state constraint
and the fact that the minimization is carried out over a non-reflexive Banach space.
In contrast~\eqref{TOPT_minimal_norm_variational} does not contain state constraints.
Turning to the realization of~\eqref{TOPT_minimal_norm}
by means of~\eqref{TOPT_minimal_norm_variational},
one has to cope with the non-smoothness of the $L^1$-norm,
whereas~\eqref{DOPT_intro}
involves the minimization of a Hilbert space norm.	
In addition,~\eqref{TOPT_minimal_norm_variational}
can be considered as an inverse source problem
for the initial condition of the adjoint problem.
Such problems are inherently ill-posed.
For the specific context of~\eqref{TOPT_minimal_norm_variational} 
this was analyzed in \cite{Muench2010}.

An alternative approach to solve time-optimal control
problems for finite or infinite dimensional systems
is based on solving the optimality system for~\eqref{P} 
after adding a regularization term of the form $\alpha\norm{u}^2$
to the cost functional. In an additional outer loop
the regularization parameter $\alpha$ can be driven to zero;
see \cite{Ito2010,Kunisch2013a,Kunisch2015}.
This is a flexible method, but one has to cope
with the difficulties of the asymptotic behavior as
the regularization parameters tends to zero.
We compare our approach with the regularization approach
in one numerical example
and observe that even for a fixed regularization parameter
our algorithm performs roughly five to ten times faster
in terms of the required number of solves for the partial differential equation; see \cref{table:example2_costs}.

Yet another approach, which has mostly been investigated
for time-optimal control problems subject to
ordinary differential equations, rests on the reformulation of~\eqref{P}
as an optimization problem with respect to the switching points
of the optimal controls; see, e.g., \cite{Kaya2003,Meier1990}.
This approach cannot be extended to the distributed control setting
in a straightforward way.

This paper is organized as follows: In \cref{sec:notation_assumptions}
we introduce the notation and main assumptions.
The equivalence of time and distance optimal controls
is proved in \cref{sec:equiv_time_distance_optimal_controls}.
\Cref{sec:time_optimal_control} is devoted to
general properties of the time-optimal control
problem.
Differentiability of the value function
associated to the minimal distance problems
is proved in \cref{sec:minimal_distance_value_function}.
The algorithm is presented in \cref{sec:algorithm}.
Various numerical examples in \cref{sec:numerical_examples}
show that our approach is efficient in practice.
Last, in \cref{sec:open_problems} we conclude with some open problems.

\section{Notation and main assumptions}
\label{sec:notation_assumptions}

Let $V$ and $H$ be real Hilbert spaces 
forming a Gelfand triple, i.e.\ $V \embeddingc H \cong H^* \embedding V^*$,
where $\embedding$ denotes the continuous embedding and
$\embeddingc$ the continuous and compact embedding.
We abbreviate the duality pairing between $V$ and $V^*$
as well as the inner product and norm in \(H\) by
\[
\pair{\cdot,\cdot} = \pair{\cdot,\cdot}_{V^*,V}, \quad
\inner{\cdot,\cdot} = \inner{\cdot,\cdot}_H, \quad
\norm{\cdot} = \norm{\cdot}_H\mbox{.}
\]
\begin{assumption}
	Let \(a \colon V \times V \to \R\) be a continuous bilinear form, which satisfies
	the G\aa rding inequality (also referred to as weak coercivity):
	There are constants $\alpha_0 > 0$ and $\omega_0 \geq 0$ such that
	\begin{equation}
	\label{eq:A_Garding}
	a(v,v) + \omega_0\norm{v}^2 \geq \alpha_0\norm{v}^2_V\quad \text{for all~} v \in V\mbox{.}
	\end{equation}
\end{assumption}
We denote by \(A \colon V \subset V^* \to V^*\) the unique linear operator with
\[
\pair{Ay,v} = a(y,v) \quad\text{for all~} v \in V.
\]
The G\aa rding inequality implies that
$-A$ generates an analytic semigroup on $V^*$ 
denoted $\semigroup{-\cdot A}$; see, e.g., \cite[Section~1.4]{Ouhabaz2005}.
\begin{assumption}\label{assumptionControl}		
	Let $\ControlMeasureSpace$ be a measure space. 
	We assume that the control operator $B \colon L^2\ControlMeasureSpace \rightarrow V^*$ is linear and continuous.
	Moreover, $y_d \in H$ is the desired state and $\delta_0 > 0$.
\end{assumption}
The abstract measure space allows for one consistent notation
for different control scenarios. 
For example, in case of a distributed control 
on a subset $\omega$ of the spatial domain $\Omega \subset \R^d$
we take $\omega$ equipped with the Lebesgue measure.
If no ambiguity arises, we drop the measure $\varrho$ and simply write $\omega$ in the following.
The space of admissible controls is defined as
\begin{equation*}
\Qad \ldef \left\{u \in \LcontrolSpatial \constraintSet  u_a \leq u \leq u_b \;\mbox{~a.e.\ in~}\; \omega\right\} \subset \LcontrolSpatial[\infty]
\end{equation*}
for $u_a, u_b \in L^\infty(\omega)$ with $u_a < u_b$ almost everywhere.
In addition, for $T > 0$ we set $\Q(0,T) \ldef L^2((0,T)\times\omega)$ and
\begin{equation*}
\Qad(0,T) \ldef \left\{u \in \Q(0,T) \constraintSet  u(t) \in \Qad\mbox{~a.e.~} t\in (0,T)\right\} \subset \Lcontrol[(0,T)]{\infty},
\end{equation*}	
where $(0,T)\times\omega$ is equipped with the completion of 
the product measure.
For $T > 0$ we use $W(0,T)$ to abbreviate
$\MPRHilbert{(0,T)}{V^*}{V}$, endowed with the canonical norm and inner product. The
symbol $i_T \colon W(0,T) \rightarrow H$ denotes the trace mapping $i_T y = y(T)$.
For any two Banach spaces $X$ and $Y$, let $\mathcal{L}(X,Y)$ denote 
the space of linear and bounded operators from $X$ to $Y$.
The symbol $\unitball[x]{r}$
stands for the ball centered at $x \in X$ with radius $r >0$ in $X$. 
Moreover, $\Rplus$ abbreviates the open interval $(0,+\infty)$.

Last, to ensure the existence of optimal controls we require the following
\begin{assumption}\label{assumption:existence_feasible_control}
	There exist a finite time $T > 0$ and a feasible control $u \in \Qad(0,T)$ such that the solution to the state equation of \eqref{P} satisfies $\norm{y(T) - y_d} \leq \delta_0$. To exclude the trivial case, 
	we in addition assume $\norm{y_0 - y_d} > \delta_0$.
\end{assumption}

\section{Equivalence of time and distance optimal controls}
\label{sec:equiv_time_distance_optimal_controls}
Instead of solving the time-optimal control problem directly,
we propose to solve
an equivalent reformulation in terms of
minimal distance control problems.
The reformulation leads to 
a bilevel optimization problem, 
where we search for a root of a certain value function in the outer loop
and solve convex optimization problems in the inner loop.
We start by proving the equivalence of minimal time and minimal distance controls.

For any $\delta \geq 0$ we consider the perturbed time-optimal control problem
\begin{equation}\label{TOPT}\tag{$P_{\delta}$}
\begin{aligned}
\mbox{Minimize~} T \quad\mbox{subject to}\quad T&\in\Rplus\mbox{,~} 
u \in \Qad(0,T)\mbox{,}\\
\norm{y[u](T) - y_d} &\leq \delta_0 + \delta\mbox{.}
\end{aligned}
\end{equation}
Moreover, for fixed $T > 0$ we consider the \emph{minimal distance control problem}
\begin{equation}\label{DOPT}\tag{$\delta_T$}
\mbox{Minimize~} \norm{y[u](T) - y_d} - \delta_0 \quad\mbox{subject to}\quad u \in \Qad(0,T)\mbox{.}\\
\end{equation}
Note that \eqref{TOPT} is a nonlinear and nonconvex optimization problem subject 
to control as well as state constraints, whereas~\eqref{DOPT} is a convex problem subject 
to control bounds only.

We define the value functions $T \colon [0,\infty) \to [0,\infty]$ and $\delta \colon [0,\infty) \to [0,\infty)$ as
\[
T(\delta) = \inf\eqref{TOPT}\quad\text{and}\quad
\delta(T) = \inf\eqref{DOPT}.
\]
Let us formulate the main result of this section.
\begin{theorem}\label{thm:equivalence_topt_dopt}
	Let $T(\cdot)$ be left-continuous.
	If $T \in (0, T(0)]$ and $u \in \Qad(0,T)$ is distance-optimal for~\eqref{DOPT}, then $(T,u)$ is also time-optimal for $(P_{\delta(T)})$.
	Conversely, if $\delta \in [0,\delta^\bullet)$ and $(T,u) \in \Rplus\times\Qad(0,T)$ is time-optimal for~\eqref{TOPT}, then $u$ is also distance-optimal for $(\delta_T)$,
	where $\delta^\bullet = \norm{y_0 - y_d} - \delta_0$.
\end{theorem}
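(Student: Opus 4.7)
The plan is to use \cref{prop:distanceoptimal_delta_continuous}, which, under the left-continuity hypothesis on $T(\cdot)$, states that $T(\cdot)$ and $\delta(\cdot)$ are inverse to each other on the relevant intervals: $T(\delta(T)) = T$ for $T \in (0, T(0)]$ and $\delta(T(\delta)) = \delta$ for $\delta \in [0, \delta^\bullet)$. Once this is granted, each implication reduces to pairing feasibility with one of these identities; only in the converse direction does one need an additional activity-of-constraint argument that exploits the concrete structure of the parabolic system.

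For the first implication, let $u \in \Qad(0,T)$ be distance-optimal for \eqref{DOPT} with $T \in (0, T(0)]$. Then $\norm{y[u](T) - y_d} - \delta_0 = \delta(T)$, so $u$ is admissible for the perturbed time-optimal problem at terminal time $T$ with perturbation $\delta(T)$, giving $T(\delta(T)) \le T$. The inverse identity $T(\delta(T)) = T$ turns this into equality, so $(T, u)$ attains the infimum in $(P_{\delta(T)})$ and is time-optimal.

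For the converse, suppose $(T, u)$ is time-optimal for \eqref{TOPT} with $\delta \in [0, \delta^\bullet)$, so $T = T(\delta)$ and, because of $\delta < \delta^\bullet$, we have $T > 0$. The main new step is to show that the distance constraint is active: $\norm{y[u](T) - y_d} = \delta_0 + \delta$. If this inequality were strict, I would use the continuous embedding $W(0,T) \embedding C([0,T]; H)$ to conclude that $t \mapsto y[u](t)$ is continuous in $H$, pick $T' < T$ sufficiently close to $T$ so that $\norm{y[u](T') - y_d} \le \delta_0 + \delta$, and observe that by causality $u|_{(0,T')} \in \Qad(0,T')$ produces the same state trace $y[u](T')$. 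The pair $(T', u|_{(0,T')})$ is then admissible for \eqref{TOPT}, contradicting the minimality of $T$. With activity established, $\delta(T) \le \norm{y[u](T) - y_d} - \delta_0 = \delta$, and combining this with $T = T(\delta)$ and the inverse identity $\delta(T(\delta)) = \delta$ forces $\delta(T) = \delta$. Hence $u$ attains the infimum in \eqref{DOPT}.

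The main obstacle I anticipate is the activity argument in the converse direction, since it is the only place where the value-function calculus does not suffice and one has to invoke time-continuity of the state and causal truncation of the control. The role of the range restrictions then becomes transparent: $\delta \in [0, \delta^\bullet)$ rules out the degenerate case in which $y_0$ already satisfies the distance constraint (so that $T > 0$ and a downward perturbation $T' < T$ makes sense), while $T \le T(0)$ keeps us on the branch of $T(\cdot)$ on which the inverse identity is informative.
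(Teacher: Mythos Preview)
Your proof is correct and follows essentially the same route as the paper: both directions are reduced to the inverse identities of \cref{prop:distanceoptimal_delta_continuous} together with feasibility/activity of the terminal constraint. The only difference is that you spell out the activity argument in the converse direction, whereas the paper simply asserts $\norm{y[u](T)-y_d}-\delta_0=\delta$ (having already observed this equivalence in the proof of \cref{prop:T_right_continuous} and in \cref{prop:existence}).
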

The proof of \cref{thm:equivalence_topt_dopt} will be given in the following.
We first note that
due to boundedness of $\Qad$, linearity of the control-to-state mapping (for fixed $T > 0$), and weak lower semicontinuity of the norm, the problem~\eqref{DOPT} is well-posed,
and for this reason the value function $\delta(\cdot)$ is well-defined. 
In contrast, to verify well-posedness of~\eqref{TOPT} we require \cref{assumption:existence_feasible_control}; cf.\ also \cref{prop:existence}.
\begin{proposition}\label{prop:T_value_function_finite}
	The value function $T$ is finite, i.e.\
	$T(\cdot) < \infty$ on $[0,\infty)$.
\end{proposition}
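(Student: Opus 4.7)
The plan is to derive finiteness of $T(\cdot)$ directly from \cref{assumption:existence_feasible_control}, observing that relaxing the terminal constraint can only make the problem easier.

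First I would note that \cref{assumption:existence_feasible_control} provides a finite time $T_0 > 0$ and a control $u \in \Qad(0,T_0)$ with $\norm{y[u](T_0) - y_d} \leq \delta_0$. This pair is feasible for $(P_0)$, so by definition of the infimum we have $T(0) \leq T_0 < \infty$.

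Next, for any $\delta > 0$, the admissible set of $(P_\delta)$ contains the admissible set of $(P_0)$, since the terminal constraint $\norm{y[u](T) - y_d} \leq \delta_0 + \delta$ is strictly weaker than $\norm{y[u](T) - y_d} \leq \delta_0$. Consequently the value function is monotonically non-increasing in $\delta$, giving
\[
T(\delta) \leq T(0) \leq T_0 < \infty
\]
for every $\delta \in [0,\infty)$.

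There is no real obstacle here: the statement is essentially a restatement of the standing hypothesis together with monotonicity of the value function in the constraint level. The only thing to be slightly careful about is that the feasible pair furnished by \cref{assumption:existence_feasible_control} is indeed feasible for the \emph{unperturbed} problem $(P_0)$, which is immediate from how that assumption is phrased.
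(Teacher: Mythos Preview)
Your argument is correct and essentially identical to the paper's: both use the feasible pair from \cref{assumption:existence_feasible_control} and observe that it remains feasible for every $(P_\delta)$ with $\delta \geq 0$, yielding $T(\delta) \leq T_0 < \infty$. The only cosmetic difference is that you route the inequality through $T(0)$ explicitly, whereas the paper states feasibility for $(P_\delta)$ directly.
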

\begin{proof}
	Let $(T,u) \in \Rplus\times\Qad(0,T)$ be the feasible point from \cref{assumption:existence_feasible_control},
	i.e.\ $\norm{y[u](T) - y_d} \leq \delta_0$.
	Clearly, $(T,u) \in \Rplus\times\Qad(0,T)$ is also feasible for $(P_{\delta})$ for any $\delta > 0$.
	Thus, $T(\delta) \leq T < \infty$.
\end{proof}
\begin{proposition}
	\label{prop:T_right_continuous}
	Set $\delta^\bullet = \norm{y_0 - y_d} - \delta_0$. The function $T \colon [0,\delta^\bullet] \to [0,\infty)$ is strictly monotonically decreasing and right-continuous.
\end{proposition}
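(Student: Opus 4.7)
The non-strict monotonicity will be immediate from inclusion of feasible sets: if $\delta_1 \leq \delta_2$, every $(T,u)$ feasible for $(P_{\delta_1})$ satisfies the weaker terminal constraint of $(P_{\delta_2})$, so $T(\delta_2) \leq T(\delta_1)$. The two nontrivial claims rest on a common compactness/extraction step, which I state once and invoke twice. Given $(T_n, u_n) \in \Rplus \times \Qad(0, T_n)$ with $T_n \to T^\star > 0$ and $\norm{y[u_n](T_n) - y_d} \leq \delta_0 + \delta_n$ for some $\delta_n \to \delta^\star$, the plan is to extend each $u_n$ by a fixed admissible value on $(T_n, T^\star + 1)$, obtaining a sequence in $\Qad(0, T^\star + 1)$ uniformly bounded in $L^2$; a weak cluster point $u^\star$ then lies in $\Qad(0, T^\star + 1)$, and combining the compact embedding $V \embeddingc H$ with an Aubin--Lions argument will yield convergence of the associated states in $C([0, T^\star + 1]; H)$. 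This would give $y[u_n](T_n) \to y[u^\star](T^\star)$ in $H$, so that $(T^\star, u^\star|_{(0, T^\star)})$ is feasible for $(P_{\delta^\star})$.

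For right-continuity at $\delta \in [0, \delta^\bullet)$, I would pick $\delta_n \downarrow \delta$ and near-optimal $(T_n, u_n)$ for $(P_{\delta_n})$ with $T_n \leq T(\delta_n) + 1/n$. Since $T$ is non-increasing, $T(\delta_n)$ has a limit $T^\star \leq T(\delta)$, and $T_n \to T^\star$ as well. Positivity $T^\star > 0$ follows because $T_n \to 0$ would force $y[u_n](T_n) \to y_0$ in $H$, contradicting $\norm{y_0 - y_d} > \delta_0 + \delta$. The extraction then supplies a pair feasible for $(P_{\delta})$ at time $T^\star$, so $T(\delta) \leq T^\star$ and equality holds.

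For strict monotonicity, I would fix $\delta_1 < \delta_2 \leq \delta^\bullet$; then $T(\delta_1) > 0$, since for $T \to 0$ the states $y[u](T)$ converge to $y_0$ uniformly over bounded controls and $\norm{y_0 - y_d} > \delta_0 + \delta_1$. Applying the extraction to a minimizing sequence for $(P_{\delta_1})$ produces $u^\star \in \Qad(0, T(\delta_1))$ with $\norm{y[u^\star](T(\delta_1)) - y_d} \leq \delta_0 + \delta_1 < \delta_0 + \delta_2$. Continuity of $t \mapsto y[u^\star](t)$ into $H$ then supplies $\tau \in (0, T(\delta_1))$ with $\norm{y[u^\star](T(\delta_1) - \tau) - y_d} \leq \delta_0 + \delta_2$, so $(T(\delta_1) - \tau, u^\star|_{(0, T(\delta_1) - \tau)})$ is feasible for $(P_{\delta_2})$ and $T(\delta_2) \leq T(\delta_1) - \tau < T(\delta_1)$.

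The main obstacle is the strong $H$-convergence $y[u_n](T_n) \to y[u^\star](T^\star)$ inside the extraction step: the controls are defined on varying intervals $(0, T_n)$ and only converge weakly in $L^2$, so pointwise-in-time convergence of the states is not automatic. The compact embedding $V \embeddingc H$ from the standing assumptions is what bridges this gap, and one must simultaneously cope with the mismatch $T_n \neq T^\star$, for instance by working on the common interval $(0, T^\star + 1)$ and combining uniform convergence in $C([0, T^\star + 1]; H)$ with $T_n \to T^\star$.
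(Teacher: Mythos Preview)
Your proposal is correct and follows essentially the same approach as the paper: both arguments rest on weak compactness of $\Qad$, compactness of the control-to-state map into $C([0,T];H)$ to pass terminal values to the limit, and continuity of $t \mapsto y[u](t)$ in $H$ to obtain the strict inequality. The only cosmetic differences are that the paper invokes existence of optimal controls directly (from \cref{prop:existence}) rather than near-optimal/minimizing sequences, and argues right-continuity by contradiction rather than directly, while you package the compactness step as a reusable extraction lemma; the mathematical content is the same.
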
	
\begin{proof}
	\textit{Step 1: $T$ is strictly decreasing.} Clearly, $T$ is monotonically decreasing. To show strict monotonicity, let $\delta_1 > \delta_2 \geq 0$. We have to show $T(\delta_1) < T(\delta_2)$. Suppose $T(\delta_1) = T(\delta_2)$ and let $(T(\delta_i),u_i) \in \Rplus\times\Qad(0,T(\delta_i))$ be
	optimal solutions to $(P_{\delta_i})$, $i = 1,2$. Since
	\[
	\norm{y[u_2](T(\delta_2)) - y_d} - \delta_0 = \delta_2 < \delta_1,
	\]
	we infer that $(T(\delta_2),u_2)$ is also feasible for $(P_{\delta_1})$. 
	Note that in the problem formulation we can equivalently use $\norm{y[u](T)-y_d} \leq \delta$ and $\norm{y[u](T)-y_d} = \delta$.
	From continuity of $y[u_2] \colon [0,T(\delta_2)] \to H$ and $T(\delta_1) = T(\delta_2)$
	we deduce that $(T(\delta_1),u_1)$ cannot be optimal for the time-optimal problem $(P_{\delta_1})$.
	This contradicts the assumption and we conclude $T(\delta_1) < T(\delta_2)$.
	
	\textit{Step 2: $T$ is right-continuous.} Consider a sequence $\delta_1 \geq \delta_2 \geq \ldots \geq \delta_n \to \delta$. We have to show $\lim_{n\to\infty}T(\delta_n) = T(\delta)$.
	Assume that $\lim_{n\to\infty}T(\delta_n) \neq T(\delta)$. Then, due to monotonicity of $T$, there is $\varepsilon > 0$ such that
	\[
	\lim_{n\to\infty}T(\delta_n) = T(\delta) - \varepsilon.
	\]
	Let $u_n = u_n(\delta_n,T(\delta_n)) \in \Qad(0,T(\delta_n))$ denote an optimal control to $(P_{\delta_n})$.
	We can extend each $u_n$ to the time-interval $(0,T(\delta))$ so that $u_n \in \Qad(0,T(\delta))$ for all $n \in \N$. Due to boundedness of $\Qad(0, T(\delta))$, there is a subsequence denoted in the same way such that $u_n \rightharpoonup u$ in $L^s((0,T(\delta))\times\omega)$ with $u \in \Qad(0,T(\delta))$ and some $s > 2$. 
	Now, continuity of $y[u] \colon [0,T(\delta)] \to H$ and
	the triangle inequality imply
	\begin{align*}
	\lim_{n \to \infty} \norm{y[u_n](T(\delta_n)) - y_d}
	&\geq \lim_{n \to \infty} \norm{y[u](T(\delta_n))-y_d}
	- \lim_{n \to \infty}\norm{y[u](T(\delta_n))- y[u_n](T(\delta_n))}\\
	&\geq \lim_{n \to \infty} \norm{y[u](T(\delta_n))-y_d} - \lim_{n \to \infty}\sup_{t \in [0,T(\delta)]}\norm{y[u](t)- y[u_n](t)}\\
	&= \lim_{n \to \infty} \norm{y[u](T(\delta_n))-y_d},
	\end{align*}
	where in the last step we have used compactness of the control-to-state mapping 
	from $\R\times L^s((0,T(\delta))\times\omega)$ to $C([0,T(\delta)]; H)$;
	see \cite[Proposition~A.19]{Bonifacius2018}.
	Therefore, 
	\[
	\delta + \delta_0 = \lim_{n \to \infty} \delta_n  + \delta_0 
	= \lim_{n \to \infty} \norm{y[u_n](T(\delta_n))-y_d}
	\geq \norm{y[u](T(\delta)-\varepsilon)-y_d}.
	\]
	Thus, $(T(\delta)-\varepsilon, u)$ is admissible for $(P_\delta)$, contradicting optimality of $T(\delta)$. 
\end{proof}

\begin{proposition}\label{prop:distanceoptimal_delta_continuous}
	Let $T(\cdot)$ be left-continuous. Then $\delta \colon [0,T(0)] \to [0,\infty)$ is continuous and strictly monotonically decreasing.
	Moreover,
	\begin{equation}\label{eq:topt_dopt_equiv_1}
	T(\delta(T')) = T'\quad\text{for all~} T' \in [0,T(0)]
	\end{equation}
	and
	\begin{equation}\label{eq:topt_dopt_equiv_2}
	\delta(T(\delta')) = \delta'\quad\text{for all~} \delta' \in [0,\delta^\bullet].
	\end{equation}
\end{proposition}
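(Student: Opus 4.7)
The strategy is to identify $\delta$ as the inverse of $T$ and derive both continuity and monotonicity from the corresponding properties of $T$. By \cref{prop:T_right_continuous} and the left-continuity hypothesis, $T$ is continuous and strictly decreasing on its domain. What is needed is to establish the two identities, which simultaneously pin down the domain of $\delta$ and the boundary values.

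For the interior, \cref{thm:equivalence_topt_dopt} applies directly. Given $T'\in(0,T(0)]$, a distance-optimal control for $(\delta_{T'})$ exists (as noted before this proposition, because $\Qad(0,T')$ is bounded weakly closed and the cost is weakly lower semicontinuous), and the first part of the theorem yields $T(\delta(T'))=T'$. Given $\delta'\in[0,\delta^\bullet)$, a time-optimal pair $(T,u)$ for $(P_{\delta'})$ exists (see \cref{sec:time_optimal_control}); the terminal constraint must be active at the optimum (otherwise a small decrease of $T$ preserves feasibility by continuity of $y[u]$, contradicting optimality), so the second part of the theorem gives $\delta(T(\delta'))=\delta'$.

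The boundary cases hinge on continuity of $\delta$ at $T'=0$. Using that $B\colon L^2(\omega)\to V^*$ is continuous and the $L^\infty$-bound on $\Qad$, a standard parabolic estimate provides $\norm{y[u](T')-\e^{-T'A}y_0} \leq C\sqrt{T'}$ uniformly in $u\in\Qad(0,T')$, while $\e^{-T'A}y_0 \to y_0$ in $H$. Consequently $y[u](T')\to y_0$ uniformly in $u$, so $\delta(T')+\delta_0=\inf_u\norm{y[u](T')-y_d}$ converges to $\norm{y_0-y_d}$, giving $\delta(T')\to\delta^\bullet=\delta(0)$ as $T'\to 0^+$. Applying the left-continuity assumption on $T$ along the sequence $\delta(T')\uparrow\delta^\bullet$ then yields $T(\delta^\bullet)=\lim_{T'\to 0^+}T(\delta(T'))=\lim_{T'\to 0^+}T'=0$, closing \eqref{eq:topt_dopt_equiv_1} at $T'=0$ and \eqref{eq:topt_dopt_equiv_2} at $\delta'=\delta^\bullet$.

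Strict monotonicity of $\delta$ on $[0,T(0)]$ is then immediate: for $T_1<T_2$ the assumption $\delta(T_1)\leq\delta(T_2)$ combined with strict monotonicity of $T$ would force $T_1=T(\delta(T_1))\geq T(\delta(T_2))=T_2$ via \eqref{eq:topt_dopt_equiv_1}, a contradiction. Continuity of $\delta$ on $(0,T(0)]$ is automatic because there $\delta=T^{-1}$ and the inverse of a continuous strictly monotonic function is continuous. The technically delicate step is the boundary analysis leading to $\delta(T')\to\delta^\bullet$ and $T(\delta^\bullet)=0$; everything else is a direct consequence of \cref{thm:equivalence_topt_dopt} and \cref{prop:T_right_continuous}.
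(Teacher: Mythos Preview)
Your argument is circular. You invoke \cref{thm:equivalence_topt_dopt} to establish the identities \eqref{eq:topt_dopt_equiv_1} and \eqref{eq:topt_dopt_equiv_2}, but in the paper the proof of \cref{thm:equivalence_topt_dopt} is deferred until after \cref{prop:distanceoptimal_delta_continuous} precisely because it \emph{uses} those identities. Indeed, the paper's proof of the theorem reads ``Due to \eqref{eq:topt_dopt_equiv_1} we have $T(\delta(T)) = T$'' and ``Using \eqref{eq:topt_dopt_equiv_2} we infer that\ldots''. So the theorem cannot be taken as input here; the identities must be proved directly.

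The paper's direct argument is short and you already have all the ingredients for it. From right-continuity (\cref{prop:T_right_continuous}) and the left-continuity hypothesis, $T$ is continuous and strictly decreasing, hence $T^{-1}$ is defined on all of $[0,T(0)]$. For \eqref{eq:topt_dopt_equiv_1}: any distance-optimal $u$ at time $T'$ is feasible for $(P_{\delta(T')})$, so $T(\delta(T'))\le T'$; if the inequality were strict, continuity of $T$ would give some $\delta'<\delta(T')$ with $T(\delta')=T'$, and the corresponding time-optimal control $u'$ would satisfy $\delta'<\delta(T')\le\norm{y[u'](T')-y_d}-\delta_0\le\delta'$, a contradiction. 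Identity \eqref{eq:topt_dopt_equiv_2} then follows from \eqref{eq:topt_dopt_equiv_1} by applying $T$ and using injectivity. With $\delta=T^{-1}$ established, continuity and strict monotonicity are immediate, and your separate boundary analysis becomes unnecessary.
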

\begin{proof}
	First, since $T$ is strictly decreasing, its inverse $T^{-1}$ is continuous. 
	Moreover, as $T$ is right-continuous according to \cref{prop:T_right_continuous},
	the assumption implies that $T$ is continuous.
	Hence, $T^{-1}$ is defined everywhere on $[0, T(0)]$; see, e.g., \cite[Theorem~III.5.7]{Amann2005a}.
	
	Let $T > 0$. Then there exists $u \in \Qad(0,T)$ such that $\norm{y[u](T)-y_d} - \delta_0 = \delta(T)$.
	Hence, $T(\delta(T)) \leq T$ holds. 
	Suppose that $T(\delta(T)) < T$. Then by continuity of $T$ there exists $\delta' < \delta(T)$ such that $T(\delta') = T$. Let $u' \in \Qad(0,T)$ be an optimal control to $(P_{\delta'})$. Then
	\[
	\delta' < \delta(T) \leq \norm{y[u'](T)-y_d}  - \delta_0 \leq \delta',
	\]
	a contradiction, which proves~\eqref{eq:topt_dopt_equiv_1}.
	
	Moreover, \eqref{eq:topt_dopt_equiv_1} implies that $T(\delta(T(\delta'))) = T(\delta')$ for all $\delta' \in [0,\delta^\bullet]$. Strict monotonicity of $T$ therefore yields \eqref{eq:topt_dopt_equiv_2}.
	For these reasons, $\delta = T^{-1}$ and we conclude that $\delta$ is continuous and strictly monotonically decreasing.
\end{proof}
After this preparation we can now prove the equivalence of time and distance optimal controls. 
\begin{proof}[Proof of \cref{thm:equivalence_topt_dopt}]
	Let $T > 0$ and $u \in \Qad(0,T)$ be distance-optimal for~\eqref{DOPT}, i.e.\ $\delta(T) = \norm{y[u](T)-y_d} - \delta_0$.
	Due to \eqref{eq:topt_dopt_equiv_1} we have $T(\delta(T)) = T$. Thus, $(T,u)$ is also time-optimal for $(P_{\delta(T)})$.
	
	Conversely, let $\delta \geq 0$ and $(T,u) \in \Rplus\times\Qad(0,T)$ be time-optimal for~\eqref{TOPT}. In particular, 
	this gives $\norm{y[u](T)-y_d} - \delta_0 = \delta$. Using \eqref{eq:topt_dopt_equiv_2} we infer that
	\[
	\delta(T(\delta)) = \delta = \norm{y[u](T)- y_d} - \delta_0,
	\]
	i.e.\ $u$ is also distance-optimal for $(\delta_T)$.
\end{proof}

Since monotone functions have at most countably many discontinuities, 
see, e.g.,  \cite[Proposition~III.5.6]{Amann2005a}, 
it is unlikely that we accidentally hit a point where $T$ is not left-continuous.
However, for the algorithm to be presented later we are interested in 
continuity of the value function
in a neighborhood of the optimal value.
To this end, we state two sufficient conditions.
Note that the second condition even guarantees 
Lipschitz continuity from the left of the value function $T(\cdot)$. 
The setting considered in \cite{Wang2012a} for example
automatically satisfies the assumptions of \cref{prop:T_Lipschitzfromleft_sufficient},
since $y_d = 0$ and $0 \in \Qad$.
\begin{proposition}
	\label{prop:T_continuousfromleft_sufficient}
	Let the following controllability condition hold:
	For all $\delta \in (0,\delta^\bullet]$
	there exists $T' > 0$ 
	and a control $u \in \Qad(0,T')$
	such that the trajectory $y[u, y^\delta(T(\delta))]$ with initial value $y^\delta(T(\delta))$ satisfies
	\[
	\norm{y[u, y^\delta(T(\delta))](t) - y_d} < \delta_0 + \delta \quad\text{for all~} t \in (0,T'],
	\]
	where $y^\delta \in W(0,T(\delta))$ 
	denotes an optimal trajectory for \eqref{TOPT}.
	Then, $T$ is left-continuous.
\end{proposition}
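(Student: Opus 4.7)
The plan is to verify left-continuity at an arbitrary point $\delta \in (0,\delta^\bullet]$ by testing against a sequence $\delta_n \nearrow \delta$. By strict monotonicity of $T(\cdot)$ (\cref{prop:T_right_continuous}), one has $T(\delta_n) \geq T(\delta)$ and the sequence $T(\delta_n)$ is non-increasing, so it converges to some $L \geq T(\delta)$. It therefore suffices to establish $L \leq T(\delta)$.

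To this end, I would exploit the controllability hypothesis to prolong an optimal trajectory of~\eqref{TOPT} slightly into the interior of the constraint ball, thereby absorbing the shrinking gap $\delta - \delta_n$. Let $u^\delta \in \Qad(0,T(\delta))$ be an optimal control giving rise to $y^\delta$, and let $T' > 0$ and $u^\star \in \Qad(0,T')$ be as furnished by the hypothesis. For arbitrary $t^\star \in (0,T']$ the strict inequality provides
\[
\eta \ldef \delta_0 + \delta - \norm{y[u^\star, y^\delta(T(\delta))](t^\star) - y_d} > 0.
\]
Choosing $n_0$ so that $\delta - \delta_n < \eta$ for all $n \geq n_0$, one can form the concatenated control
\[
\tilde u_n(t) \ldef \begin{cases} u^\delta(t), & t \in (0,T(\delta)],\\ u^\star(t - T(\delta)), & t \in (T(\delta), T(\delta) + t^\star], \end{cases}
\]
which lies in $\Qad(0, T(\delta) + t^\star)$. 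By uniqueness of solutions to the state equation the pasted flow equals $y[u^\star, y^\delta(T(\delta))](t^\star)$ at the terminal time, so that
\[
\norm{y[\tilde u_n](T(\delta) + t^\star) - y_d} \leq \delta_0 + \delta - \eta \leq \delta_0 + \delta_n,
\]
and consequently $(T(\delta) + t^\star, \tilde u_n)$ is admissible for $(P_{\delta_n})$. This yields $T(\delta_n) \leq T(\delta) + t^\star$ whenever $n \geq n_0$.

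Passing to the limit $n \to \infty$ gives $L \leq T(\delta) + t^\star$; since $t^\star \in (0, T']$ was arbitrary, sending $t^\star \to 0^+$ delivers $L \leq T(\delta)$, closing the argument. The main obstacle I anticipate is the concatenation step — verifying that $\tilde u_n$ is genuinely admissible on the enlarged interval and that the piecewise flow coincides with the unique solution $y[\tilde u_n]$ — but this is a routine consequence of the linear semigroup formulation together with the fact that $\Qad$ is a pointwise constraint. Existence of the optimal trajectory $y^\delta$ is implicit in the statement of the hypothesis and will be taken for granted here.
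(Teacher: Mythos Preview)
Your proof is correct and follows essentially the same approach as the paper: concatenate an optimal control for $(P_\delta)$ with the control furnished by the controllability hypothesis to obtain feasibility for $(P_{\delta_n})$ at the cost of a small additional time. The only cosmetic difference is that the paper invokes the intermediate value theorem to locate an exact $t_n \in (0,T']$ with $\norm{y[u'](T(\delta)+t_n)-y_d}-\delta_0 = \delta - \delta_n$ and then argues $t_n \to 0$, whereas you fix $t^\star$ first and take a two-step limit; the two arguments are interchangeable.
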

\begin{proof}
	Let $\delta > 0$ and
	let $u \in \Qad(0, T(\delta))$ be an optimal control to~\eqref{TOPT}.
	According to the controllability assumption
	there exists $T' > 0$ and an extended control $u' \in \Qad(0, T(\delta) + T')$,
	i.e.\ $u' = u$ on $(0,T(\delta))$,
	such that
	\[
	\norm{y[u', y_0](T(\delta)+t) - y_d} < \delta_0 + \delta
	\quad\text{for all~}t \in (0,T'].
	\]
	Set $f(t) \ldef \norm{y[u'](T(\delta)+t) - y_d} - \delta_0$.
	Since $f$ is continuous, $f(0) = \delta$, and $f(t) < \delta$ for $t \in (0,T']$, 
	for all $\delta_n > 0$ sufficiently small 
	there exists $t_n \in (0,T']$ such that
	$f(t_n) = \delta - \delta_n$.
	Hence, $(T(\delta) + t_n, u')$ is feasible for $(P_{\delta-\delta_n})$
	and we have
	\[
	T(\delta - \delta_n) \leq T(\delta) + t_n.
	\]
	Moreover, if $\delta_n \to 0$ 
	then $t_n \to 0$ due to $f(t) < \delta$ for $t \in (0,T']$.
	Passing to the limit $\delta_n \to 0$
	yields the result.		
\end{proof}

\begin{proposition}
	\label{prop:T_Lipschitzfromleft_sufficient}
	Let $y_d \in V$ and assume that G\aa rding's inequality~\eqref{eq:A_Garding} 
	holds with \(\omega_0 = 0\).
	If there exists a control $\breve{u}\in \Qad$
	such that $\norm{B\breve{u}-Ay_d}_{V^*} < \alpha_0\delta_0$,
	then $T \colon [0,\delta^\bullet] \to \R$ is Lipschitz continuous from the left.
\end{proposition}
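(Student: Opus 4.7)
Set $\eta := \alpha_0\delta_0 - \|B\breve u - Ay_d\|_{V^*} > 0$. Gårding's inequality with $\omega_0 = 0$ makes $A\colon V \to V^*$ an isomorphism with $\|A^{-1}\|_{V^*\to V} \leq 1/\alpha_0$, so the equilibrium $z^\star := A^{-1}(B\breve u - Ay_d) \in V$ satisfies $\|z^\star\|_V < \delta_0$: constant-in-time application of $\breve u$ generates a dissipative flow that contracts $\|y(t)-y_d\|$ whenever this distance exceeds $\delta_0$. The plan is to show the contraction rate is uniformly at least $\eta$, so that concatenating any time-optimal control for~\eqref{TOPT} with $\breve u$ yields feasibility for $(P_{\delta-\delta_n})$ after only $\delta_n/\eta$ additional time, producing Lipschitz constant $1/\eta$.

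\textbf{Construction and energy estimate.} Fix $\delta \in (0,\delta^\bullet]$ with time-optimal pair $(T(\delta), u^\delta)$ and state $y^\delta$, so $\|y^\delta(T(\delta)) - y_d\| = \delta_0 + \delta$ by optimality. For $\tau > 0$ define the extension
\[
u'(t) := u^\delta(t)\text{ on } (0,T(\delta)), \qquad u'(t) := \breve u\text{ on } (T(\delta), T(\delta)+\tau],
\]
which lies in $\Qad(0,T(\delta)+\tau)$. Set $z(s) := y[u'](T(\delta)+s) - y_d$ and $w := B\breve u - Ay_d \in V^*$ (well-defined as $y_d \in V$); then $z \in W(0,\tau)$ solves $\partial_s z + Az = w$ with $\|z(0)\| = \delta_0+\delta$. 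The chain rule for $W(0,\tau)$-functions together with Gårding at $\omega_0 = 0$ yields
\begin{equation*}
\tfrac{1}{2}\tfrac{d}{ds}\|z(s)\|^2 = -a(z,z) + \pair{w,z} \leq -\|z\|_V \bigl(\alpha_0\|z\|_V - \|w\|_{V^*}\bigr) \quad\text{a.e.\ on } (0,\tau).
\end{equation*}

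\textbf{Conclusion.} Under the standard Gelfand-triple normalization $\|\cdot\| \leq \|\cdot\|_V$ on $V$, whenever $\|z(s)\| \geq \delta_0$ one has $\|z(s)\|_V \geq \delta_0$ and therefore $\alpha_0\|z(s)\|_V - \|w\|_{V^*} \geq \eta$, so the above simplifies to $\frac{d}{ds}\|z(s)\| \leq -\eta$. A continuity/maximal-interval argument starting from $\|z(0)\| = \delta_0 + \delta > \delta_0$ shows this bound persists at least on $[0,\delta/\eta]$, yielding $\|z(s)\| \leq \delta_0 + \delta - \eta s$ on that interval. For any $\delta_n \in (0,\delta)$ and $t_n := \delta_n/\eta$, the pair $(T(\delta)+t_n, u')$ is therefore admissible for $(P_{\delta-\delta_n})$, and
\[
0 \leq T(\delta - \delta_n) - T(\delta) \leq \tfrac{\delta_n}{\eta},
\]
which is exactly left-Lipschitz continuity with constant $1/\eta$ uniform in $\delta$. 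The main technical subtlety lies in the careful passage from the $\|\cdot\|^2$-chain rule to a pointwise bound on $\frac{d}{ds}\|z(s)\|$ (which formally requires dividing by $\|z\|$) and in justifying the continuity-based control of the window on which $\|z(\cdot)\| \geq \delta_0$; both are handled by standard density and maximality arguments.
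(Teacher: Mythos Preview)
Your proof follows the same strategy as the paper's: concatenate an optimal control for $(P_\delta)$ with the constant control $\breve u$, show that the distance to $y_d$ then decays linearly at rate at least $h_0 = \alpha_0\delta_0 - \|B\breve u - Ay_d\|_{V^*}$, and read off the Lipschitz bound $T(\delta') \le T(\delta) + (\delta-\delta')/h_0$. The paper outsources the decay estimate to \cite[Lemma~3.9 and Proposition~5.3]{Bonifacius2017}, whereas you prove it directly via the energy identity, which makes your argument more self-contained.

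One point deserves comment: your passage from $\tfrac12\tfrac{d}{ds}\|z\|^2 \le -\|z\|_V(\alpha_0\|z\|_V - \|w\|_{V^*})$ to $\tfrac{d}{ds}\|z\| \le -\eta$ hinges on the inequality $\|z\|_H \le \|z\|_V$, which you invoke as the ``standard Gelfand-triple normalization''. The paper does not state this normalization explicitly; if the embedding $V\hookrightarrow H$ only satisfies $\|\cdot\|_H \le C\|\cdot\|_V$ with $C>1$, your argument would require the stronger hypothesis $\|w\|_{V^*} < \alpha_0\delta_0/C$. Since this normalization can always be arranged by rescaling $\|\cdot\|_V$ (at the cost of redefining $\alpha_0$, hence the hypothesis on $\breve u$), and since the cited reference \cite{Bonifacius2017} presumably fixes the same convention, this is not a genuine gap---but it would be cleaner to state it as a standing assumption rather than slip it in mid-proof.
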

\begin{proof}
	We argue similarly as in \cite[Theorem~4.5]{Bonifacius2017}.
	First, the assumptions of \cref{prop:T_Lipschitzfromleft_sufficient} ensure that \cite[(3.3)]{Bonifacius2017}
	holds with $h_0 = \alpha_0\delta_0 - \norm{B\breve{u}-Ay_d}_{V^*} > 0$;
	see the proof of \cite[Proposition~5.3]{Bonifacius2017}.
	
	Let $\delta \in [0,\delta^\bullet]$, $\delta' \in [0, \delta]$, and
	$(T,u,y)$ be a solution to problem $(P_{\delta})$.
	Consider the auxiliary problem \(\partial_t \breve{y} + A
	\breve{y} = B \breve{u}\) with initial condition \(\breve{y}(0) = y(T)\)
	and an auxiliary control \(\breve{u}\colon [0,\infty) \to \Qad\). Employing \cite[Lemma~3.9]{Bonifacius2017}
	we can choose \(\breve{u}\) such that
	\[
	\max\set{0, \norm{\breve{y}(t) - y_d} - \delta_0} \leq \max\set{0,\;\delta - h_0 t}
	\quad\text{for~} t\geq 0
	\]
	holds, 
	since
	\(\dist{U}{y} = \max\set{0, \norm{y - y_d} - \delta_0}\),
	where $\dist{U}{\cdot}$ denotes the distance function to the set $U = \unitball[\delta_0]{y_d}$.
	Choose
	\(\tau = (\delta - \delta')/h_0\).
	Then, \(u \in \Qad(0,T+\tau)\) defined by
	\begin{align*}
	u(t) &= \begin{cases}
	u(t) &\text{if~} t \leq T\mbox{,}\\
	\breve{u}(t-T) &\text{if~} t > T\mbox{,}
	\end{cases}
	\end{align*}
	is admissible for $(P_{\delta'})$ and we find
	\begin{equation*}
	T(\delta') \leq T(\delta) + \tau 
	= T(\delta) + (\delta - \delta')/h_0
	\end{equation*}
	concluding the proof.
\end{proof}

Instead of solving the 
time-optimal control problem, 
we can equivalently search for a root of the value function $\delta(\cdot)$
by virtue of \cref{thm:equivalence_topt_dopt}.
However, this still might be a difficult task, as $\delta(\cdot)$ can have 
several roots and we do not know the approximate region of the root we are looking for. 
If in addition the target set $\unitball[y_d]{\delta_0}$ is weakly invariant under $(A,B\Qad)$,
i.e.\ for every $y_0 \in \unitball[y_d]{\delta_0}$ there exists a control $u \colon
[0,\infty) \rightarrow \Qad$ such that the solution $y$ to
\begin{equation*}
\partial_t y + Ay = Bu\mbox{,}\quad y(0) = y_0\mbox{,}
\end{equation*}
satisfies $y(t) \in \unitball[y_d]{\delta_0}$ for all $t \geq 0$,
then there is only one root where $\delta(\cdot)$ changes from
a strictly positive value to a nonpositive value.
We also refer to \cite[Theorem~3.8]{Bonifacius2017}
for a characterization of weak invariance.
\begin{proposition}
	Let $T \in \Rplus$ such that $\delta(T) = 0$.
	Suppose that the target set $\unitball[y_d]{\delta_0}$ is weakly invariant under $(A,B\Qad)$.		
	Then, $\delta(T + t) \leq 0$ for all $t \in \Rplus$.
\end{proposition}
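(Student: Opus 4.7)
The plan is to use \cref{thm:equivalence_topt_dopt}-style concatenation together with weak invariance. Since $\delta(T)=0$, I would first invoke well-posedness of the convex minimal distance problem $(\delta_T)$ (bounded, convex $\Qad(0,T)$, affine control-to-state map, weakly lower semicontinuous norm) to obtain a distance-optimal control $\bar{u}\in\Qad(0,T)$ with
\[
\norm{y[\bar{u}](T) - y_d} - \delta_0 = \delta(T) = 0,
\]
so that the terminal state $y_1 \ldef y[\bar{u}](T)$ lies in the target ball $\unitball[y_d]{\delta_0}$.

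Next I would apply the weak invariance hypothesis at the initial value $y_1$: there exists a control $u^\star\colon[0,\infty)\to\Qad$ such that the trajectory $\tilde{y}$ solving $\partial_t\tilde{y}+A\tilde{y}=Bu^\star$ with $\tilde{y}(0)=y_1$ satisfies $\tilde{y}(s)\in\unitball[y_d]{\delta_0}$ for every $s\geq 0$.

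Given $t\in\R_+$, I would then concatenate the two controls by defining $u_t\in\Qad(0,T+t)$ via
\[
u_t(s) \ldef \begin{cases} \bar{u}(s), & s\in(0,T],\\ u^\star(s-T), & s\in(T,T+t].\end{cases}
\]
Because $\bar{u}(s),u^\star(s-T)\in\Qad$ pointwise and both pieces are square integrable, $u_t\in\Qad(0,T+t)$. By uniqueness of the state equation, the corresponding trajectory coincides with $y[\bar{u}]$ on $[0,T]$ and, shifted in time, with $\tilde{y}$ on $[T,T+t]$; in particular $y[u_t](T+t)=\tilde{y}(t)\in\unitball[y_d]{\delta_0}$. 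Hence
\[
\delta(T+t) \leq \norm{y[u_t](T+t)-y_d} - \delta_0 \leq 0.
\]

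No step looks genuinely hard: the only point requiring care is verifying that the spliced control really produces the claimed state, which reduces to the semigroup/variation-of-constants representation and the initial value identity $\tilde{y}(0)=y[\bar{u}](T)$. Existence of the distance-optimal $\bar{u}$ should be stated explicitly (or, if one prefers to avoid invoking existence, one can apply the same argument to a minimizing sequence $\bar{u}_n$ with $\norm{y[\bar u_n](T)-y_d}-\delta_0\to 0$ and use continuity of the state with respect to the initial datum to conclude $\delta(T+t)\leq 0$ in the limit; but since $(\delta_T)$ is well-posed the direct route suffices).
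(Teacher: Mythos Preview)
Your proof is correct and is precisely the argument the paper has in mind; the paper's own proof consists of the single sentence ``This immediately follows from the definition of weak invariance,'' and your concatenation of a distance-optimal control with an invariance-preserving control is the natural way to unpack that sentence.
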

\begin{proof}
	This immediately follows from the definition of weak invariance.
\end{proof}
Hence, if $T(\cdot)$ is continuous from the left 
and the target set $\unitball[y_d]{\delta_0}$ is weakly invariant under $(A,B\Qad)$,
then an iterative procedure is able to find the global optimal solution to 
the time-optimal control problem, provided that the initial value $T_0$
for the minimization of $\delta(\cdot)$
satisfies $\delta(T_0) > 0$. If the latter condition is violated, 
then the procedure has to be restarted with a smaller initial value.
Repeating the steps above will lead to an optimal solution.

\section{The time-optimal control problem}
\label{sec:time_optimal_control}

We introduce a change of variables
to discuss first order necessary optimality conditions for~\eqref{P}.
In particular, we consider optimality conditions in qualified form 
that will be essential for the Newton method (introduced later) to be well-defined.

\subsection{Change of variables}
\label{subsec:change_of_variables}
In order to deal with the variable time horizon of~\eqref{P},
we transform the state equation to the
fixed reference time interval $(0,1)$. 
For $\nu \in \Rplus$ we set $T_\nu(t) = \nu t$ and obtain the transformed state equation
\begin{equation*}
\partial_t y + \nu Ay = \nu \ControlOp u\mbox{,}\quad
y(0) = y_0\mbox{.}
\end{equation*}
We generally abbreviate $I = (0,1)$.
G\aa rding's inequality guarantees that for each pair $(\nu,u) \in \Rplus\times \Q(0,1)$
there exists a unique solution $y \in W(I)$ to the transformed state equation;
see, e.g.,~\cite[Theorem~2, Chapter XVIII, \S3]{Dautray1992}.
Hence, it is justified to introduce the control-to-state mapping 
$S \colon \Rplus\times\Qad(0,1) \rightarrow W(I)$ with
$(\nu,u)\mapsto y = S(\nu,u)$.
The transformed optimal control problem reads as
\begin{equation}\label{Pt}\tag{$\hat{P}$}
\mbox{Minimize~} \nu \mbox{~subject to~}
\left\{
\begin{aligned}
&(\nu,u) \in \Rplus\times\Qad(0,1),\\
&\norm{i_1S(\nu,u) - y_d} \leq \delta_0.
\end{aligned}
\right.
\end{equation}
We emphasize that the problem~\eqref{P} and the transformed 
problem~\eqref{Pt} are equivalent; see \cite[Proposition~4.6]{Bonifacius2017}.

Since there exists at least one feasible control due to
\cref{assumption:existence_feasible_control}, well-posedness of~\eqref{Pt} 
is obtained by the direct method; cf., e.g., \cite[Proposition~4.1]{Bonifacius2017}. 
We note that the optimal solution must fulfill the terminal constraint 
with equality (otherwise, a control with a
shorter time is admissible, while having a smaller objective value).
\begin{proposition}\label{prop:existence}
	Problem~\eqref{Pt} admits a solution $(\bar{\nu}, \bar{u}) \in \Rplus\times\Qad(0,1)$ with
	associated state \(\bar{y} = S(\bar{\nu},\bar{u})\).
	Moreover, \(\norm{\bar{y}(1) - y_d} = \delta_0\) holds.
\end{proposition}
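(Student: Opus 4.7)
The plan is to apply the direct method of the calculus of variations, then obtain the terminal equality by a perturbation argument.

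\textbf{Existence via the direct method.} By \cref{assumption:existence_feasible_control}, there is $T > 0$ and $u \in \Qad(0,T)$ with $\norm{y[u](T) - y_d} \leq \delta_0$. Rescaling to the reference interval $I = (0,1)$ via $t \mapsto t/T$ yields a feasible pair $(T, \tilde{u}) \in \Rplus \times \Qad(0,1)$ for \eqref{Pt}, so the infimum $\bar{\nu}_\star \ldef \inf\eqref{Pt}$ is finite and nonnegative. I would then take a minimizing sequence $(\nu_n, u_n)$ with $\nu_n \to \bar{\nu} \ldef \bar{\nu}_\star$. Since $u_n \in \Qad(0,1) \subset L^\infty(I \times \omega)$ is uniformly bounded, the $u_n$ are bounded in $L^s(I\times\omega)$ for every $s \in (2,\infty)$. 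After extracting subsequences one obtains $u_n \rightharpoonup \bar{u}$ in some $L^s$ with $s > 2$, and $\bar{u} \in \Qad(0,1)$ by convexity and closedness of the admissible set.

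\textbf{Passing to the limit in the constraint.} Here the crucial input is the compactness of the control-to-state mapping $S \colon \R \times L^s(I\times\omega) \to C([0,1]; H)$, which is precisely the result from \cite[Proposition~A.19]{Bonifacius2018} already invoked in the proof of \cref{prop:T_right_continuous}. Applied to the bounded sequence $(\nu_n, u_n)$, this gives $S(\nu_n,u_n) \to S(\bar{\nu},\bar{u})$ in $C([0,1]; H)$; in particular $i_1 S(\nu_n,u_n) \to i_1 S(\bar{\nu},\bar{u})$ in $H$. Passing to the limit in $\norm{i_1 S(\nu_n,u_n) - y_d} \leq \delta_0$ shows $(\bar{\nu},\bar{u})$ is feasible, and since the objective is just $\nu$, continuity along the sequence gives $\bar{\nu} = \bar{\nu}_\star$, proving existence. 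Note also $\bar{\nu} > 0$, because $\nu = 0$ would give $y(1) = y_0$ and then $\norm{y_0 - y_d} > \delta_0$ from \cref{assumption:existence_feasible_control} would rule out feasibility.

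\textbf{Saturation of the terminal constraint.} Suppose for contradiction that $\norm{\bar{y}(1) - y_d} < \delta_0$. By the continuity of the mapping $\nu \mapsto i_1 S(\nu,\bar{u})$ from $\Rplus$ to $H$ (a direct consequence of well-posedness of the transformed state equation, or equivalently of the continuity of the trajectory $t \mapsto y(t)$ on the original interval), there is some $\nu' < \bar{\nu}$ such that $\norm{i_1 S(\nu',\bar{u}) - y_d} \leq \delta_0$ still holds. But then $(\nu',\bar{u})$ is feasible for \eqref{Pt} with strictly smaller objective value, contradicting optimality of $\bar{\nu}$. Hence the terminal distance must equal $\delta_0$.

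\textbf{Expected obstacle.} The only nontrivial step is passing to the limit in the terminal constraint, for which the compactness of the control-to-state map is essential—weak convergence of $u_n$ alone would only give weak convergence of $y_n(1)$ and hence lower semicontinuity of the norm, which is already enough for the feasibility conclusion, but the compactness cited above makes the argument clean and handles the joint dependence on $\nu_n$ and $u_n$ simultaneously.
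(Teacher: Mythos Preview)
Your proposal is correct and follows essentially the same approach as the paper: existence by the direct method (the paper merely cites \cite[Proposition~4.1]{Bonifacius2017} without spelling out the details you give), and saturation of the terminal constraint by the same continuity-in-$\nu$ perturbation argument. Your treatment is in fact more detailed than the paper's sketch, and your remark that weak lower semicontinuity of the norm would already suffice for feasibility (without invoking compactness) is a nice observation.
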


\subsection{First order optimality conditions}
Next, we derive general necessary optimality conditions.	
\begin{lemma}\label{lemma:first_order_optimality}
	Let $(\bar{\nu},\bar{u}) \in \Rplus\times\Qad(0,1)$ be a solution to~\eqref{Pt}.
	Then there exist $\bar{\mu} > 0$ and $\bar{\mu}_0 \in\set{0,1}$
	such that
	\begin{align}
	\int_{0}^{1} \pair{\ControlOp\bar{u}(t) - A\bar{y}(t), \bar{p}(t)}\D{t} &= -\bar{\mu}_0\mbox{,}\label{eq:opt_cond_hamiltonianConstant}\\
	\int_0^{1}\inner{\ControlOp^*\bar{p}(t), u(t) - \bar{u}(t)}_{L^2(\omega)}\D{t} &\geq 0 \quad \text{for all~} u \in \Qad(0,1)\mbox{,}\label{eq:opt_cond_variationalInequality}\\
	\norm{\bar{y}(1) - y_d} &= \delta_0\mbox{,}\label{eq:opt_cond_feasiblity}
	\end{align}
	where the \emph{adjoint state} $\bar{p} \in W(0,1)$ is determined by
	\begin{equation}\label{eq:adjoint_state_equation}
	-\partial_t \bar{p}(t) + \bar{\nu}A^*\bar{p}(t) = 0\mbox{,}
	\quad t \in (0,1) \quad 
	\bar{p}(1) = \bar{\mu}(\bar{y}(1) - y_d)\mbox{.}
	\end{equation}
	If $\bar{\mu}_0 = 1$, then the optimality conditions are called \emph{qualified}.
\end{lemma}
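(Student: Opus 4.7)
The plan is to apply a Lagrange / Fritz John multiplier rule to~\eqref{Pt}. I would reformulate the terminal inequality constraint in squared form,
\[
G(\nu,u) \ldef \tfrac{1}{2}\norm{i_1 S(\nu,u) - y_d}^2 - \tfrac{1}{2}\delta_0^2 \leq 0,
\]
so that $G \colon \Rplus \times \Q(0,1) \to \R$ is continuously Fréchet differentiable (the nonlinear dependence comes only from $S$). Standard parabolic theory gives that $S$ is of class $C^1$ on $\Rplus \times \Q(0,1)$, that the partial derivative $\partial_u S(\bar\nu,\bar u)\delta u \eqqcolon v$ solves $\partial_t v + \bar\nu A v = \bar\nu B\delta u$ with $v(0) = 0$, and that $\partial_\nu S(\bar\nu,\bar u)\delta\nu \eqqcolon w$ solves $\partial_t w + \bar\nu A w = \delta\nu\,(B\bar u - A\bar y)$ with $w(0) = 0$. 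By \cref{prop:existence} the constraint is active, i.e.\ $G(\bar\nu,\bar u) = 0$, which is exactly~\eqref{eq:opt_cond_feasiblity}.

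Next, I would apply the Fritz John multiplier rule for the minimization of $\nu$ over the convex admissible set $\Rplus \times \Qad(0,1)$ subject to the single smooth inequality $G(\nu,u) \leq 0$. This yields multipliers $\bar\mu_0 \geq 0$ and $\lambda \geq 0$, not both zero, such that
\[
\bar\mu_0\,\delta\nu + \lambda\,\inner{\bar y(1) - y_d,\, w(1) + v(1)} \geq 0
\]
for every feasible direction $\delta\nu \in \R$ (permissible with arbitrary sign since $\bar\nu$ lies in the interior of $\Rplus$) and every $\delta u = u - \bar u$ with $u \in \Qad(0,1)$ (since $\Qad(0,1)$ is convex).

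The remaining step is to rewrite the multiplier inequality through the adjoint state. Defining $\bar p \in W(0,1)$ as the solution of~\eqref{eq:adjoint_state_equation} with terminal value $\bar p(1) = \bar\mu\,(\bar y(1) - y_d)$, where $\bar\mu \ldef \lambda$, and testing the linearised equations for $w$ and $v$ against $\bar p$, an integration by parts (using $w(0) = v(0) = 0$ and the adjoint equation) gives
\[
\lambda\,\inner{\bar y(1) - y_d,\, w(1) + v(1)}
= \delta\nu \int_0^1 \pair{B\bar u - A\bar y,\bar p}\D{t} + \bar\nu \int_0^1 \inner{B^*\bar p, u - \bar u}_{L^2(\omega)}\D{t}.
\]
Setting $u = \bar u$ and letting $\delta\nu$ range over $\R$ forces equality in the $\nu$-direction, producing~\eqref{eq:opt_cond_hamiltonianConstant}; setting $\delta\nu = 0$ and dividing by $\bar\nu > 0$ produces the variational inequality~\eqref{eq:opt_cond_variationalInequality}.

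Finally, I would verify the asserted normalisation. If $\bar\mu = \lambda = 0$, then $\bar p \equiv 0$, so~\eqref{eq:opt_cond_hamiltonianConstant} forces $\bar\mu_0 = 0$ as well, contradicting the Fritz John non-triviality. Hence $\bar\mu > 0$, and scaling the multiplier pair by the positive constant $1/\bar\mu_0$ (when $\bar\mu_0 > 0$) gives $\bar\mu_0 \in \{0,1\}$ as claimed. The main technical point that needs care is the Fréchet differentiability of $S$ with respect to $\nu$ (the variable-horizon parameter) in the appropriate function-space topology; once this is established, the abstract multiplier rule and the adjoint calculus proceed in the standard way for parabolic control problems.
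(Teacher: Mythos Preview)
Your argument is correct and essentially equivalent in spirit to the paper's, but carried out more explicitly. The paper's proof is a two-line citation: it invokes the finite-codimension structure of the terminal set and refers to \cite[Theorem~4.13]{Bonifacius2017} for a Pontryagin-type maximum principle that directly delivers $\bar\mu>0$, $\bar\mu_0\in\{0,1\}$, and the identity $\min_{u\in\Qad(0,1)}\int_0^1\pair{Bu-A\bar y,\bar p}\D t=\int_0^1\pair{B\bar u-A\bar y,\bar p}\D t=-\bar\mu_0$, from which \eqref{eq:opt_cond_hamiltonianConstant} and \eqref{eq:opt_cond_variationalInequality} are read off. You instead square the terminal constraint to obtain a smooth scalar inequality, apply a Fritz John rule (which is unproblematic here precisely because the constraint is one-dimensional, so the finite-codimension hypothesis the paper mentions is trivially met), and then recover the same conditions via adjoint calculus and normalisation. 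Your route is more self-contained and makes the role of the $\nu$-differentiability of $S$ explicit; the paper's route is shorter but defers the substance to an external reference. One cosmetic point: in your final normalisation step you should state explicitly that when $\bar\mu_0>0$ you rescale \emph{both} multipliers by $1/\bar\mu_0$, so that the new $\bar\mu$ remains strictly positive, and when $\bar\mu_0=0$ no rescaling is needed since you have already secured $\bar\mu>0$.
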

\begin{proof}
	Since the terminal set has finite codimension,
	see, e.g.,~\cite[Definition~4.1.5]{Li1995}, 
	we can argue as in \cite[Theorem~4.13]{Bonifacius2017} to obtain
	$\bar{\mu} > 0$ and $\bar{\mu}_0 \in\set{0,1}$ such that
	\[
	\min_{u\in\Qad(0,1)}\int_{0}^{1}\pair{\ControlOp u - A\bar{y},\bar{p}}\D{t} = \int_{0}^{1}\pair{\ControlOp\bar{u} - A \bar{y},\bar{p}}\D{t} = -\bar{\mu}_0.
	\]
	Now, \eqref{eq:opt_cond_hamiltonianConstant} follows from the second equality
	and the first equality is equivalent to~\eqref{eq:opt_cond_variationalInequality}.
\end{proof}
Last, for the terminal set considered in this article, 
we cite the following criterion from \cite{Bonifacius2017}
that guarantees qualified optimality conditions.
It is worth mentioning that this condition can be checked a priori
without knowing an optimal solution.
\begin{proposition}
	\label{prop:sufficient_cond_qualified_first_order}
	Adapt the assumptions of \cref{prop:T_Lipschitzfromleft_sufficient}.
	Then qualified optimality conditions hold.
\end{proposition}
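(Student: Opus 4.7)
The plan is to argue by contradiction, ruling out the abnormal case $\bar\mu_0 = 0$. Under that assumption, the Pontryagin minimum in \cref{lemma:first_order_optimality} collapses to
\[
\int_0^1 \pair{\ControlOp u - A\bar y,\bar p}\D{t} \ge 0 \quad\text{for all~}u\in\Qad(0,1),
\]
with equality at $\bar u$, and the contradiction will arise by exhibiting a feasible control that makes this integral strictly negative, thereby forcing the minimum to be $-1$ rather than $0$.

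The natural candidate is the time-constant control $u\equiv\breve u$ supplied by the assumption. I would split the integrand as $\ControlOp\breve u - A\bar y = (\ControlOp\breve u - Ay_d) + A(y_d - \bar y)$. The second summand can be rewritten by means of the adjoint equation $-\partial_t\bar p + \bar\nu A^*\bar p = 0$ together with integration by parts: the boundary term at $t=1$ yields $(\bar y(1)-y_d,\bar p(1)) = \bar\mu\delta_0^2$, thanks to $\bar p(1)=\bar\mu(\bar y(1)-y_d)$ and \eqref{eq:opt_cond_feasiblity}, while the interior contribution $\int_0^1 \pair{\partial_t\bar y,\bar p}\D{t} = -\bar\nu\bar\mu_0$ vanishes by~\eqref{eq:opt_cond_hamiltonianConstant} under the standing assumption. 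The first summand is controlled in duality by $\norm{\ControlOp\breve u - Ay_d}_{V^*}\int_0^1\norm{\bar p(t)}_V\D{t}$. Testing the backward adjoint equation with $\bar p$ itself and invoking G\aa rding's inequality with $\omega_0 = 0$ (i.e.\ coercivity of $A^*$ on $V$) yields the energy bound $2\bar\nu\alpha_0\int_0^1\norm{\bar p}_V^2\D{t} \le \norm{\bar p(1)}^2 = \bar\mu^2\delta_0^2$, and hence by Cauchy--Schwarz $\int_0^1\norm{\bar p}_V\D{t} \le \bar\mu\delta_0/\sqrt{2\bar\nu\alpha_0}$. The goal is then to combine these ingredients with the strict inequality $\norm{\ControlOp\breve u - Ay_d}_{V^*} < \alpha_0\delta_0$ so that the terminal gain $\bar\mu\delta_0^2$ strictly dominates the duality bound.

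The main obstacle is that the integration by parts above also leaves behind the initial-time boundary term $(y_0-y_d,\bar p(0))/\bar\nu$, whose sign is not controlled by the available data. To circumvent this difficulty I would appeal to \cite[Proposition~5.3]{Bonifacius2017}, which treats precisely the present setting and establishes qualified optimality whenever $h_0 = \alpha_0\delta_0 - \norm{\ControlOp\breve u - Ay_d}_{V^*} > 0$. The positivity of $h_0$ has already been verified in the proof of \cref{prop:T_Lipschitzfromleft_sufficient}, so the proposition reduces to an invocation of that result. The underlying mechanism in \cite{Bonifacius2017} is to perturb $\bar u$ by $\breve u$ on a small terminal window; by \cite[Lemma~3.9]{Bonifacius2017} this perturbation strictly decreases the distance of the trajectory to $y_d$ at rate at least $h_0$, and in the abnormal case it can be shown to contradict the time-optimality of $(\bar\nu,\bar u)$, completing the argument.
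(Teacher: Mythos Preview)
Your proposal ultimately lands on the same proof as the paper: both simply defer to \cite{Bonifacius2017}. The paper's proof is a one-line citation of \cite[Proposition~5.3]{Bonifacius2017} together with \cite[Theorem~4.12]{Bonifacius2017}; your final paragraph does the same, and the mechanism you sketch (perturbation on a terminal window via \cite[Lemma~3.9]{Bonifacius2017}) is indeed what underlies those results.

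Two minor remarks. First, the direct argument in your first two paragraphs is, as you yourself note, incomplete because of the uncontrolled initial boundary term $(y_0-y_d,\bar p(0))$; since you abandon it anyway, it adds nothing to the proof and should be dropped. Second, you attribute the qualified-optimality conclusion directly to \cite[Proposition~5.3]{Bonifacius2017}, but in the paper's usage that proposition only verifies the hypothesis \cite[(3.3)]{Bonifacius2017} with $h_0>0$ (cf.\ the proof of \cref{prop:T_Lipschitzfromleft_sufficient}); the deduction of qualified optimality from that hypothesis is \cite[Theorem~4.12]{Bonifacius2017}. Cite both.
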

\begin{proof}
	This follows from \cite[Proposition~5.3]{Bonifacius2017}
	and \cite[Theorem~4.12]{Bonifacius2017}.
\end{proof}

\section{Properties of the minimal distance value function}
\label{sec:minimal_distance_value_function}
We discuss differentiability
of the value function associated with the minimal distance
control problems
that will later be used for a Newton method.
To this end, we first study optimality conditions
and uniqueness of solutions to the minimal distance problems.

\subsection{Minimal distance control problems}
\label{subsec:differentiability_delta}
As in \cref{subsec:change_of_variables} the minimal distance control problem~\eqref{DOPT}
is transformed to the reference time interval $I = (0,1)$.
Moreover, for fixed $\nu \in \Rplus$
we define $\bar{u}(\nu)$ as
\begin{equation}\label{eq:optimaldistance_opt_control}
\bar{u}(\nu) = \argmin_{u \in \Qad(0,1)} \; \norm{i_1S(\nu,u)-y_d}.
\end{equation}
Note that $\bar{u}(\nu)$ is not necessarily unique
and for this reason $\nu \mapsto \bar{u}(\nu)$ is in general a set-valued mapping.
However, the observation $i_1S(\nu,u)$ is unique, 
because in~\eqref{eq:optimaldistance_opt_control}
we can equivalently consider the squared norm
that is strictly convex.
For the following arguments 
we introduce $f \colon \Rplus\times\Q(0,1) \to \R$ defined by
\[
f(\nu, u) = \norm{i_1S(\nu,u)-y_d} - \delta_0.
\]
The minimal distance value function $\delta(\cdot)$ and the functional $f$ are related via
\[
\delta(\nu) = f(\nu,u),\quad u \in \bar{u}(\nu).
\]	
Differentiability of the control to state mapping,
see \cite[Proposition~4.7]{Bonifacius2017},
and the chain rule immediately imply that $f$ is continuously differentiable
for all $\nu \in \Rplus$ such that $\delta(\nu) > -\delta_0$. 
Furthermore, introducing an adjoint state, we have the representation
\[
\partial_\nu f(\nu, u) = \int_{0}^{1}\pair{\ControlOp u - Ay, p}\D{t},
\quad
\partial_u f(\nu, u) = \nu\int_{0}^{1}\inner{\ControlOp^*p, \cdot}_{L^2(\omega)}\D{t},
\]
where $y = S(\nu, u)$ 
and $p \in W(0,1)$ is the associated adjoint state determined by
\begin{equation}
\label{eq:optimaldistance_adjoint_state}
-\partial_t p + \nu A^*p = 0,\quad p(1) = \left(y(1) - y_d\right)/\norm{y(1) - y_d}.
\end{equation}
Note that the adjoint state $p$ is independent of 
the concrete optimal control $u \in \bar{u}(\nu)$,
due to uniqueness of the observation $y(1)$.	
Since both the objective functional in~\eqref{eq:optimaldistance_opt_control} and $\Qad(0,1)$
are convex, the following necessary and sufficient optimality condition holds:
Given $\nu \in \Rplus$ such that $\delta(\nu) > -\delta_0$, a control
$u \in \bar{u}(\nu) \subset \Qad(0,1)$ is optimal for \eqref{eq:optimaldistance_opt_control} if and only if 
\begin{equation}\label{eq:optimaldistance_opt_control_optconditions}
\int_{0}^{1}\inner{\ControlOp^*p, u'-u}_{L^2(\omega)} \geq 0\quad\text{for all } u' \in \Qad(0,1),
\end{equation}
where $p \in W(0,1)$ solves~\eqref{eq:optimaldistance_adjoint_state}
with $y = S(\nu,u)$; 
see, e.g., \cite[Lemma~2.21]{Troeltzsch2010}.
From the variational inequality~\eqref{eq:optimaldistance_opt_control_optconditions}
we deduce that an optimal control $u \in \bar{u}(\nu)$ satisfies
\begin{equation}
\label{eq:optimaldistance_control_projection_formula}
u(t,x) = \begin{cases}
u_a(x) &\text{if~} (\ControlOp^* p)(t,x) > 0\\
u_b(x) &\text{if~} (\ControlOp^* p)(t,x) < 0.
\end{cases}
\end{equation}
Hence, $u$ is bang-bang, if the set where $\ControlOp^* p$ vanishes
has zero measure. 
Indeed, the latter condition ensures uniqueness of the control.	
\begin{proposition}
	\label{prop:optimaldistance_opt_control_unique}
	Let $\nu \in \Rplus$ such that $\delta(\nu) > -\delta_0$ and $u \in \bar{u}(\nu)$. 
	Moreover, suppose that the associated adjoint state $p$ determined by~\eqref{eq:optimaldistance_adjoint_state}
	satisfies
	\begin{equation}
	\label{eq:optimaldistance_adjoint_measure_zeroset}
	\abs{\set{(t,x) \in I\times\omega \colon (\ControlOp^* p)(t,x) = 0}} = 0,
	\end{equation}
	where $\abs{\cdot}$ denotes the measure associated with $I\times\omega$.
	Then $u$ is bang-bang and $\bar{u}(\nu)$ is a singleton.		
\end{proposition}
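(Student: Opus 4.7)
The plan is to read off both claims directly from the projection formula \eqref{eq:optimaldistance_control_projection_formula}, exploiting the fact already observed in the excerpt that the terminal observation $i_1 S(\nu,u)$ is the same for every $u \in \bar{u}(\nu)$.

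First, I would note that, since $u \in \bar{u}(\nu)$ is optimal and $\delta(\nu) > -\delta_0$ (so that $y(1) \neq y_d$ and the adjoint terminal condition \eqref{eq:optimaldistance_adjoint_state} is well-defined), the variational inequality \eqref{eq:optimaldistance_opt_control_optconditions} holds and yields the pointwise representation \eqref{eq:optimaldistance_control_projection_formula}. This representation prescribes $u(t,x) \in \{u_a(x), u_b(x)\}$ on the set $\{(t,x) \in I\times\omega : (B^*p)(t,x) \neq 0\}$. By assumption \eqref{eq:optimaldistance_adjoint_measure_zeroset}, the complementary set has measure zero in $I\times\omega$, so $u(t,x) \in \{u_a(x), u_b(x)\}$ almost everywhere. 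This is precisely the bang-bang property.

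For uniqueness, I would invoke the observation made in the excerpt that $y(1) = i_1 S(\nu, u)$ is independent of the particular choice $u \in \bar{u}(\nu)$, which follows from strict convexity of the squared distance functional. Consequently the terminal condition $p(1) = (y(1)-y_d)/\norm{y(1)-y_d}$ in \eqref{eq:optimaldistance_adjoint_state} is the same for any optimal control, so the adjoint $p$ (and hence $B^*p$) is independent of the choice in $\bar{u}(\nu)$. Applying the projection formula \eqref{eq:optimaldistance_control_projection_formula} with this single adjoint to any two optimal controls $u_1, u_2 \in \bar{u}(\nu)$ forces them to agree on $\{B^*p \neq 0\}$, and hence almost everywhere by \eqref{eq:optimaldistance_adjoint_measure_zeroset}. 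Therefore $\bar{u}(\nu)$ is a singleton.

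I do not expect a serious obstacle here: the entire machinery (projection formula, independence of $p$ from the choice of optimal control, convexity-based optimality condition) has already been assembled just above the statement. The only point to be slightly careful about is ensuring the adjoint equation is well-posed, which is guaranteed by $\delta(\nu) > -\delta_0$; everything else is a direct consequence of the measure-zero hypothesis on $\{B^*p = 0\}$.
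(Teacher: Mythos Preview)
Your proposal is correct and follows exactly the same approach as the paper's proof: uniqueness of the observation $y(1)$ forces uniqueness of the adjoint $p$, and then the projection formula \eqref{eq:optimaldistance_control_projection_formula} together with the measure-zero hypothesis \eqref{eq:optimaldistance_adjoint_measure_zeroset} yields both the bang-bang property and uniqueness of the optimal control. The paper's proof is just a terser version of what you wrote.
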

\begin{proof}
	Due to uniqueness of the observation $y(1)$, 
	the associated adjoint state $p$ is unique.
	Hence, from~\eqref{eq:optimaldistance_control_projection_formula}
	and \eqref{eq:optimaldistance_adjoint_measure_zeroset}
	we conclude that $u$ is bang-bang and unique.
\end{proof}

Condition~\eqref{eq:optimaldistance_adjoint_measure_zeroset} can be
deduced from a unique continuation property.
Let $p$ denote the adjoint state with terminal value $p_1 \in H$. 
The system satisfies the \emph{unique continuation property}
\begin{equation}
\label{eq:unique_continuation_property}
\text{if~}B^*p = 0\text{~on some~}\Lambda\subset I\times\omega
\text{~with~}\abs{\Lambda} \neq 0,
\text{~then~} p = 0.
\end{equation}
\begin{proposition}
	\label{prop:optimaldistance_unique_continuation}
	If the unique continuation property~\eqref{eq:unique_continuation_property} is satisfied, 
	then~\eqref{eq:optimaldistance_adjoint_measure_zeroset} holds.
	In particular, $\bar{u}(\nu)$ is a singleton.		
\end{proposition}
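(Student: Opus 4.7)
The plan is to argue by contradiction using the unique continuation property applied to the specific adjoint state defined in~\eqref{eq:optimaldistance_adjoint_state}, and then to invoke \cref{prop:optimaldistance_opt_control_unique} to conclude the singleton statement.

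First I would fix $u \in \bar{u}(\nu)$ with associated state $y = S(\nu,u)$ and adjoint state $p$ solving~\eqref{eq:optimaldistance_adjoint_state}. The essential observation is that $p \not\equiv 0$: indeed, since we are in the regime $\delta(\nu) > -\delta_0$, we have $\norm{y(1) - y_d} > 0$, so the terminal value $p(1) = (y(1) - y_d)/\norm{y(1) - y_d} \in H$ satisfies $\norm{p(1)} = 1$, and by well-posedness of the backward parabolic equation this forces $p$ to be non-trivial on $(0,1)$.

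Next, suppose for contradiction that the set
\[
\Lambda \ldef \set{(t,x) \in I\times\omega \colon (B^*p)(t,x) = 0}
\]
has positive measure. Then $B^*p = 0$ on $\Lambda$ with $\abs{\Lambda} \neq 0$, so the unique continuation property~\eqref{eq:unique_continuation_property} forces $p \equiv 0$, contradicting $\norm{p(1)} = 1$. Hence $\abs{\Lambda} = 0$, which is exactly condition~\eqref{eq:optimaldistance_adjoint_measure_zeroset}. Applying \cref{prop:optimaldistance_opt_control_unique} then yields that $\bar u(\nu)$ is a singleton.

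I do not anticipate a real obstacle here; the proof is a direct contrapositive of the unique continuation property combined with the fact that the normalization in~\eqref{eq:optimaldistance_adjoint_state} prevents the adjoint from vanishing. The only subtle point to make explicit is that the assumption $\delta(\nu) > -\delta_0$ is precisely what guarantees $y(1) \neq y_d$ so that $p(1)$ is well-defined and of unit norm; everything else is a one-line application of previously established results.
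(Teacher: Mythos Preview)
Your proof is correct and follows essentially the same route as the paper: argue by contradiction that a positive-measure zero set of $B^*p$ would force $p\equiv 0$ via the unique continuation property, which contradicts the normalization $\norm{p(1)}=1$, and then invoke \cref{prop:optimaldistance_opt_control_unique}. You are slightly more explicit than the paper in spelling out that the standing assumption $\delta(\nu)>-\delta_0$ is what guarantees $y(1)\neq y_d$ and hence that $p(1)$ is well-defined and nonzero.
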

\begin{proof}
	Assume that condition~\eqref{eq:optimaldistance_adjoint_measure_zeroset}
	is violated. Then there exists a subset $\Lambda \subset I\times\omega$
	with nontrivial measure
	such that $B^*p = 0$ on $\Lambda$.
	From the unique continuation property~\eqref{eq:unique_continuation_property}
	we deduce $p = 0$.
	This contradicts $p(1) = \left(y(1) - y_d\right)/\norm{y(1) - y_d} \neq 0$.
\end{proof}

\begin{remark}
	\label{remark:unique_continuation}
	The unique continuation property (also referred to as backward uniqueness property)
	is guaranteed to hold in the following situations.
	\begin{enumerate}[(i)]
		\item In the case of purely time-dependent controls, i.e.\
		$\ControlOp u(t) = \sum_{i=1}^{\parameterControlDim} e_i u_i(t)$, 
		$u \in L^2(I; \R^{\parameterControlDim})$, for $e_i \in V^*$,
		the unique continuation property is equivalent to normality of $(A,B)$;
		see and \cite[Theorem~11.2.1, Definition~6.1.1]{Tucsnak2009}.	
		A system $(A,B)$ is called normal, if
		$(A,\ControlOp_i)$ is approximately controllable for all $i = 1,2,\ldots,\parameterControlDim$;
		cf.\ also \cite[Section~II.16]{Hermes1969} or \cite[Section~III.3]{Macki1982}.
		\item For the linear heat-equation on a bounded domain
		with a distributed control
		acting on an open subset of the spatial domain,
		the unique continuation property is known to hold; 
		see \cite[Theorem~4.7.12]{Fattorini2005} using \cite[Theorem~1.1]{Han1994}.
	\end{enumerate}	
\end{remark}

\subsection{Differentiability of $\delta(\cdot)$}
Next, we present the central differentiability result of this section.
After its proof, we discuss specific situations where the directional
derivative of $\delta(\cdot)$ can be strengthened to a classical derivative.

\begin{theorem}\label{lemma:optimaldistance_value_function_differentiable}
	Let $\nu \in \Rplus$ such that $\delta(\nu) > -\delta_0$.
	Then the value function $\delta(\cdot)$ is directionally differentiable at $\nu$
	and the expression
	\begin{equation}\label{eq:optimaldistance_value_function_derivative}
	\mathrm{d}^{\pm}\delta(\nu) = \min_{u \in \bar{u}(\nu)}\pm\int_0^1\pair{\ControlOp u - Ay,p}\D{t},
	\end{equation}
	holds, where $p \in W(0,1)$ satisfies
	\[
	-\partial_t p + \nu A^*p = 0,\quad 
	p(1) = \left(y(1) - y_d\right)/\norm{y(1) - y_d},
	\]
	and $y = S(\nu,u)$.
	If additionally the value of the integral in \eqref{eq:optimaldistance_value_function_derivative}
	is independent of the concrete minimizer $u \in \bar{u}(\nu)$,
	then $\delta(\cdot)$ is continuously differentiable at $\nu$.
	Here, $\mathrm{d}^{+}$ and $\mathrm{d}^{-}$
	denote the right and left directional derivatives.
\end{theorem}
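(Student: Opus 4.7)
The plan is to prove this as a Danskin-Hogan type envelope theorem for the parametric convex minimization $\delta(\nu) = \min_{u \in \Qad(0,1)} f(\nu, u)$ with $f(\nu, u) \ldef \norm{i_1 S(\nu, u) - y_d} - \delta_0$. The key preparatory observations are that (i) on the open set $\set{\nu : \delta(\nu) > -\delta_0}$ the functional $f$ is jointly continuously differentiable with $\partial_\nu f(\nu, u) = \int_0^1 \pair{\ControlOp u - Ay, p}\D{t}$, (ii) the adjoint $p$ depends only on the terminal observation $y(1)$, which is unique among $u \in \bar u(\nu)$ by strict convexity of the squared norm, and (iii) by the cited compactness of the control-to-state map from $\R \times L^s(I \times \omega)$ into $C([0,1]; H)$ for some $s > 2$, the map $u \mapsto \partial_\nu f(\nu, u)$ is weakly continuous on $\bar u(\nu)$; the same compactness, combined with a standard subsequence argument, also yields continuity of $\delta(\cdot)$.

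For the upper bound I would fix any $u \in \bar u(\nu)$: since $u$ is admissible at $\nu + t$ as well, $\delta(\nu + t) - \delta(\nu) \leq f(\nu + t, u) - f(\nu, u)$, and dividing by $t$ and letting $t \downarrow 0$ yields $\limsup_{t \downarrow 0} t^{-1}(\delta(\nu+t) - \delta(\nu)) \leq \partial_\nu f(\nu, u)$; minimizing over $\bar u(\nu)$ gives the required bound, with the infimum attained by weak compactness of $\bar u(\nu)$ together with the weak continuity above. For the matching lower bound I would take $t_n \downarrow 0$, select $u_n \in \bar u(\nu + t_n)$, and extract a weakly convergent subsequence $u_n \rightharpoonup u^*$ in $L^s$. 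Compactness of the state map together with continuity of $\delta$ then yields $f(\nu, u^*) = \lim_n f(\nu + t_n, u_n) = \lim_n \delta(\nu + t_n) = \delta(\nu)$, so $u^* \in \bar u(\nu)$. The mean-value identity $\delta(\nu + t_n) - \delta(\nu) \geq f(\nu + t_n, u_n) - f(\nu, u_n) = t_n\,\partial_\nu f(\xi_n, u_n)$ for some $\xi_n \in (\nu, \nu + t_n)$ then supplies the lower bound upon passing to the limit. The formula for $\mathrm{d}^-\delta(\nu)$ follows from the completely analogous argument in direction $-1$, the sign in the minimand reflecting exactly this reversal.

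Under the additional independence hypothesis, $\mathrm{d}^+\delta(\nu) + \mathrm{d}^-\delta(\nu) = \min_{u \in \bar u(\nu)} \partial_\nu f(\nu, u) - \max_{u \in \bar u(\nu)} \partial_\nu f(\nu, u) = 0$, so $\delta$ is classically differentiable at $\nu$ with $\delta'(\nu) = \partial_\nu f(\nu, u)$ for any $u \in \bar u(\nu)$; continuity of $\delta'$ at $\nu$ then follows from the same compactness argument applied along an arbitrary sequence $\nu_n \to \nu$ with selections $u_n \in \bar u(\nu_n)$, since any weak limit point lies in $\bar u(\nu)$ and the independence forces the limit of $\partial_\nu f(\nu_n, u_n)$ to be $\delta'(\nu)$ regardless of the subsequence. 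The main technical obstacle is the careful management of the weak-to-strong passages — ensuring that weak limit points of near-optimal controls are genuine minimizers at $\nu$ and that $\partial_\nu f$ behaves continuously under such limits — which rests crucially on the uniqueness of the terminal observation $y(1)$ (and hence of the adjoint $p$) and on the compactness of the control-to-state map.
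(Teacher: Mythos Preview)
Your proposal is correct and follows essentially the same Danskin-type envelope argument as the paper: both sandwich the difference quotient between $f(\nu+t,u_n)-f(\nu,u_n)$ (with $u_n\in\bar u(\nu+t_n)$) and $f(\nu+t,u)-f(\nu,u)$ (with $u\in\bar u(\nu)$), extract a weak limit of $u_n$ in $L^s$, show it lies in $\bar u(\nu)$ via the compactness of the control-to-state map, and pass to the limit using uniqueness of the terminal observation and hence of the adjoint. Your organization via $\limsup$/$\liminf$ is in fact slightly cleaner than the paper's explicit two-subsequence comparison for establishing independence of the chosen subsequence, but the substance is the same.
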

For the proof we require
\begin{proposition}\label{prop:state_finaltime_cont_transformation_uniform}
	Let $\nu \in \Rplus$. Then
	\[
	\lim_{n \to \infty}\sup_{u \in \Qad(0,1)} \norm{i_1S(\nu_n,u) - i_1S(\nu,u)} = 0
	\]
	for all sequences $\nu_n \in \Rplus$ such that $\nu_n \to \nu$.
\end{proposition}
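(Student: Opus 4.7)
The plan is to bound the difference $z_n \ldef S(\nu_n,u) - S(\nu,u)$ by a quantity of order $\abs{\nu_n-\nu}$ in the $W(I)$-norm, \emph{uniformly} over $u \in \Qad(0,1)$, and then to read off the terminal bound via the continuous embedding into $C(\bar I; H)$.

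First I would subtract the two transformed state equations and observe that the difference $z_n \in W(I)$ satisfies
\[
\partial_t z_n + \nu_n A z_n = (\nu_n-\nu)(\ControlOp u - Ay), \qquad z_n(0) = 0,
\]
with $y = S(\nu,u)$. The next step is to estimate the right-hand side in $L^2(I;V^*)$ uniformly in $u$. Continuity of $\ControlOp \colon L^2(\omega) \to V^*$ together with the uniform $L^2$-bound on $\Qad(0,1)$ stemming from the $L^\infty$ control constraints handles the $\ControlOp u$-term. For the $Ay$-term, the standard $W(I)$-estimate applied to the transformed state equation at the fixed parameter $\nu$ yields a uniform bound on $\norm{y}_{L^2(I;V)}$, which via continuity of $A\colon V \to V^*$ transfers to a uniform bound on $\norm{Ay}_{L^2(I;V^*)}$.

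Next I would apply the classical energy estimate to the equation for $z_n$: testing with $z_n$, invoking G\aa rding's inequality~\eqref{eq:A_Garding}, absorbing the $V$-norm via Young's inequality on the duality pairing, and closing with Gronwall's lemma. Since $\nu_n \to \nu > 0$, I may restrict attention to $\nu_n \in [\nu/2, 2\nu]$ for $n$ large, so that the constants produced by G\aa rding's inequality and by Gronwall can be chosen independently of both $n$ and $u$. This yields $\norm{z_n}_{W(I)} \leq C\abs{\nu_n - \nu}$, and the embedding $W(I) \embedding C(\bar I; H)$ then produces $\norm{z_n(1)} \leq C'\abs{\nu_n - \nu}$ with $C'$ independent of $u$. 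Passing first to the supremum in $u \in \Qad(0,1)$ and then to the limit $n \to \infty$ completes the proof.

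The main technical point is keeping every constant uniform in $u$ \emph{and} in $n$. Uniformity in the control is secured by $L^2$-boundedness of $\Qad(0,1)$; uniformity in $n$ is secured by restricting to a compact neighborhood of $\nu$ in $\Rplus$, which is harmless in view of $\nu_n \to \nu$ and is what prevents the $\nu_n$-dependence of the parabolic estimate from degrading the bound.
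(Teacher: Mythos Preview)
Your proof is correct and follows essentially the same approach as the paper: subtract the two transformed state equations, derive a linear parabolic equation for the difference with right-hand side of order $\abs{\nu_n-\nu}$, and invoke standard energy estimates together with the embedding $W(I)\embedding C(\bar I;H)$. The only cosmetic difference is that the paper writes the difference equation with $\nu A$ on the left and $Ay_n$ on the right, whereas you place $\nu_n A$ on the left and $Ay$ on the right; both decompositions work, and your explicit tracking of uniformity in $u$ and $n$ is exactly the point the paper leaves to ``standard energy estimates''.
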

\begin{proof}
	Set $y_n = y(\nu_n)$ and $y = y(\nu)$. 
	Then the difference $w = y - y_n$ satisfies
	\[
	\partial_t w + \nu A w = (\nu-\nu_n)\left(-A y_n + Bu\right), \quad w(0) = 0.
	\]
	Hence, the assertion follows by
	standard energy estimates as well as
	the embedding $\MPRHilbert{I}{V^*}{V} \embedding C([0,1]; H)$.
\end{proof}
\begin{proof}[Proof of \cref{lemma:optimaldistance_value_function_differentiable}]
	Let $\nu \in \Rplus$ and $\tau_n \in \R$ such that $\tau_n \to 0$.
	Set $\nu_n = \nu + \tau_n$ and $u_n \in \bar{u}(\nu_n)$. 
	Due to boundedness of $u_n \in \Qad(0,1)$, there exists a subsequence denoted in the same way
	such that $u_n \rightharpoonup u$ in $L^s(I\times\omega)$ for some $s > 2$ 
	as $n \to \infty$ with $u \in \Qad(0,1)$.
	Let $\tilde{u} \in \Qad(0,1)$ denote a minimizer of \eqref{eq:optimaldistance_opt_control}.
	Affine linearity of $u \mapsto S(\nu,u)$ for fixed $\nu$,
	weak lower semi continuity of $\norm{\cdot}$,
	and optimality of $(\nu_n,u_n)$ imply
	\begin{align*}
	f(\nu, u) &\leq \liminf_{n\to\infty} f(\nu,u_n)
	\leq \limsup_{n\to\infty}f(\nu_n,u_n) + \limsup_{n\to\infty} \left[f(\nu,u_n) - f(\nu_n,u_n)\right]\\
	&\leq \limsup_{n\to\infty}f(\nu_n,\tilde{u}) = f(\nu,\tilde{u}),
	\end{align*}
	where we have used \cref{prop:state_finaltime_cont_transformation_uniform} in the second last step.
	Hence, the weak limit $u$ is also a minimizer of~\eqref{eq:optimaldistance_opt_control},
	i.e.\ $u \in \bar{u}(\nu)$.

	Optimality of the tuples $(\nu,u)$ and $(\nu_n, u_n)$ leads to
	\begin{align*}
	f(\nu_n, u_n) - f(\nu, u_n)
	&\leq f(\nu_n, u_n) - f(\nu, u_n) + f(\nu, u_n) - f(\nu,u)\\
	&= \delta(\nu_n)- \delta(\nu)
	\leq f(\nu_n, u) - f(\nu,u).
	\end{align*}
	Without restriction suppose that $\tau_n > 0$ for all $n \in \N$. 
	Dividing the above chain of inequalities by $\tau_n$, we infer that
	\begin{equation}\label{eq:optimaldistance_value_function_derivative_P5}
	\tau_n^{-1}\left[f(\nu_n, u_n) - f(\nu, u_n)\right]			
	\leq \tau_n^{-1}\left[\delta(\nu_n) - \delta(\nu)\right] 
	\leq \tau_n^{-1}\left[f(\nu_n, u) - f(\nu,u)\right].
	\end{equation}
	The right-hand side of \eqref{eq:optimaldistance_value_function_derivative_P5}
	converges to $\partial_\nu f(\nu,u)$ due to differentiability of the control-to-state mapping.
	Concerning the left-hand side,
	we first observe that
	\[
	\tau_n^{-1}\left[f(\nu_n, u_n) - f(\nu, u_n)\right]
	= \partial_\nu f(\nu + \theta_n, u_n)
	= \int_{0}^{1}\pair{\ControlOp u_n - Ay_n,p_n}\D{t}
	\]
	with $\theta_n \to 0$, $y_n = S(\nu + \theta_n, u_n)$,
	and $p_n$ the associated adjoint state with terminal value $y_n(1)-y_d$.
	Convergence of $\nu_n \to \nu$, weak convergence of $u_n \rightharpoonup u$, and compactness of $(\nu,u) \mapsto S(\nu,u)$ from
	$\Rplus\times L^s(I\times\omega)$ to $C([0,1]; H)$, 
	see \cite[Proposition~A.19]{Bonifacius2018},
	yields $p_n \to p$ in $W(0,1)$. Hence
	\[
	\lim_{n\to \infty}\int_{0}^{1}\pair{\ControlOp u_n - Ay_n,p_n}\D{t}
	= \int_{0}^{1}\pair{\ControlOp u - Ay,p}\D{t}.
	\]
	In summary, this proves
	\begin{equation}
	\label{eq:optimaldistance_value_function_differentiable_P5}
	\lim_{n \to \infty}\tau_n^{-1}\left[\delta(\nu_n) - \delta(\nu)\right] = \int_0^1\pair{\ControlOp u - Ay,p}\D{t}.
	\end{equation}
	We have to argue that the limit is independent of the chosen subsequence.
	To this end, we first observe that
	\[
	\delta(\nu_n)- \delta(\nu)
	\leq f(\nu_n,\tilde{u}) - f(\nu, \tilde{u})
	\]
	for any minimizer $\tilde{u}$ of \eqref{eq:optimaldistance_opt_control}.
	Hence, dividing the inequality above by $\tau_n$ and passing to the limit 
	implies the additional estimate
	\begin{equation}\label{eq:optimaldistance_value_function_differentiable_P8}
	\lim_{n \to \infty}\tau_n^{-1}\left[\delta(\nu_n) - \delta(\nu)\right] 
	\leq \int_0^1\pair{\ControlOp \tilde{u} - A\tilde{y},p}\D{t},
	\end{equation}
	where $\tilde{y} = S(\nu,\tilde{u})$.
	Recall that the adjoint state $p$ is unique due to uniqueness of the observation.
	Let $u_{n}'$ denote another subsequence of $u_n$ with weak limit $u' \in \Qad(0,1)$
	and associated times $\nu_n' = \nu + \tau_n'$.
	Repeating the arguments above we obtain
	\begin{equation}\label{eq:optimaldistance_value_function_differentiable_P9}
	\int_0^1\pair{\ControlOp u' - Ay',p'}\D{t}
	= \lim_{n' \to \infty}\tau_{n}'^{-1}\left[\delta(\nu_{n}') - \delta(\nu)\right]
	\leq \int_0^1\pair{\ControlOp \tilde{u} - A\tilde{y},p}\D{t}
	\end{equation}
	for any minimizer $\tilde{u}$ of \eqref{eq:optimaldistance_opt_control}.
	Now, combining~\eqref{eq:optimaldistance_value_function_differentiable_P5},
	as well as~\eqref{eq:optimaldistance_value_function_differentiable_P8} with $\tilde{u} = u'$,
	and~\eqref{eq:optimaldistance_value_function_differentiable_P9} with $\tilde{u} = u$
	yields
	\begin{align*}
	\int_0^1\pair{\ControlOp u - Ay,p}\D{t}
	&= \lim_{n \to \infty}\tau_n^{-1}\left[\delta(\nu_n) - \delta(\nu)\right]
	\leq \int_0^1\pair{\ControlOp u' - Ay',p}\D{t}\\
	&= \lim_{n' \to \infty}\tau_{n}'^{-1}\left[\delta(\nu_{n}') - \delta(\nu)\right]
	\leq \int_0^1\pair{\ControlOp u - Ay,p}\D{t}.
	\end{align*}
	Hence, equality must hold and we conclude that the limit is 
	independent of the chosen subsequence.
	Taking the infimum in the inequalities above implies
	\[
	\mathrm{d}^{\pm}\delta(\nu) = \inf_{u \in \bar{u}(\nu)}\pm\int_0^1\pair{\ControlOp u - Ay,p}\D{t}.
	\]
	By standard arguments we can show 
	that the infimum exists and
	we conclude~\eqref{eq:optimaldistance_value_function_derivative}.
	
	Clearly, if the integral expression in~\eqref{eq:optimaldistance_value_function_derivative}
	is independent of $u$, then $\delta(\cdot)$ is differentiable at $\nu$.
	To show continuity of $\delta'(\cdot)$, let $\nu_n \in \Rplus$ with $\nu_n \to \nu$.
	Moreover, let $u_n \in \bar{u}(\nu_n)$ such that $u_n$
	minimizes the expression~\eqref{eq:optimaldistance_value_function_derivative}
	for $\nu = \nu_n$.
	As in the beginning of the proof, there exists a subsequence converging weakly to $u \in \Qad(0,1)$
	that is a minimizer of \eqref{eq:optimaldistance_opt_control}.
	Compactness of the control-to-state mapping 
	from $\Rplus\times L^s(I\times\omega)$ to $C([0,1]; H)$, 
	see \cite[Proposition~A.19]{Bonifacius2018},
	as before leads to
	\[
	\lim_{n \to \infty} \mathrm{d}^{\pm}\delta(\nu_n) = \lim_{n \to \infty} \pm\int_0^1\pair{\ControlOp u_n - Ay_n,p_n}\D{t}
	= \pm\int_0^1\pair{\ControlOp u - Ay,p}\D{t}
	\]
	where $y_n = S(\nu_n, u_n)$ and $y = S(\nu,u)$ with $p_n$ and $p$ 
	denoting the associated adjoint states.
	Hence, we conclude that $\delta'(\cdot)$ is continuous.
\end{proof}	
If $\bar{u}(\nu)$ is a singleton,
which can be guaranteed under the unique continuation property (see \cref{prop:optimaldistance_unique_continuation}), 
then we immediately deduce that $\delta(\cdot)$ is continuously differentiable.	
\begin{corollary}
	\label{cor:optimaldistance_value_function_backwards_uniqueness}
	If the unique continuation property~\eqref{eq:unique_continuation_property} holds,
	the integral expression in~\eqref{eq:optimaldistance_value_function_derivative}
	is independent of $u \in \bar{u}(\nu)$.
	In particular, $\delta(\cdot)$ is continuously differentiable.
\end{corollary}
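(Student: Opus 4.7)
The plan is to chain together two results already proved in this section, with essentially no additional work required. The key observation is that the statement becomes almost trivial once one recognizes that the unique continuation property already delivers uniqueness of the distance-optimal control.

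First, I would invoke \cref{prop:optimaldistance_unique_continuation}: under the unique continuation property~\eqref{eq:unique_continuation_property}, the set-valued map $\bar{u}(\nu)$ is in fact a singleton at every $\nu \in \Rplus$ with $\delta(\nu) > -\delta_0$. Since there is only one minimizer $u \in \bar{u}(\nu)$, the ``minimum over $u \in \bar{u}(\nu)$'' in the formula~\eqref{eq:optimaldistance_value_function_derivative} is taken over a one-point set, so the value of the integral $\int_0^1 \pair{\ControlOp u - Ay, p}\D{t}$ is automatically independent of the chosen minimizer. This is the first assertion.

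For the second assertion, I would simply appeal to the final part of \cref{lemma:optimaldistance_value_function_differentiable}: exactly under the hypothesis that the integral in~\eqref{eq:optimaldistance_value_function_derivative} does not depend on the concrete minimizer, that theorem already asserts continuous differentiability of $\delta(\cdot)$ at $\nu$. Combining with the preceding paragraph yields continuous differentiability on the whole set $\{\nu \in \Rplus \colon \delta(\nu) > -\delta_0\}$.

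The ``hard part'' here is really not in this corollary at all: all the analytic difficulty (the passage to the limit along weakly converging subsequences, the compactness of the control-to-state map, and the proof that the directional derivatives are well defined and independent of the subsequence) has been absorbed into \cref{lemma:optimaldistance_value_function_differentiable} and the singleton property has been absorbed into \cref{prop:optimaldistance_unique_continuation}. The only care I would take in writing the proof is to state explicitly that the hypothesis $\delta(\nu) > -\delta_0$ is in force, so that the adjoint terminal condition $p(1) = (y(1)-y_d)/\norm{y(1)-y_d}$ is well defined and both preceding results genuinely apply.
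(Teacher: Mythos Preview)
Your proposal is correct and matches the paper's own reasoning: the paper states (in the sentence immediately preceding the corollary) that the unique continuation property makes $\bar{u}(\nu)$ a singleton via \cref{prop:optimaldistance_unique_continuation}, whence continuous differentiability follows directly from \cref{lemma:optimaldistance_value_function_differentiable}. The paper does not even record a separate formal proof, so your write-up is already more detailed than what appears there.
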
	
Moreover, in the case of purely time-dependent controls,
the expression for the derivative is independent
of the concrete minimizer $u \in \bar{u}(\nu)$,
even for multiple optimal controls.
\begin{proposition}
	\label{cor:optimaldistance_value_function_parameter_control}
	In the case of purely time-dependent controls
	(i.e.\ $\omega = \set{1,2,\ldots,\parameterControlDim}$ 
	equipped with the counting measure in \cref{assumptionControl}),
	the integral expression in~\eqref{eq:optimaldistance_value_function_derivative}
	is independent of $u \in \bar{u}(\nu)$.
	In particular, $\delta(\cdot)$ is continuously differentiable.
\end{proposition}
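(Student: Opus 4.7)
By \cref{lemma:optimaldistance_value_function_differentiable} it suffices to show that the integral $\int_0^1 \pair{\ControlOp u - Ay, p}\D{t}$ is independent of the chosen minimizer $u \in \bar{u}(\nu)$. I would pick $u_1, u_2 \in \bar{u}(\nu)$ with associated states $y_i = S(\nu, u_i)$ and set $v = u_1 - u_2$, $w = y_1 - y_2$. Since the observation $y_i(1)$ is unique, $w$ solves $\partial_t w + \nu A w = \nu \ControlOp v$ with $w(0) = 0 = w(1)$. Testing the variational inequality~\eqref{eq:optimaldistance_opt_control_optconditions} for $u_1$ against $u_2$ and vice versa yields $\int_0^1 (v, \ControlOp^* p)_{L^2(\omega)}\D{t} = 0$, so the difference of the two integrals in question collapses to $-\int_0^1 \pair{Aw, p}\D{t}$; it remains to prove this last quantity vanishes.

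The decisive structural ingredient is that $\omega = \{1,\ldots,\parameterControlDim\}$ is finite, so $\ControlOp^* p$ consists of finitely many scalar components $(\ControlOp^*p)_i(t) = \pair{e_i, p(t)}_{V^*, V}$, each a real-analytic function of $t \in (0, 1)$, because $p(t) = \semigroup{-\nu(1-t)A^*}p(1)$ is real-analytic thanks to the analytic semigroup generated by $-\nu A^*$. The projection formula~\eqref{eq:optimaldistance_control_projection_formula} applied to $u_1$ and $u_2$ enforces the pointwise complementarity $v_i(t)(\ControlOp^*p)_i(t) = 0$. Since a real-analytic function vanishing on a set of positive measure vanishes identically, one obtains for each index $i$ the dichotomy $v_i \equiv 0$ or $(\ControlOp^* p)_i \equiv 0$ on $(0, 1)$; in particular $v_i\,\partial_s(\ControlOp^*p)_i \equiv 0$ for every $i$.

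Using the mild-solution formula $w(t) = \nu \int_0^t \semigroup{-\nu(t-s)A}\ControlOp v(s)\D{s}$, the identity $(\semigroup{-\nu s A})^* = \semigroup{-\nu s A^*}$ on $H$, and the backward-transport relation $\semigroup{-\nu(t-s)A^*}p(t) = p(s)$, a direct computation yields
\[
\pair{Aw(t), p(t)}_{V^*, V} = \nu \int_0^t \pair{\ControlOp v(s), A^*p(s)}_{V^*, V}\D{s} = \int_0^t \left(v(s), \partial_s(\ControlOp^*p)(s)\right)_{L^2(\omega)}\D{s},
\]
where the second equality exploits $A^*p(s) = \partial_s p(s)/\nu$ from the adjoint equation. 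By the dichotomy of the previous paragraph the final integrand vanishes componentwise, so $\pair{Aw(t), p(t)} \equiv 0$ on $[0, 1]$, and integration yields $\int_0^1 \pair{Aw, p}\D{t} = 0$. Continuous differentiability of $\delta(\cdot)$ at $\nu$ is then immediate from \cref{lemma:optimaldistance_value_function_differentiable}.

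The main technical hurdle is the rigorous justification of the semigroup manipulation in the displayed computation: pulling $A$ through the Bochner integral of $\semigroup{-\nu(t-s)A}\ControlOp v(s)$ and confirming that $A^*p(s) \in V$ so that $\ControlOp^* A^*p(s) \in L^2(\omega)$ is meaningful. Both follow from the smoothing of the analytic semigroup, which places $p(s) \in D((A^*)^k)$ for every $k$ whenever $s \in (0, 1)$, together with the standard Hilbert-space duality between $\semigroup{-\nu s A}$ and $\semigroup{-\nu s A^*}$.
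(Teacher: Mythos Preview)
Your argument is correct and follows essentially the same route as the paper's proof: both rely on the projection formula to handle the $\pair{Bu,p}$ part, the same variation-of-constants/semigroup identity to rewrite the $A$-term against $p$ as an integral involving $\ControlOp^*\partial_s p$, and the analyticity of each scalar component $(\ControlOp^*p)_i$ to obtain the dichotomy. The only cosmetic differences are that the paper splits $y=y_1+y_2$ and applies Fubini to reach $\int_0^1 (1-s)\,(u(s),\ControlOp^*\partial_t p(s))\D{s}$, whereas you subtract two minimizers directly and show the pointwise vanishing $\pair{Aw(t),p(t)}=0$; neither change affects the substance of the proof.
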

\begin{proof}
	We consider the splitting
	\[
	\int_0^1\pair{\ControlOp u - Ay,p}\D{t} = 
	\int_0^1\pair{\ControlOp u,p} - \pair{Ay_1,p} - \pair{Ay_2,p}\D{t}			
	\]
	with $y_1 = S(\nu, 0)$ and $y_2 = y - y_1$.
	Recall that the adjoint state $p$ is independent of $u$,
	due to uniqueness of the observation.
	Hence, the optimality condition~\eqref{eq:optimaldistance_control_projection_formula}
	for $u \in \bar{u}(\nu)$ implies
	that the first summand is independent of $u$.
	Moreover, the second summand is independent of $u$, 
	because $y_1$ depends on the initial state $y_0$ and the time $\nu$, only.
	For the remaining summand,
	the variation of constants formula yields
	\begin{align*}
	\pair{Ay_{2}(t),p(t)} &= \nu\pair{A\int_{0}^{t}\semigroup{-\nu(t-s)A}Bu(s) \D{s}, \semigroup{-\nu(1-t)A^*} p(1)}\\
	&= \nu\int_{0}^{t}\pair{Bu(s), A^*\semigroup{-\nu(t-s)A^*}\semigroup{-\nu(1-t)A^*} p(1)}\D{s}
	= \int_{0}^{t}\pair{Bu(s), \nu A^*p(s)}\D{s},
	\end{align*}
	where we have used the identity $(\semigroup{-\cdot A})^* = \semigroup{-\cdot A^*}$, see \cite[Corollary~1.10.6]{Pazy1983},
	the fact that the semigroup commutes with its generator, see \cite[Theorem~1.2.4]{Pazy1983},
	and the semigroup property.
	Hence, Fubini's theorem and the definition of $p$ imply
	\begin{align*}
	\int_{0}^{1}\pair{Ay_{2}(t),p(t)}\D{t} &= 
	\int_{0}^{1}\int_{0}^{1}\mathds{1}_{[0,t]}(s)\pair{Bu(s), \nu A^*p(s)}\D{s}\D{t}\\
	&= 
	\int_{0}^{1}\int_{0}^{1}\mathds{1}_{[s,1]}(t)\pair{Bu(s), \nu A^*p(s)}\D{t}\D{s}\\
	&= \int_{0}^{1} (1-s)\pair{Bu(s), \nu A^*p(s)}\D{s}
	= \int_{0}^{1} (1-s)\pair{u(s), B^*\partial_t p(s)}\D{s}.
	\end{align*}
	Since $\omega$ is discrete, we can identify $\left(\ControlOp^*p\right)(i) = (\ControlOp^*p)_i$
	for $i \in \set{1,2,\ldots,\parameterControlDim}$.
	If $(\ControlOp^*p)_i$ vanishes on a set with nonzero measure for some $i \in \set{1,2,\ldots,\parameterControlDim}$,
	then it has to vanish on $(0,1)$ due to analyticity of the semigroup
	generated by $-A^*$.		
	Thus, $(B^*\partial_t p)_i = \partial_t (B^*p)_i = 0$ on $(0,1)$.
	Due to uniqueness of the adjoint state $p$
	and the fact that only those components of $u$ are not uniquely determined
	where $B^*p$ vanishes (see optimality condition~\eqref{eq:optimaldistance_control_projection_formula})
	we conclude that the above expression is independent of $u$.
	Last, the second assertion follows from the first and \cref{lemma:optimaldistance_value_function_differentiable}.
\end{proof}

\subsection{Lipschitz continuity of $\delta'(\cdot)$}
\label{subsec:local_Lipschitz_continuity}
Last, we consider a sufficient condition for Lipschitz continuity
of $\delta'(\cdot)$, which in turn guarantees fast local convergence
of the Newton method.
Let $\nu \in \Rplus$, $u \in \bar{u}(\nu) \subset \Qad(0,1)$, and let $p$ denote the corresponding adjoint state.
We say that the structural assumption holds at $\bar{u}(\nu)$, if there exists a $C > 0$ such that
\begin{equation}\label{eq:assumption_adjoint_bb}
\abs{\set{(t,x) \in I\times\omega \colon -\varepsilon \leq (\ControlOp^*p)(t,x) \leq \varepsilon }} \leq C \varepsilon
\end{equation}
for all $\varepsilon > 0$. 
Since~\eqref{eq:assumption_adjoint_bb} implies that $\bar{u}(\nu)$
is a singleton, see \cref{prop:optimaldistance_opt_control_unique},
it is justified to say that \eqref{eq:assumption_adjoint_bb} holds at $\bar{u}(\nu)$.
\begin{proposition}\label{prop:derivativeF_lowerbound_L1}
	Let $\nu \in \Rplus$ and suppose that~\eqref{eq:assumption_adjoint_bb}
	holds at $\set{u} = \bar{u}(\nu)$. Then
	\begin{equation}
	\label{eq:derivativeF_lowerbound_L1}
	\partial_u f(\nu,u)(u'-u) \geq \nu c_0 \norm{u' - u}_{L^1(I\times\omega)}^2
	\quad\text{for all~} u' \in \Qad(0,1),
	\end{equation}
	where $c_0 = (2\norm{u_b-u_a}_{L^\infty(\omega)}C)^{-1}$.
\end{proposition}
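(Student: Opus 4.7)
The plan is to start from the explicit representation
\[
\partial_u f(\nu,u)(u'-u) \;=\; \nu\!\int_{I\times\omega} (B^*p)(t,x)\bigl(u'(t,x)-u(t,x)\bigr)\D{(t,x)},
\]
established in \cref{subsec:differentiability_delta}, so after dividing by $\nu$ it suffices to prove
\[
\int_{I\times\omega} (B^*p)(u'-u)\D{(t,x)} \;\geq\; c_0\,\norm{u'-u}_{L^1(I\times\omega)}^2.
\]
The first observation is that the structural assumption~\eqref{eq:assumption_adjoint_bb} implies that the zero set of $B^*p$ has measure zero, so by \cref{prop:optimaldistance_opt_control_unique} the optimal control $u$ obeys the projection formula~\eqref{eq:optimaldistance_control_projection_formula} almost everywhere. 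Combined with $u'\in\Qad(0,1)$, this gives the pointwise identity
\[
(B^*p)(u'-u) \;=\; \abs{B^*p}\,\abs{u'-u}\quad\text{a.e.~in~}I\times\omega,
\]
since on $\{B^*p>0\}$ we have $u=u_a$ and thus $u'-u\geq 0$, while on $\{B^*p<0\}$ we have $u=u_b$ and thus $u'-u\leq 0$.

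Next, I would introduce the super-level-set splitting at an arbitrary threshold $\varepsilon>0$. Writing $E_\varepsilon=\{(t,x)\colon \abs{(B^*p)(t,x)}\leq\varepsilon\}$, I would estimate
\[
\int_{I\times\omega}\abs{B^*p}\,\abs{u'-u}\D{(t,x)} \;\geq\; \varepsilon\int_{(I\times\omega)\setminus E_\varepsilon}\abs{u'-u}\D{(t,x)}.
\]
The integral on the right equals $\norm{u'-u}_{L^1(I\times\omega)}$ minus the contribution on $E_\varepsilon$, and that contribution is controlled by the pointwise bound $\abs{u'(t,x)-u(t,x)}\leq\norm{u_b-u_a}_{L^\infty(\omega)}$ together with the structural assumption $\abs{E_\varepsilon}\leq C\varepsilon$. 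This yields the quadratic-in-$\varepsilon$ lower bound
\[
\int_{I\times\omega}\abs{B^*p}\,\abs{u'-u}\D{(t,x)} \;\geq\; \varepsilon\,\norm{u'-u}_{L^1(I\times\omega)} \;-\; \varepsilon^{2}\,\norm{u_b-u_a}_{L^\infty(\omega)}\,C.
\]

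The final step is to optimize this bound in $\varepsilon>0$. The parabola on the right-hand side is maximized at $\varepsilon^{\star}=\norm{u'-u}_{L^1(I\times\omega)}/\bigl(2\norm{u_b-u_a}_{L^\infty(\omega)}\,C\bigr)$, and inserting $\varepsilon^{\star}$ produces a lower bound proportional to $\norm{u'-u}_{L^1(I\times\omega)}^{2}$ with the constant $c_0$ stated in the proposition. The main obstacle is a purely bookkeeping one: one must verify that $\varepsilon^\star$ is admissible (any $\varepsilon>0$ is) and that the constant matches~\eqref{eq:derivativeF_lowerbound_L1}; no additional analytic ingredient is needed beyond the projection formula and the measure-theoretic structural assumption.
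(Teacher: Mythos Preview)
Your argument is exactly the one the paper has in mind: it simply cites \cite[Proposition~2.7]{Casas2016}, whose proof proceeds via the same bang-bang identity $(B^*p)(u'-u)=\abs{B^*p}\,\abs{u'-u}$ and level-set splitting that you outline. One minor bookkeeping correction: maximizing $\varepsilon\,\norm{u'-u}_{L^1(I\times\omega)}-\varepsilon^{2}\norm{u_b-u_a}_{L^\infty(\omega)}C$ over $\varepsilon>0$ gives the value $\norm{u'-u}_{L^1(I\times\omega)}^{2}\big/\bigl(4\norm{u_b-u_a}_{L^\infty(\omega)}C\bigr)$, i.e.\ the constant $(4\norm{u_b-u_a}_{L^\infty(\omega)}C)^{-1}$ rather than the stated $c_0$. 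The sharper constant $c_0=(2\norm{u_b-u_a}_{L^\infty(\omega)}C)^{-1}$ is obtained if one replaces the single-threshold estimate by the layer-cake representation
\[
\int_{I\times\omega}\abs{B^*p}\,\abs{u'-u}\D{(t,x)}=\int_0^\infty\!\int_{\{\abs{B^*p}>s\}}\abs{u'-u}\D{(t,x)}\D{s}
\]
and integrates your lower bound $\norm{u'-u}_{L^1(I\times\omega)}-\norm{u_b-u_a}_{L^\infty(\omega)}Cs$ over $s\in\bigl(0,\norm{u'-u}_{L^1(I\times\omega)}/(\norm{u_b-u_a}_{L^\infty(\omega)}C)\bigr)$. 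Either way the precise value of $c_0$ is immaterial for the applications in \cref{subsec:local_Lipschitz_continuity}.
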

\begin{proof}
	The proof can be obtained along the lines of \cite[Proposition~2.7]{Casas2016}.
\end{proof}
For the following considerations, we assume that
the adjoint states $p(\nu_1)$ and $p(\nu_2)$ associated with 
the time transformations $\nu_1$ and $\nu_2$ 
and the states $i_1S(\nu_1, u)$
and $i_1S(\nu_2, u)$
satisfy
\begin{equation}
\label{eq:stability_estimate_adjoints}
\norm{\ControlOp^*\left(p(\nu_1) - p(\nu_2)\right)}_{L^\infty(I\times\omega)}
\leq c \abs{\nu_1 - \nu_2}
\end{equation}
for all $\nu_1, \nu_2 \in [\nu_{\text{min}}, \nu_{\text{max}}]$ 
and all $u \in \Qad(0,1)$, 
where $0 < \nu_{\text{min}} < \nu_{\text{max}}$ are constants.
The stability estimate~\eqref{eq:stability_estimate_adjoints} holds in case of purely-time dependent controls
under the general conditions of this article.
Moreover, the estimate can be shown in case of a distributed control for fairly general elliptic operators
and spatial domains; see, e.g., \cite[Proposition~A.3]{Bonifacius2018b}.

\begin{proposition}\label{prop:lipschitz_continuity_optimalcontrol_nu}
	Suppose that \eqref{eq:stability_estimate_adjoints} is valid
	and let $\bar{\nu} \in \Rplus$ with $\set{\bar{u}} = \bar{u}(\bar{\nu})$.
	If \eqref{eq:derivativeF_lowerbound_L1} holds at $(\bar{\nu},\bar{u})$ 
	for some constant $c_0 > 0$, then there is $\delta > 0$ such that
	\[
	\norm{u -\bar{u}}_{L^1(I\times\omega)} \leq c\abs{\nu-\bar{\nu}}\quad\text{for all~} \nu \in \Rplus, 
	\abs{\nu - \bar{\nu}} \leq \delta,\text{~and~}
	u \in \bar{u}(\nu), 
	\]
	with $c > 0$ a constant independent of $\nu$ and $u$.
\end{proposition}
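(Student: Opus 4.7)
The strategy is to exploit the growth condition \eqref{eq:derivativeF_lowerbound_L1} at $(\bar{\nu},\bar{u})$ together with the first-order optimality condition at $(\nu, u)$, and then to reduce the resulting adjoint mismatch to an expression controlled by $|\nu - \bar{\nu}|$ by inserting an auxiliary adjoint. The crucial ingredient is a monotonicity identity for the normalization map $\pi(z) = (z - y_d)/\norm{z - y_d}$, which allows the most delicate part of the mismatch (the one involving adjoints at different controls but the same time horizon) to be absorbed as a non-positive quantity.

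First, I would apply \eqref{eq:derivativeF_lowerbound_L1} at $(\bar{\nu}, \bar{u})$ with test $u$, and the variational inequality \eqref{eq:optimaldistance_opt_control_optconditions} at $(\nu, u)$ with test $\bar{u}$. Adding the resulting inequalities and using the explicit representation of $\partial_u f$ from \cref{subsec:differentiability_delta} yields
\[
\bar{\nu} c_0 \norm{u - \bar{u}}_{L^1(I\times\omega)}^2 \leq \int_0^1 \inner{\bar{\nu}\ControlOp^* \bar{p} - \nu\ControlOp^* p, u - \bar{u}}_{L^2(\omega)}\D{t},
\]
where $\bar{p}$ and $p$ are the adjoints associated with $(\bar{\nu}, \bar{u})$ and $(\nu, u)$ respectively. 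Introducing $\tilde{p}$, the adjoint state at $(\nu, \bar{u})$, I would split
\[
\bar{\nu}\ControlOp^*\bar{p} - \nu\ControlOp^* p = \bar{\nu}\ControlOp^*(\bar{p} - \tilde{p}) + (\bar{\nu}-\nu)\ControlOp^*\tilde{p} + \nu\ControlOp^*(\tilde{p} - p).
\]
The first summand is of order $|\nu - \bar{\nu}|$ in $L^\infty(I\times\omega)$ by \eqref{eq:stability_estimate_adjoints}, and the second is of the same order using uniform boundedness of $\ControlOp^*\tilde{p}$ on a neighborhood of $\bar{\nu}$. Both contribute at most $c|\nu - \bar{\nu}|\norm{u - \bar{u}}_{L^1}$ when tested against $u - \bar{u}$ via $L^\infty$-$L^1$ duality.

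The heart of the argument concerns the third summand, which involves adjoints at the same $\nu$ but different controls. Applying the standard duality between the forward equation for $\tilde{y} - y$, with $\tilde{y} = S(\nu, \bar{u})$ and $y = S(\nu, u)$, and the adjoint equation for $\tilde{p} - p$ yields
\[
\nu \int_0^1 \inner{\ControlOp^*(\tilde{p} - p), u - \bar{u}}_{L^2(\omega)}\D{t} = -\inner{\tilde{y}(1) - y(1), \pi(\tilde{y}(1)) - \pi(y(1))}_H.
\]
A direct computation, setting $e_i = \pi(z_i)$ and $r_i = \norm{z_i - y_d}$, gives the monotonicity
\[
\inner{z_1 - z_2, \pi(z_1) - \pi(z_2)}_H = (r_1 + r_2)\bigl(1 - \inner{e_1, e_2}_H\bigr) \geq 0
\]
whenever $z_1, z_2 \neq y_d$, so the third term is non-positive. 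Combining all three contributions,
\[
\bar{\nu} c_0 \norm{u - \bar{u}}_{L^1}^2 \leq c\abs{\nu - \bar{\nu}}\norm{u - \bar{u}}_{L^1},
\]
and dividing by $\norm{u - \bar{u}}_{L^1}$ gives the desired Lipschitz estimate with constant $c/(\bar{\nu} c_0)$.

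The main obstacle is ensuring that the adjoints $\bar{p}, \tilde{p}, p$ are well-defined and that \eqref{eq:stability_estimate_adjoints} applies uniformly in $u \in \bar{u}(\nu)$; in particular, one needs $\norm{y(1;\nu, u) - y_d}$ bounded below so that $\pi$ is Lipschitz on a fixed neighborhood of $\bar{y}(1)$. This is guaranteed by choosing $\delta > 0$ small enough that $\nu$ stays in a range where the value function $\delta(\cdot)$ (continuous by \cref{lemma:optimaldistance_value_function_differentiable}) keeps $\delta_0 + \delta(\nu)$ away from zero, which is consistent with the standing assumption $\set{\bar{u}} = \bar{u}(\bar{\nu})$ since uniqueness in \cref{prop:optimaldistance_opt_control_unique} requires $\delta(\bar{\nu}) > -\delta_0$.
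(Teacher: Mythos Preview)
Your proof is correct and follows essentially the same architecture as the paper's: combine the growth condition \eqref{eq:derivativeF_lowerbound_L1} with the optimality condition \eqref{eq:optimaldistance_opt_control_optconditions}, insert an intermediate adjoint, show that the ``same horizon, different control'' piece carries a favourable sign, and estimate the remaining adjoint mismatch via the stability bound \eqref{eq:stability_estimate_adjoints}. The differences are tactical. The paper inserts $p(\bar{\nu},u)$ (the adjoint at the \emph{old} time and the \emph{new} control) and, by using $\bar{\nu}$ rather than $\nu$ as the multiplier of the optimality inequality, obtains only two summands; you insert $\tilde{p}=p(\nu,\bar{u})$ (the \emph{new} time and the \emph{old} control) and therefore carry the extra cross term $(\bar{\nu}-\nu)B^*\tilde{p}$, which forces you to invoke a uniform $L^\infty$ bound on $B^*\tilde{p}$ that is not explicitly provided by \eqref{eq:stability_estimate_adjoints} alone. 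Conversely, your monotonicity identity for the normalization map $\pi(z)=(z-y_d)/\norm{z-y_d}$ is the more transparent justification for the sign of the ``bad'' term: the paper writes this contribution as $-\bar{\nu}\norm{i_1(\partial_t+\bar{\nu}A)^{-1}B(u-\bar{u})}^2$, which is literally the identity for the \emph{unnormalized} quadratic functional; with the normalized terminal datum the correct expression is precisely your $-(\tilde{y}(1)-y(1),\pi(\tilde{y}(1))-\pi(y(1)))$, and your computation shows why it is nonpositive.
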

\begin{proof}
	Let $u \in \bar{u}(\nu)$
	and let $p(\nu,u)$ denote the associated adjoint state.
	Employing \cref{prop:derivativeF_lowerbound_L1} with $u' = u$
	and the first order necessary optimality condition~\eqref{eq:optimaldistance_opt_control_optconditions} for $u$ yield
	\begin{multline*}
	\bar{\nu} c_0 \norm{u - \bar{u}}_{L^1(I\times\omega)}^2
	\leq \partial_u f(\bar{\nu},\bar{u})(u-\bar{u})
	\leq \partial_u f(\bar{\nu},\bar{u})(u-\bar{u}) - \bar{\nu}\inner{\ControlOp^*p(\nu,u),u-\bar{u}}_{L^2(I\times\omega)}\\
	= \bar{\nu}\inner{\ControlOp^*\left(\bar{p}-p(\bar{\nu},u)\right),u-\bar{u}}_{L^2(I\times\omega)} + \bar{\nu}\inner{\ControlOp^*\left(p(\bar{\nu},u) - p(\nu,u)\right),u-\bar{u}}_{L^2(I\times\omega)},
	\end{multline*}
	where $p(\bar{\nu},u)$ denotes the adjoint state
	associated with $\bar{\nu}$ and $u$.
	Concerning the first term on the right-hand side we observe
	\[
	\bar{\nu}\inner{\ControlOp^*\left(\bar{p}-p(\bar{\nu},u)\right),u-\bar{u}}_{L^2(I\times\omega)}
	= -\bar{\nu}\norm{i_1\left(\partial_t + \bar{\nu}A\right)^{-1}\ControlOp(u-\bar{u})}^2 \leq 0,
	\]
	where $\left(\partial_t + \bar{\nu}A\right)^{-1}$
	denotes the solution operator to the linear parabolic state equation with zero initial value. 
	Thus, H\"older's inequality implies
	\[
	c_0 \norm{u - \bar{u}}_{L^1(I\times\omega)}
	\leq \norm{\ControlOp^*\left(p(\bar{\nu}) - p(\nu)\right)}_{L^\infty(I\times\omega)}.
	\]
	Finally, we apply the stability estimate~\eqref{eq:stability_estimate_adjoints} to conclude the proof.
\end{proof}
Applying \cref{prop:lipschitz_continuity_optimalcontrol_nu} twice,
we immediately infer the following Lipschitz type estimate
\begin{corollary}
	\label{prop:optcontr_local_Lipschitz}
	There are $\delta > 0$ and $c > 0$ such that
	\begin{equation}
	\label{eq:optcontrol_Lipschitztype_estimate}
	\norm{u_1 - u_2}_{L^1(I\times\omega)} \leq c\abs{\nu_1 - \nu_2}
	\quad\text{for all~}u_1 \in \bar{u}(\nu_1)\text{~and~}
	u_2 \in \bar{u}(\nu_2),
	\end{equation}
	and all $\nu_1 \in [\bar{\nu} - \delta, \bar{\nu}]$
	and $\nu_2 \in [\bar{\nu}, \bar{\nu} + \delta]$.
\end{corollary}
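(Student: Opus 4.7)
The plan is to apply \cref{prop:lipschitz_continuity_optimalcontrol_nu} at both endpoints $\nu_1$ and $\nu_2$ and then conclude by the triangle inequality. More precisely, the previous proposition furnishes some $\delta > 0$ and $c > 0$ such that, whenever $|\nu - \bar{\nu}| \leq \delta$ and $u \in \bar{u}(\nu)$, one has
\[
\norm{u - \bar{u}}_{L^1(I\times\omega)} \leq c\abs{\nu - \bar{\nu}}.
\]
We use this $\delta$ and (up to doubling) this $c$ for the corollary.

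Given $\nu_1 \in [\bar{\nu} - \delta, \bar{\nu}]$, $\nu_2 \in [\bar{\nu}, \bar{\nu} + \delta]$, and any minimizers $u_1 \in \bar{u}(\nu_1)$, $u_2 \in \bar{u}(\nu_2)$, the triangle inequality combined with \cref{prop:lipschitz_continuity_optimalcontrol_nu} (applied separately at $\nu_1$ and at $\nu_2$) gives
\[
\norm{u_1 - u_2}_{L^1(I\times\omega)} \leq \norm{u_1 - \bar{u}}_{L^1(I\times\omega)} + \norm{\bar{u} - u_2}_{L^1(I\times\omega)} \leq c\abs{\nu_1 - \bar{\nu}} + c\abs{\nu_2 - \bar{\nu}}.
\]
The key observation is that, by the ordering of the three parameters, $\abs{\nu_1 - \bar{\nu}} + \abs{\nu_2 - \bar{\nu}} = (\bar{\nu} - \nu_1) + (\nu_2 - \bar{\nu}) = \nu_2 - \nu_1 = \abs{\nu_1 - \nu_2}$, which yields \eqref{eq:optcontrol_Lipschitztype_estimate}.

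Since \cref{prop:lipschitz_continuity_optimalcontrol_nu} is already in place, there is essentially no obstacle. The only subtle point worth making explicit is why one needs $\bar{\nu}$ to lie between $\nu_1$ and $\nu_2$: without this ordering, the sum of distances to $\bar{\nu}$ would not collapse to $\abs{\nu_1 - \nu_2}$, and one would only obtain a bound proportional to $\abs{\nu_1 - \bar{\nu}} + \abs{\nu_2 - \bar{\nu}}$, which could be strictly larger. This is exactly the reason the statement is formulated as a one-sided local Lipschitz bound straddling $\bar{\nu}$ rather than a fully two-sided one.
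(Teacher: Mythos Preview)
Your proof is correct and follows exactly the approach the paper indicates: apply \cref{prop:lipschitz_continuity_optimalcontrol_nu} twice (once for $\nu_1$, once for $\nu_2$) and combine via the triangle inequality, using the ordering $\nu_1 \le \bar\nu \le \nu_2$ so that $\abs{\nu_1-\bar\nu}+\abs{\nu_2-\bar\nu}=\abs{\nu_1-\nu_2}$. Your added remark explaining why the ordering is essential is a nice clarification beyond what the paper spells out.
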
	
Moreover, if $\ControlOp \in \mathcal{L}(L^1(\omega),H)$
then the control-to-state mapping is continuous from 
$L^1(I\times\omega)$ to $C([0,1]; H)$
and we infer the following result.
\begin{corollary}
	\label{cor:delta_local_Lipschitz}
	If $\ControlOp \in \mathcal{L}(L^1(\omega),H)$,
	then there are $\delta > 0$ and $c > 0$ such that
	\[
	\abs{\delta'(\nu_1) - \delta'(\nu_2)} \leq c\abs{\nu_1 - \nu_2}
	\]
	for all $\nu_1 \in [\bar{\nu} - \delta, \bar{\nu}]$
	and $\nu_2 \in [\bar{\nu}, \bar{\nu} + \delta]$.
\end{corollary}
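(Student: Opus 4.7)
The plan is to exploit the local $L^1$-Lipschitz behavior of the optimal controls established in \cref{prop:optcontr_local_Lipschitz}, together with the extra regularity afforded by the hypothesis $\ControlOp \in \mathcal{L}(L^1(\omega), H)$.

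Given $\nu_1 \in [\bar{\nu} - \delta, \bar{\nu}]$ and $\nu_2 \in [\bar{\nu}, \bar{\nu} + \delta]$, I select any $u_i \in \bar{u}(\nu_i)$ for $i = 1, 2$. By \cref{lemma:optimaldistance_value_function_differentiable}, I may represent $\delta'(\nu_i) = \partial_\nu f(\nu_i, u_i)$ and split
\[
\delta'(\nu_1) - \delta'(\nu_2) = \bigl[\partial_\nu f(\nu_1, u_1) - \partial_\nu f(\nu_2, u_1)\bigr] + \bigl[\partial_\nu f(\nu_2, u_1) - \partial_\nu f(\nu_2, u_2)\bigr],
\]
so that the task reduces to two independent estimates.

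For the first bracket, the control $u_1$ is fixed while only the time-transformation varies. Standard parabolic energy estimates, essentially those used in the proof of \cref{prop:state_finaltime_cont_transformation_uniform} applied simultaneously to the state $S(\cdot, u_1)$ and its associated adjoint, yield a bound of the form $c|\nu_1 - \nu_2|$ with a constant uniform in $u_1 \in \Qad(0,1)$. For the second bracket, $\nu_2$ is fixed and only the control varies; this is where the hypothesis $\ControlOp \in \mathcal{L}(L^1(\omega), H)$ becomes decisive. Combined with the uniform boundedness of the analytic semigroup $\semigroup{-\nu_2 t A}$ on $H$ for $t\in [0,1]$, Duhamel's formula delivers
\[
\sup_{t \in [0,1]}\norm{S(\nu_2, u_1)(t) - S(\nu_2, u_2)(t)} \leq c \norm{u_1 - u_2}_{L^1(I \times \omega)},
\]
with a constant independent of $u_1, u_2 \in \Qad(0,1)$. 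Using that $\delta(\bar{\nu}) + \delta_0 > 0$, the normalization in the terminal value of the adjoint state is Lipschitz continuous with respect to $y(1)$, so an analogous estimate propagates to the adjoint states and yields $|\partial_\nu f(\nu_2, u_1) - \partial_\nu f(\nu_2, u_2)| \leq c \norm{u_1 - u_2}_{L^1(I\times\omega)}$. Invoking \cref{prop:optcontr_local_Lipschitz} to bound the right-hand side by $c|\nu_1 - \nu_2|$ completes the estimate.

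The main obstacle will be the $L^1$-stability step in the second bracket: the variational $W(0,1)$-framework naturally controls the state only in terms of $L^2$-type norms of the control, and to obtain continuity of the control-to-state map from $L^1(I\times\omega)$ into $C([0,1];H)$ one must abandon the variational route and appeal directly to the semigroup representation, which is exactly the content of the added hypothesis $\ControlOp \in \mathcal{L}(L^1(\omega), H)$. Once this Duhamel estimate is in place, the remainder of the argument is a mechanical combination of already-established ingredients.
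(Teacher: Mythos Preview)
Your proposal is correct and follows essentially the same approach as the paper, which merely states that $\ControlOp \in \mathcal{L}(L^1(\omega),H)$ makes the control-to-state map continuous from $L^1(I\times\omega)$ into $C([0,1];H)$ and then invokes \cref{prop:optcontr_local_Lipschitz}. You have simply filled in the details the paper leaves implicit, in particular the splitting into a $\nu$-variation bracket (handled by standard smoothness of $S$ in $\nu$) and a $u$-variation bracket (handled by the $L^1$-Duhamel estimate together with the $L^1$-Lipschitz bound on the optimal controls).
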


\section{Algorithm}
\label{sec:algorithm}
We now turn to the algorithmic solution
of~\eqref{P}.	
Throughout the rest of this article we assume that $T(\cdot)$ is left-continuous.
In view of \cref{thm:equivalence_topt_dopt}, we are interested in finding 
a root of
the value function $\delta(\cdot)$ in order to solve the time-optimal control problem \eqref{P}. 
This will generally lead to a bi-level optimization problem: The outer loop finds the 
optimal $T$ and the inner loop determines for each given $T$ a 
control such that the associated state has a minimal distance to the target set.
It is worth mentioning that this procedure will
find a global solution to \eqref{P} provided that 
we initiate the outer optimization with a time smaller than the optimal one.

\subsection{Newton method for the outer minimization}

To find a root of the value function, we apply the Newton method.
As this requires $\delta(\cdot)$ to be continuously differentiable,
we require the following assumption.
Recall that \cref{assumption:optimaldistance_value_function_derivative_control_independent}
automatically holds in the case of purely time-dependent controls
and for the linear heat-equation on a bounded domain
with distributed control; see
\cref{cor:optimaldistance_value_function_backwards_uniqueness,cor:optimaldistance_value_function_parameter_control}.
\begin{assumption}
	\label{assumption:optimaldistance_value_function_derivative_control_independent}
	Suppose that the integral expression in~\eqref{eq:optimaldistance_value_function_derivative}
	is independent of the concrete minimizer $u \in \bar{u}(\nu)$
	for all $\nu \in \Rplus$ with $\delta(\nu) > -\delta_0$.
\end{assumption}	
For well-posedness of the method, we have to guarantee that $\delta'(\bar{\nu}) \neq 0$.
The following result underlines the practical relevance of qualified optimality conditions 
for~\eqref{Pt} in the context of its algorithmic solution.
\begin{proposition}\label{prop:equiv_qualified_optcond_deltaprime_nonzero}
	Let $(\bar{\nu},\bar{u}) \in \Rplus\times\Qad(0,1)$ be a solution to~\eqref{Pt}.
	The first order optimality conditions of \cref{lemma:first_order_optimality} 
	hold in qualified form
	if and only if $\delta'(\bar{\nu}) \neq 0$.
\end{proposition}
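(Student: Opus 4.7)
The plan is to reduce the equivalence to a single algebraic identity relating $\delta'(\bar{\nu})$ to the multiplier $\bar{\mu}_0$. First I would invoke \cref{thm:equivalence_topt_dopt} (which applies since $T(\cdot)$ is assumed left-continuous in \cref{sec:algorithm}) to conclude that $\bar{u} \in \bar{u}(\bar{\nu})$, i.e.\ the time-optimal control is also distance-optimal for $(\delta_{\bar{\nu}})$. By \cref{prop:existence}, $\|\bar{y}(1) - y_d\| = \delta_0 > 0$, and in particular $\delta(\bar{\nu}) = 0 > -\delta_0$, so \cref{lemma:optimaldistance_value_function_differentiable} together with \cref{assumption:optimaldistance_value_function_derivative_control_independent} applies, giving
\[
\delta'(\bar{\nu}) = \int_{0}^{1}\pair{B\bar{u} - A\bar{y}, p}\D{t},
\]
where $p \in W(0,1)$ solves the adjoint equation with terminal value $p(1) = (\bar{y}(1) - y_d)/\delta_0$.

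Next I would compare this adjoint $p$ with the adjoint $\bar{p}$ furnished by \cref{lemma:first_order_optimality}, whose terminal value is $\bar{p}(1) = \bar{\mu}(\bar{y}(1) - y_d) = \bar{\mu}\delta_0\, p(1)$. Since both $\bar{p}$ and $\bar{\mu}\delta_0 p$ solve the same homogeneous linear backward parabolic equation with the same terminal data, linearity of the adjoint equation yields
\[
\bar{p} = \bar{\mu}\delta_0\, p \quad\text{on~} (0,1).
\]
Substituting this into the Hamiltonian-constancy relation~\eqref{eq:opt_cond_hamiltonianConstant} gives
\[
-\bar{\mu}_0 = \int_{0}^{1}\pair{B\bar{u} - A\bar{y}, \bar{p}}\D{t} = \bar{\mu}\delta_0 \int_{0}^{1}\pair{B\bar{u} - A\bar{y}, p}\D{t} = \bar{\mu}\delta_0\,\delta'(\bar{\nu}).
\]

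From the identity $\bar{\mu}\delta_0\,\delta'(\bar{\nu}) = -\bar{\mu}_0$ together with $\bar{\mu} > 0$ and $\delta_0 > 0$, the claimed equivalence is immediate: if a qualified multiplier exists, then $\bar{\mu}_0 = 1$ forces $\delta'(\bar{\nu}) = -1/(\bar{\mu}\delta_0) \neq 0$; conversely, if $\delta'(\bar{\nu}) \neq 0$, then for any multiplier tuple the identity prevents $\bar{\mu}_0 = 0$, so necessarily $\bar{\mu}_0 = 1$, i.e.\ the conditions are qualified.

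I do not anticipate a serious obstacle here; the main point is simply recognising that the two adjoint states appearing in \cref{lemma:first_order_optimality} and \cref{lemma:optimaldistance_value_function_differentiable} are proportional because they solve the same linear backward equation with proportional terminal data, which then reduces the equivalence to reading off the sign of $\delta'(\bar{\nu})$ from the Hamiltonian-constancy condition. The only subtle ingredient is ensuring $\bar{u}$ is a distance-optimal control so that the formula for $\delta'(\bar{\nu})$ can legitimately be evaluated at $\bar{u}$ itself, which is guaranteed by \cref{thm:equivalence_topt_dopt}.
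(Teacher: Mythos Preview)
Your proposal is correct and follows essentially the same route as the paper: both arguments exploit that the adjoint $\bar{p}$ from \cref{lemma:first_order_optimality} and the adjoint $p$ from \cref{lemma:optimaldistance_value_function_differentiable} solve the same linear backward equation with proportional terminal data, so $\bar{p} = \bar{\mu}\delta_0\,p$, and then read off the identity $\delta'(\bar{\nu}) = -\bar{\mu}_0/(\bar{\mu}\delta_0)$ from~\eqref{eq:opt_cond_hamiltonianConstant}. Your version is in fact a bit more careful than the paper's, since you explicitly justify via \cref{thm:equivalence_topt_dopt} that $\bar{u}\in\bar{u}(\bar{\nu})$ (so that the derivative formula may be evaluated at $\bar{u}$) and explicitly invoke \cref{assumption:optimaldistance_value_function_derivative_control_independent}; the paper leaves both points implicit.
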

\begin{proof}
	According to the general form of the optimality conditions of \cref{lemma:first_order_optimality}
	there exist $\bar{\mu} > 0$ and $\bar{\mu}_0 \in \set{0,1}$ such that
	\[
	\int_0^1\pair{\ControlOp \bar{u} - A\bar{y},\bar{p}}\D{t} = -\bar{\mu}_0,
	\]
	where $\bar{p} \in W(0,1)$ is the adjoint state with
	terminal value $\bar{\mu}\left(\bar{y}(1)- y_d\right)$.
	Hence, linearity of the expression above and~\eqref{eq:optimaldistance_value_function_derivative}
	imply $\delta'(\bar{\nu}) = -\bar{\mu}^{-1}\bar{\mu}_0\norm{\bar{y}(1)-y_d}^{-1}$.
	Thus, qualified optimality conditions (i.e.\ $\bar{\mu}_0 = 1$) hold
	if and only if $\delta'(\bar{\nu}) \neq 0$.
\end{proof}
The resulting Newton method is summarized in \cref{alg:optimaldistance_outer_newton}.
By means of \cref{lemma:optimaldistance_value_function_differentiable} 
and well-known properties of the Newton method, see, e.g., \cite[Theorem~11.2]{Nocedal2006},
we obtain the following convergence result.
\begin{proposition}
	\label{prop:newton_method_convergence_superlinear}
	Let $\bar{\nu} \in \Rplus$ and suppose that 
	\cref{assumption:optimaldistance_value_function_derivative_control_independent} holds.
	If $\delta'(\bar{\nu}) \neq 0$, then the sequence $\nu_n$ generated by 
	\cref{alg:optimaldistance_outer_newton} converges locally q-superlinearly to $\bar{\nu}$.
\end{proposition}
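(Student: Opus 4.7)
The plan is to verify that all the hypotheses of the standard local convergence theorem for Newton's root-finding method (as cited via \cite[Theorem~11.2]{Nocedal2006}) are satisfied by $\delta(\cdot)$ in a neighborhood of $\bar\nu$, and then invoke that theorem directly. The three ingredients needed are: (i) $\bar\nu$ is a root of $\delta$; (ii) $\delta'$ is continuous on an open neighborhood of $\bar\nu$; (iii) $\delta'(\bar\nu) \neq 0$. Hypothesis~(iii) is assumed.

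For~(i), I would combine \cref{prop:existence} with the equivalence established in \cref{thm:equivalence_topt_dopt}. Indeed, the transformed problem \eqref{Pt} corresponds, via the change of variables of \cref{subsec:change_of_variables}, to the time-optimal problem~\eqref{P} with $\delta = 0$, so $\bar\nu = T(0)$. Any optimal pair $(\bar\nu,\bar u)$ satisfies $\norm{i_1 S(\bar\nu,\bar u) - y_d} = \delta_0$, so $\bar u$ is distance-optimal for $(\delta_{\bar\nu})$ with value $0$, hence $\delta(\bar\nu)=0$.

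For~(ii), I would argue that $\delta(\bar\nu)=0 > -\delta_0$, and by continuity of $\delta(\cdot)$ (\cref{prop:distanceoptimal_delta_continuous}) the strict inequality $\delta(\nu) > -\delta_0$ persists on an open neighborhood $\mathcal{N}$ of $\bar\nu$ in $\Rplus$. On $\mathcal{N}$, \cref{assumption:optimaldistance_value_function_derivative_control_independent} applies, so the hypotheses of \cref{lemma:optimaldistance_value_function_differentiable} yield that $\delta(\cdot)$ is continuously differentiable on $\mathcal{N}$.

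With (i)--(iii) in hand, the Newton iteration
\[
\nu_{n+1} = \nu_n - \frac{\delta(\nu_n)}{\delta'(\nu_n)}
\]
from \cref{alg:optimaldistance_outer_newton} is well-defined in a smaller neighborhood of $\bar\nu$ where $\delta'$ stays away from zero by continuity, and \cite[Theorem~11.2]{Nocedal2006} delivers local q-superlinear convergence to $\bar\nu$. No single step is really the ``hard part'' here; the substantive work has already been done in \cref{lemma:optimaldistance_value_function_differentiable} and \cref{assumption:optimaldistance_value_function_derivative_control_independent}, and the proposition reduces to assembling these pieces with the value-function identification $\delta(\bar\nu)=0$.
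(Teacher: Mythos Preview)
Your proposal is correct and matches the paper's own argument, which is just a one-line citation of \cref{lemma:optimaldistance_value_function_differentiable} together with \cite[Theorem~11.2]{Nocedal2006}; you have simply spelled out the pieces (in particular the identification $\delta(\bar\nu)=0$) that the paper leaves implicit. One small technical point: \cref{prop:distanceoptimal_delta_continuous} only asserts continuity of $\delta(\cdot)$ on $[0,T(0)]$, so with $\bar\nu=T(0)$ it gives you only left-continuity at $\bar\nu$; to get a full open neighborhood on which $\delta(\nu)>-\delta_0$, it is cleaner to invoke \cref{lemma:optimaldistance_value_function_differentiable} directly at $\bar\nu$ (which already yields continuity there) or to use \cref{prop:state_finaltime_cont_transformation_uniform}.
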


\begin{algorithm}
	Choose $\nu_0 > 0$\;		
	\For{$n = 0$ \KwTo $n_{\max}$}{
		Calculate $u \in \bar{u}(\nu_n)$ using \cref{alg:optimaldistance_inner_gcg} and $y = S(\nu_n,u)$\;
		\If{$\delta(\nu_n) < \algtol{tol}$}{ 
			\Return\; 
		}			
		Evaluate $\delta'(\nu_n)$ using \eqref{eq:optimaldistance_value_function_derivative}\;
		Set $\nu_{n+1} = \nu_n - \delta(\nu_n) \delta'(\nu_n)^{-1}$\;
	}
	\caption{Newton method for solution of minimal distance problem (outer loop)}
	\label{alg:optimaldistance_outer_newton}
\end{algorithm}

If we in addition assume that the control operator is bounded from $L^1$ into $H$, 
then the variation of constants formula implies that
the control-to-state mapping is linear and continuous 
from $L^1(I\times\omega)$ to $C([0,1]; H)$ for any fixed $\nu \in \Rplus$.
Hence, if the structural assumption~\eqref{eq:assumption_adjoint_bb}
on the adjoint state holds, we immediately obtain the following
fast convergence result.
\begin{proposition}
	\label{prop:newton_method_convergence_quadratic}
	Let $\bar{\nu} \in \Rplus$ and suppose that 
	\cref{assumption:optimaldistance_value_function_derivative_control_independent} holds.
	Moreover, assume 
	that~\eqref{eq:assumption_adjoint_bb} holds at $\bar{u}(\bar{\nu})$
	and that $\ControlOp \in \mathcal{L}(L^1(\omega), H)$.
	If $\delta'(\bar{\nu}) \neq 0$, then the sequence $\nu_n$ generated by 
	\cref{alg:optimaldistance_outer_newton} 
	converges locally q-quadratically to $\bar{\nu}$.
\end{proposition}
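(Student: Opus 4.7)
The plan is to reduce the proposition to the classical local q-quadratic convergence theorem for Newton's method applied to the scalar equation $\delta(\nu) = 0$ (see, e.g., \cite[Theorem~11.2]{Nocedal2006}), whose hypotheses are: $\delta$ is continuously differentiable near $\bar{\nu}$, $\delta'(\bar{\nu}) \neq 0$, and $\delta'$ is locally Lipschitz at $\bar{\nu}$. The second hypothesis is given, so the task is to establish the first and third.

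For continuous differentiability, since $\delta(\bar{\nu}) = 0 > -\delta_0$ and $\delta$ is continuous, there is a neighborhood of $\bar{\nu}$ on which $\delta(\nu) > -\delta_0$. \cref{lemma:optimaldistance_value_function_differentiable} together with \cref{assumption:optimaldistance_value_function_derivative_control_independent} then yields $C^1$ regularity of $\delta$ on this neighborhood. For local Lipschitz continuity of $\delta'$, I would directly invoke \cref{cor:delta_local_Lipschitz}: the structural assumption~\eqref{eq:assumption_adjoint_bb} at $\bar{u}(\bar{\nu})$ holds by hypothesis and $\ControlOp \in \mathcal{L}(L^1(\omega), H)$ is also assumed, giving the required estimate on an interval $[\bar{\nu}-\delta_*, \bar{\nu}+\delta_*]$. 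Although \cref{cor:delta_local_Lipschitz} states a ``crossing'' form of the Lipschitz estimate, setting one of its arguments equal to $\bar{\nu}$ yields $\abs{\delta'(\nu) - \delta'(\bar{\nu})} \leq c\abs{\nu - \bar{\nu}}$ on the full interval, which together with the triangle inequality suffices for the standard Taylor-remainder estimate underlying quadratic convergence.

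The remaining step is a routine application of the Newton theorem: continuity of $\delta'$ together with $\delta'(\bar{\nu}) \neq 0$ ensures that $\delta'$ is bounded away from zero near $\bar{\nu}$, so the iterates $\nu_{n+1} = \nu_n - \delta(\nu_n)/\delta'(\nu_n)$ are well-defined for $\nu_n$ close enough to $\bar{\nu}$, and the Lipschitz estimate gives the quadratic error recursion $\abs{\nu_{n+1} - \bar{\nu}} \leq C \abs{\nu_n - \bar{\nu}}^2$ for some constant $C > 0$. All of the substantive work is already contained in the chain \cref{prop:derivativeF_lowerbound_L1} $\rightarrow$ \cref{prop:lipschitz_continuity_optimalcontrol_nu} $\rightarrow$ \cref{prop:optcontr_local_Lipschitz} $\rightarrow$ \cref{cor:delta_local_Lipschitz}; there is no real obstacle in the present proof beyond correctly lining up these references with the hypotheses of the standard Newton theorem.
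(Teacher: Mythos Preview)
Your proposal is correct and follows essentially the same route as the paper: verify the hypotheses of the standard Newton convergence theorem \cite[Theorem~11.2]{Nocedal2006} by combining \cref{lemma:optimaldistance_value_function_differentiable} (via \cref{assumption:optimaldistance_value_function_derivative_control_independent}) for $C^1$-regularity with \cref{cor:delta_local_Lipschitz} for the Lipschitz bound on $\delta'$. Your explicit remark that the one-sided ``crossing'' Lipschitz estimate of \cref{cor:delta_local_Lipschitz} suffices for the Taylor remainder is precisely the point the paper notes in its final sentence.
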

\begin{proof}
	First, \cref{prop:newton_method_convergence_superlinear} guarantees
	q-linear convergence of the sequence $\nu_n$.
	The improved convergence rate follows from 
	Lipschitz continuity of $\delta'(\cdot)$, see \cref{cor:delta_local_Lipschitz},
	and well-known properties
	of the Newton method; see, e.g., \cite[Theorem~11.2]{Nocedal2006}.
	Note that the Lipschitz type estimate of \cref{cor:delta_local_Lipschitz} is sufficient
	for the proof of \cite[Theorem~11.2]{Nocedal2006}.
\end{proof}

\begin{remark}
	For convenience we summarize that under \cref{assumption:optimaldistance_value_function_derivative_control_independent}
	(which implies that $\delta(\cdot)$ is continuously differentiable)
	and if qualified optimality conditions hold for~\eqref{Pt}
	(which implies that $\delta'(\cdot)$ is nonzero near the optimal solution),
	the Newton method for finding a root of $\delta(\cdot)$ is well-defined.
	If in addition, $T(\cdot)$ is left-continuous, 
	then the root of $\delta(\cdot)$ is the optimal time for the time-optimal control problem~\eqref{Pt}.
\end{remark}

\subsection{Conditional gradient method for the inner optimization}

For the algorithmic solution of the inner problem, i.e.\ the determination 
of $\bar{u}(\nu)$ in~\eqref{eq:optimaldistance_opt_control},
we employ the conditional gradient method; see, e.g., \cite{Dunn1980}. 
We abbreviate
\[
f(u) = \norm{i_1S(\nu,u)-y_d}
\]
neglecting the $\nu$ dependence for a moment. 
Clearly, we are interested in minimizing $f$ over $\Qad(0,1)$. 
As in \cref{subsec:differentiability_delta},
we have
\[
f'(u)^* = \nu\ControlOp^*p,
\]
where $p \in W(0,1)$ solves \eqref{eq:optimaldistance_adjoint_state} with $y = S(\nu,u)$. 
Given $u_n \in \Qad(0,1)$, we take
\begin{equation}
\label{eq:optimaldistance_gcg_dq}
u_{n+1/2} = \begin{cases}
u_a, &\text{if } \ControlOp^*p_n > 0,\\
u_b, &\text{if } \ControlOp^*p_n < 0,\\
(u_a + u_b)/2, &\text{else},
\end{cases}
\end{equation}
almost everywhere.
This choice guarantees that
\[
f'(u_n)(u_{n+1/2} - u_n) = \min_{u \in \Qad(0,1)} f'(u_n)(u - u_n).
\]
The next iterate $u_{n+1}$ is defined by the 
optimal convex combination of $u_n$ and $u_{n+1/2}$. Precisely,
we take $u_{n+1} = (1-\lambda^*)u_n + \lambda^*u_{n+1/2}$ with
\begin{equation}
\label{eq:optimaldistance_gcg_lambda}
\lambda^* = \argmin_{0\leq \lambda \leq 1} f((1-\lambda)u_n + \lambda u_{n+1/2}).
\end{equation}
This expression can be analytically determined, employing the fact that $u \mapsto S(\nu,u)$ is affine linear.	
Using the convexity of $f$ and the definition of $u_{n+1/2}$, we immediately derive the following a posteriori error estimator
\[
0 \leq f(u_n)-f(\bar{u}) \leq f'(u_n)(u_n - \bar{u}) 
\leq \max_{u \in \Qad(0,1)} f'(u_n)(u_n - u) 
= f'(u_n)(u_n - u_{n+1/2}).
\]
The expression on the right-hand side can be efficiently evaluated using the adjoint representation 
and serves as a termination criterion for the conditional gradient method. 
The algorithm for the inner optimization is summarized in \cref{alg:optimaldistance_inner_gcg}.

\begin{algorithm}
	Let $\nu > 0$ be given. Choose $u_0 \in \Qad(0,1)$\;		
	\For{$n = 0$ \KwTo $n_{\max}$}{
		Calculate $y_n = S(\nu,u_n)$ and $p_n$\;
		Choose $u_{n+1/2}$ as in \eqref{eq:optimaldistance_gcg_dq}\;
		\If{$f'(u_n)(u_n - u_{n+1/2}) < \algtol{tol}$}{ 
			\Return\; 
		}		
		Calculate $\lambda^*$ as in \eqref{eq:optimaldistance_gcg_lambda}\;
		Set $u_{n+1} = (1-\lambda^*)u_n + \lambda^* u_{n+1/2}$\;
	}
	\caption{Conditional gradient method for solution of \eqref{eq:optimaldistance_opt_control}}
	\label{alg:optimaldistance_inner_gcg}
\end{algorithm}	

The conditional gradient method has the following convergence properties.
\begin{proposition}
	Let $(u_n)_n$ be a sequence generated by the conditional gradient method. 
	Then $f(u_n)$ decreases monotonically and
	\[
	0 \leq f(u_n) - f(\bar{u}) \leq 
	\frac{f(u_0) - f(\bar{u})}{1 + c n},\quad n \geq 0,
	\]
	with a constant $c$ exclusively depending on the Lipschitz constant of $f'$ on $\Qad(0,1)$, the initial residuum, and $\Qad$.	
\end{proposition}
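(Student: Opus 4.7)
The plan is to follow the classical conditional gradient (Frank--Wolfe) convergence argument, combining the Lipschitz continuity of $f'$ on $\Qad(0,1)$ with the a~posteriori duality bound already derived immediately before the proposition. Throughout, write $e_n \ldef f(u_n) - f(\bar u) \ge 0$, let $L$ denote the Lipschitz constant of $f'$ on $\Qad(0,1)$, and let $D$ denote the $L^2(I\times\omega)$-diameter of $\Qad(0,1)$, which is finite because $u_a, u_b \in L^\infty(\omega)$ and $|I|=1$.

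\textbf{Step 1 (monotone decrease).} Since $\lambda^* \in [0,1]$ is the exact minimizer of $\lambda \mapsto f((1-\lambda)u_n + \lambda u_{n+1/2})$ on $[0,1]$ and $\lambda = 0$ is admissible with value $f(u_n)$, one directly reads off $f(u_{n+1}) \leq f(u_n)$, which gives the claimed monotonicity.

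\textbf{Step 2 (descent lemma and scalar recursion).} The standard quadratic upper bound from Lipschitz continuity of $f'$ gives, for every $\lambda\in[0,1]$,
\[
f(u_{n+1}) \le f\bigl(u_n + \lambda(u_{n+1/2}-u_n)\bigr) \le f(u_n) + \lambda\, f'(u_n)(u_{n+1/2}-u_n) + \tfrac{L\lambda^2}{2}\,\|u_{n+1/2}-u_n\|_{L^2(I\times\omega)}^2,
\]
where the first inequality uses optimality of $\lambda^*$. Bounding $\|u_{n+1/2}-u_n\|_{L^2}\le D$ and combining with the a~posteriori estimate proved just before the statement,
\[
f(u_n) - f(\bar u) \le f'(u_n)(u_n - u_{n+1/2}),
\]
yields the scalar recursion
\[
e_{n+1} \le (1-\lambda)\,e_n + \tfrac{L\lambda^2 D^2}{2}\qquad\text{for all } \lambda\in[0,1].
\]

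\textbf{Step 3 (optimization and telescoping).} As long as $e_n \le L D^2$, the unconstrained minimizer of the right-hand side over $\lambda$ lies in $[0,1]$ and equals $\lambda = e_n/(L D^2)$, producing
\[
e_{n+1} \le e_n - \frac{e_n^2}{2 L D^2}.
\]
Dividing by $e_n e_{n+1}$ and using $e_{n+1} \le e_n$ from Step~1 gives $\tfrac{1}{e_{n+1}} - \tfrac{1}{e_n} \ge \tfrac{1}{2LD^2}$. Telescoping yields $\tfrac{1}{e_n} \ge \tfrac{1}{e_0} + \tfrac{n}{2LD^2}$, i.e.\
\[
e_n \le \frac{e_0}{1 + c n},\qquad c = \frac{e_0}{2 L D^2},
\]
which is the required rate with a constant $c$ depending only on $L$, $D$ (hence on $\Qad$), and the initial residuum $e_0 = f(u_0) - f(\bar u)$.

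\textbf{Main obstacle.} The only subtlety is the initial phase in which $e_n > L D^2$: there the scalar optimizer in Step~3 exceeds one, so the best admissible choice is $\lambda = 1$, giving a geometric-type decrease that forces $e_n$ below $LD^2$ after a number of iterations bounded in terms of $e_0, L, D$ only. This finite preliminary phase is then absorbed into the constant $c$ by enlarging it suitably, preserving the form of the bound. Besides this bookkeeping, one has to check that $f'$ is indeed Lipschitz on $\Qad(0,1)$; this follows from the affine linearity of $u\mapsto S(\nu,u)$, continuity of $i_1$, and smoothness of the map $y\mapsto \|y-y_d\|$ away from $y_d$, which holds on the iterates by the monotonicity of Step~1 provided $e_0 < f(\bar u)$ (equivalently $f(u_0) < 2f(\bar u)$), a mild qualitative assumption already implicit in the local nature of the result.
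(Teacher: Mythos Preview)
Your Steps 1--3 reproduce the classical Frank--Wolfe convergence argument correctly, and this is exactly what underlies the reference the paper invokes: the paper's own proof is the single sentence ``This follows from \cite[Theorem~3.1~(i)]{Dunn1980}, since both $f$ and $\Qad(0,1)$ are convex.'' So you are not taking a different route; you are unpacking the cited theorem. The handling of the initial phase $e_0>LD^2$ is also fine (in fact one step with $\lambda=1$ already gives $e_1\le LD^2/2$, and a short check shows the \emph{same} constant $c=e_0/(2LD^2)$ then works for all $n\ge 0$, so no enlargement is needed).

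The only real issue is the last two sentences of your ``Main obstacle'' paragraph. The extra hypothesis $e_0<f(\bar u)$ is both unnecessary and not what your stated reasoning would require. The Lipschitz continuity of $f'$ on $\Qad(0,1)$ is already part of the statement's hypotheses (the constant $c$ is said to depend on ``the Lipschitz constant of $f'$ on $\Qad(0,1)$''), so you need not verify it. If you do want to verify it, the correct argument is: in the regime of interest one has $\delta(\nu)>-\delta_0$, i.e.\ $f(\bar u)>0$, and since $\bar u$ minimizes $f$ over $\Qad(0,1)$ this gives $f(u)\ge f(\bar u)>0$ for \emph{every} $u\in\Qad(0,1)$. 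Hence the image of $\Qad(0,1)$ under the compact affine map $u\mapsto i_1S(\nu,u)$ is a compact set in $H$ avoiding $y_d$, on which $y\mapsto\|y-y_d\|$ is smooth with bounded second derivative. No condition on $e_0$ enters, and monotonicity of the iterates plays no role here. Your condition $f(u_0)<2f(\bar u)$ should simply be deleted.
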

\begin{proof}
	This follows from \cite[Theorem~3.1~(i)]{Dunn1980}, since both $f$ and $\Qad(0,1)$ are convex.
\end{proof}
If the control operator $\ControlOp$ defines a bounded operator from $L^1(\omega)$ to $H$, then under the structural assumption~\eqref{eq:assumption_adjoint_bb} on the adjoint state, the objective values converges q-linearly.
\begin{proposition}\label{prop:optimaldistance_gcg_fast_convergence}
	Suppose that $\ControlOp \in \mathcal{L}(L^1(\omega), H)$. 
	If~\eqref{eq:assumption_adjoint_bb} holds at $\bar{u}(\nu)$,
	then there is $\lambda \in [1/2,1)$ such that
	\begin{equation}
	\label{eq:optimaldistance_gcg_fast_convergence_functional}
	0 \leq f(u_n) - f(\bar{u}) \leq 
	\left[f(u_0) - f(\bar{u})\right]\lambda^n,\quad n \geq 0.
	\end{equation}
	The constant $\lambda$ exclusively depends on $C$, $u_a$, $u_b$, $\omega$, and the Lipschitz constant of $f'$ on $\Qad(0,1)$. 
	Moreover, for a constant $c > 0$ we have
	\begin{equation}
	\label{eq:optimaldistance_gcg_fast_convergence_controls}
	\norm{u_n - \bar{u}}_{L^1(I\times\omega)} \leq c\lambda^{n/2},\quad n \geq 0.
	\end{equation}
\end{proposition}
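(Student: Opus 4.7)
The plan rests on three ingredients: a descent inequality from Lipschitz continuity of $f'$ in the $L^1$ norm, a sharpness estimate tying the conditional-gradient direction to the linear gap through the structural assumption~\eqref{eq:assumption_adjoint_bb}, and the quadratic growth already contained in \cref{prop:derivativeF_lowerbound_L1}.

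I would first observe that $\ControlOp \in \mathcal{L}(L^1(\omega), H)$, together with the variation-of-constants formula, implies that $S(\nu,\cdot)\colon L^1(I\times\omega)\to C([0,1];H)$ is affine and continuous, so that $f'$ is Lipschitz on $\Qad(0,1)$ with some constant $L$ relative to $\|\cdot\|_{L^1(I\times\omega)}$. This produces the descent inequality $f(v) - f(u) \leq f'(u)(v-u) + (L/2)\|v-u\|_{L^1(I\times\omega)}^2$ for all $u, v \in \Qad(0,1)$. Applied along $u_n + \lambda(u_{n+1/2}-u_n)$ with $\lambda \in [0,1]$, together with the exact line search~\eqref{eq:optimaldistance_gcg_lambda}, the standard computation yields
\[
f(u_n) - f(u_{n+1}) \geq \min\left\{g_n/2,\; g_n^2/(2 L D_n^2)\right\},
\]
where $g_n \coloneqq f'(u_n)(u_n - u_{n+1/2})$ and $D_n \coloneqq \|u_{n+1/2} - u_n\|_{L^1(I\times\omega)}$. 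Since $\bar u \in \Qad(0,1)$ and $u_{n+1/2}$ minimizes the linearization on $\Qad(0,1)$, convexity of $f$ gives $g_n \geq f(u_n) - f(\bar u) \eqqcolon e_n$.

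The crucial step, and the main obstacle, is to prove a sharpness estimate $D_n^2 \leq c\,g_n$ with $c$ depending only on the quantities listed in the statement. The sign-selection rule~\eqref{eq:optimaldistance_gcg_dq} forces $u_n - u_{n+1/2}$ to be pointwise sign-aligned with $\ControlOp^*p_n$, so $g_n = \nu\int_{I\times\omega}|\ControlOp^*p_n|\,|u_n - u_{n+1/2}|\D{(t,x)}$. Splitting the integration domain along $\{|\ControlOp^*p_n|\geq\varepsilon\}$ and controlling the remaining part by $\|u_b-u_a\|_{L^\infty(\omega)}\,|\{|\ControlOp^*p_n|\leq\varepsilon\}|$ yields
\[
D_n \leq \frac{g_n}{\nu\varepsilon} + \|u_b-u_a\|_{L^\infty(\omega)}\,\bigl|\{|\ControlOp^*p_n|\leq\varepsilon\}\bigr|.
\]
The structural assumption applies a priori only to $\ControlOp^*\bar p$, so one has to transfer it to $\ControlOp^*p_n$. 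The bound $\|\ControlOp^*(p_n-\bar p)\|_{L^\infty(I\times\omega)} \leq c_1\|u_n-\bar u\|_{L^1(I\times\omega)}$, which follows from $\ControlOp \in \mathcal{L}(L^1,H)$ (hence $\ControlOp^* \in \mathcal{L}(H, L^\infty(\omega))$), stability of the terminal value $(y_n(1)-y_d)/\|y_n(1)-y_d\|$ near the nonzero vector $\bar y(1)-y_d$, and boundedness of the adjoint semigroup on $H$, combined with the $O(1/n)$ convergence of the preceding proposition and the quadratic growth forcing $\|u_n-\bar u\|_{L^1}\to 0$, implies $|\{|\ControlOp^*p_n|\leq\varepsilon\}|\leq 2 C\varepsilon$ for all $n$ large enough and all $\varepsilon\geq c_1\|u_n-\bar u\|_{L^1}$. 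Optimizing with $\varepsilon$ of order $\sqrt{g_n}$ then yields $D_n \leq c_2\sqrt{g_n}$; the finitely many initial iterates are absorbed by enlarging the constant.

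Combining $D_n^2 \leq c\,g_n$ with $g_n \geq e_n$ in the descent inequality produces $e_{n+1} \leq \left(1 - \min\{1/2,\,1/(2Lc)\}\right)e_n$, establishing~\eqref{eq:optimaldistance_gcg_fast_convergence_functional} with $\lambda = 1 - \min\{1/2,\,1/(2Lc)\} \in [1/2, 1)$. Finally, the quadratic growth $f(u_n) - f(\bar u) \geq \nu c_0\|u_n - \bar u\|_{L^1(I\times\omega)}^2$, which follows from \cref{prop:derivativeF_lowerbound_L1} applied at the optimum together with convexity of $f$, gives $\|u_n - \bar u\|_{L^1(I\times\omega)} \leq (\nu c_0)^{-1/2}\sqrt{e_n} \leq c\,\lambda^{n/2}$, proving~\eqref{eq:optimaldistance_gcg_fast_convergence_controls}.
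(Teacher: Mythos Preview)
Your argument is correct, but it is considerably more involved than the paper's. The paper simply observes that $B\in\mathcal{L}(L^1(\omega),H)$ makes $f$ smooth on $L^1(I\times\omega)$, invokes \cref{prop:derivativeF_lowerbound_L1} to obtain the first-order growth condition $f'(\bar u)(u-\bar u)\geq\nu c_0\|u-\bar u\|_{L^1}^2$ for all $u\in\Qad(0,1)$, and then applies \cite[Theorem~3.1~(iii)]{Dunn1980} directly to conclude~\eqref{eq:optimaldistance_gcg_fast_convergence_functional}; the control estimate~\eqref{eq:optimaldistance_gcg_fast_convergence_controls} follows exactly as you write.

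Your route instead reconstructs the linear rate by hand, and its distinctive feature is the transfer of the structural assumption from $\bar p$ to the iterate adjoint $p_n$ via the stability bound on $B^*(p_n-\bar p)$. This works, but it is unnecessary: once the quadratic growth at $\bar u$ is available one can bound $D_n$ without touching $p_n$. Indeed, since $u_{n+1/2}$ minimizes $f'(u_n)(\cdot)$ over $\Qad(0,1)$ one has $f'(u_n)(u_{n+1/2}-\bar u)\leq 0$, whence
\[
\sigma\|u_{n+1/2}-\bar u\|_{L^1}^2\leq f'(\bar u)(u_{n+1/2}-\bar u)
\leq (f'(\bar u)-f'(u_n))(u_{n+1/2}-\bar u)
\leq L\|u_n-\bar u\|_{L^1}\,\|u_{n+1/2}-\bar u\|_{L^1},
\]
so $\|u_{n+1/2}-\bar u\|_{L^1}\leq(L/\sigma)\|u_n-\bar u\|_{L^1}$ and therefore $D_n^2\leq(1+L/\sigma)^2\|u_n-\bar u\|_{L^1}^2\leq(1+L/\sigma)^2 e_n/\sigma\leq(1+L/\sigma)^2 g_n/\sigma$, with $\sigma=\nu c_0$. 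This is essentially Dunn's mechanism; it avoids the adjoint stability estimate altogether and keeps the dependence of $\lambda$ on the quantities listed in the statement, whereas your detour through $p_n$ makes $\lambda$ depend in addition on the constant $c_1$ controlling $\|B^*(p_n-\bar p)\|_{L^\infty}$ and on the lower bound for $\|y_n(1)-y_d\|$.
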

\begin{proof}
	Since $\ControlOp \colon L^1(\omega) \to H$, the variation of constants formula implies that
	the control-to-state mapping is linear and continuous from $L^1(I\times\omega)$ to $C([0,1]; H)$. Hence, $f$ as a mapping defined on $L^1(I\times\omega)$ is (infinitely often) continuously differentiable. 
	Furthermore, \cref{prop:derivativeF_lowerbound_L1} implies
	\[
	f'(\bar{u})(u-\bar{u}) \geq c_0\nu\norm{u-\bar{u}}^2_{L^1(I\times\omega)}\quad\text{for all~} u \in \Qad(0,1),
	\]
	for some constant $c_0 > 0$. Therefore, \eqref{eq:optimaldistance_gcg_fast_convergence_functional} follows from \cite[Theorem~3.1~(iii)]{Dunn1980}.
	Finally, convexity of $f$ and the inequality above yield~\eqref{eq:optimaldistance_gcg_fast_convergence_controls}.	
\end{proof}

\subsection{Accelerated conditional gradient method for the inner optimization}
\label{subsec:conditional_gradient_acc}

Since the criterion from \cref{prop:optimaldistance_gcg_fast_convergence} 
guaranteeing q-linear convergence of the conditional
gradient method is not satisfied for many examples
and we in fact observe slow convergence in practice,
we employ an acceleration strategy that is described in the following:
Instead of minimizing the convex combination of the last iterate $u_n$ and the new point $u_{n+1/2}$ in \eqref{eq:optimaldistance_gcg_lambda}, we search for the best convex combination of all previous iterates plus the new point $u_{n+1/2}$. 
Concretely, instead of~\eqref{eq:optimaldistance_gcg_lambda}
we determine $\lambda^*$ as
\begin{equation}
\label{eq:optimaldistance_gcg_lambda_acc}
\lambda^* = \argmin_{\lambda \in \ProbSimplex{n+2}} f(\sum_{i=0}^{n}\lambda_i u_i + \lambda_{n+1} u_{n+1/2}),
\end{equation}
where $\ProbSimplex{n+2} = \set{\lambda \in \R^{n+2} \colon \lambda_i \geq 0 
	\text{~and~} \sum_{i=0}^{n+1}\lambda_i = 1}$
denotes the probability simplex in $\R^{n+2}$.
The next iterate is then defined
as $u_{n+1} = \sum_{i=0}^{n}\lambda_i^* u_i + \lambda_{n+1}^* u_{n+1/2}$.
In order to derive an efficient algorithm for the determination of $\lambda^*$, 
we first reformulate the optimality condition associated with~\eqref{eq:optimaldistance_gcg_lambda_acc}
employing the normal map due to Robinson \cite{Robinson1992}.
To this end, let us abbreviate $h(\lambda) \ldef f(\sum_{i=0}^{n}\lambda_i u_i + \lambda_{n+1} u_{n+1/2})$.
For any $c > 0$ we define Robinson's normal map as
\[
G(\eta) = c(\eta - \ProjSimplex(\eta)) + \nabla h(\ProjSimplex(\eta)),
\]
where $\ProjSimplex$ denotes the projection onto $\ProbSimplex{n+2}$.
Due to convexity of $h$, which follows immediately from the convexity of $f$, 
an optimal solution of~\eqref{eq:optimaldistance_gcg_lambda_acc}
can be characterized by means of the normal map as follows; cf.\ \cite[Prop.~3.5]{Pieper2015}.
\begin{proposition}
	\label{prop:cg_acceleration_normalmap}
	$\lambda^* \in \ProbSimplex{n+2}$ is optimal for~\eqref{eq:optimaldistance_gcg_lambda_acc}
	if and only if there exists $\eta \in \R^{n+2}$ such that
	$G(\eta) = 0$ and $\lambda^* = \ProjSimplex(\eta)$.
\end{proposition}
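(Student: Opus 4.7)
The plan is to reduce the claim to the classical equivalence between the first-order variational inequality for a convex-constrained minimiser and the fixed-point condition of Robinson's normal map. Before starting I would record the two prerequisites. First, $f$ is convex on $\Qad(0,1)$ and the map $\lambda \mapsto \sum_{i=0}^{n}\lambda_i u_i + \lambda_{n+1}u_{n+1/2}$ is affine, so $h$ is convex and continuously differentiable on $\R^{n+2}$. Second, $\ProbSimplex{n+2}$ is a nonempty closed convex subset of $\R^{n+2}$, so the metric projection $\ProjSimplex$ is well defined and obeys the standard characterisation $\ProjSimplex(\eta) = \lambda$ if and only if $\lambda \in \ProbSimplex{n+2}$ and $\langle \eta - \lambda, \mu - \lambda\rangle \leq 0$ for every $\mu \in \ProbSimplex{n+2}$.

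For the forward implication I would set $\eta \ldef \lambda^* - c^{-1}\nabla h(\lambda^*)$. Convexity of $h$ together with optimality of $\lambda^*$ yields the variational inequality $\langle \nabla h(\lambda^*), \mu - \lambda^*\rangle \geq 0$ for every $\mu \in \ProbSimplex{n+2}$. Multiplying by $-c^{-1}$ transforms this into $\langle \eta - \lambda^*, \mu - \lambda^*\rangle \leq 0$, which is exactly the projection characterisation giving $\ProjSimplex(\eta) = \lambda^*$. Substituting into the definition of $G$ immediately produces $G(\eta) = c(\eta - \lambda^*) + \nabla h(\lambda^*) = 0$.

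For the reverse implication, assume $G(\eta) = 0$ and $\lambda^* = \ProjSimplex(\eta)$. The identity $c(\eta - \lambda^*) = -\nabla h(\lambda^*)$ combined with the projection inequality $\langle \eta - \lambda^*, \mu - \lambda^*\rangle \leq 0$ reproduces $\langle \nabla h(\lambda^*), \mu - \lambda^*\rangle \geq 0$ for every $\mu \in \ProbSimplex{n+2}$, and convexity of $h$ upgrades this first-order condition to global optimality of $\lambda^*$ for~\eqref{eq:optimaldistance_gcg_lambda_acc}.

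There is no substantive obstacle: the statement is essentially a bookkeeping translation between the variational-inequality and normal-map forms of the convex optimality condition, and it can alternatively be obtained by directly invoking \cite[Prop.~3.5]{Pieper2015} as the authors already indicate. The only point that needs a brief justification is the convexity and $C^1$-regularity of $h$, which follows from the affine dependence of the parametrisation on $\lambda$ and the smoothness and convexity properties of $f$ recorded in \cref{subsec:differentiability_delta}; once these are in hand, both implications are one-line rewritings.
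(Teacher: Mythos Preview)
Your argument is correct and follows essentially the same route as the paper: both set $\eta=\lambda^*-c^{-1}\nabla h(\lambda^*)$ for the forward direction, use the projection characterisation to pass between the variational inequality and the normal-map identity, and invoke convexity of $h$ to equate the first-order condition with optimality. The only difference is cosmetic---you spell out the prerequisites (convexity and smoothness of $h$, the projection lemma) a bit more explicitly than the paper does.
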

\begin{proof}
	First of all, $\lambda^*$ is optimal for~\eqref{eq:optimaldistance_gcg_lambda_acc}
	if and only if $h'(\lambda^*)(\lambda - \lambda^*) \geq 0$
	for all $\lambda \in \ProbSimplex{n+1}$, because $h$ is convex.
	Let $\lambda^* \in \ProbSimplex{n+2}$ be optimal
	and set $\eta = \lambda^*-c^{-1}\nabla h(\lambda^*)$.
	Then we have
	\[
	\inner{\eta-\lambda^*, \lambda - \lambda^*}_{\R^{n+2}} 
	= -c^{-1} h'(\lambda^*)(\lambda - \lambda^*) \leq 0	
	\]
	for all $\lambda \in \ProbSimplex{n+2}$.
	Hence, $\lambda^* = \ProjSimplex(\eta)$.
	Moreover, by construction $G(\eta) = 0$.
	Conversely, let $\eta$ be given such that $G(\eta) = 0$
	and $\lambda^* = \ProjSimplex(\eta)$. 
	Then 
	\[
	-c\inner{\eta-\lambda^*, \lambda - \lambda^*}_{\R^{n+2}}
	= h'(\lambda^*)(\lambda - \lambda^*) \geq 0	
	\]
	for all $\lambda \in \ProbSimplex{n+2}$. Thus, $\lambda^*$ is optimal.
\end{proof}
In view of \cref{prop:cg_acceleration_normalmap},
to determine $\lambda^*$ defined in~\eqref{eq:optimaldistance_gcg_lambda_acc},
we can equivalently solve the nonlinear equation $G(\eta) = 0$ for $\eta$
and obtain the optimal solution $\lambda^*$ by projecting $\eta$ onto the probability simplex, i.e.\ $\lambda^* = \ProjSimplex(\eta)$.
We propose to solve the equation $G(\eta) = 0$
by means of a semi-smooth Newton method.
Note that the projection $\ProjSimplex$ and its derivative $D\ProjSimplex$
can be efficiently evaluated (with cost $\mathcal{O}(n \log n)$), see \cref{alg:projection_simplex},
where we have extended the algorithm from \cite{Wang2013}
by adding the derivative $D\ProjSimplex$.
The resulting semi-smooth Newton method is summarized in \cref{alg:optimaldistance_inner_gcg_semismooth_newton}.
\begin{algorithm}
	Choose $c > 0$ and $\eta \in \R^{n+2}$\;
	Set $\lambda = \ProjSimplex(\eta)$\;		
	\While{$\norm{c(\eta - \lambda) + \nabla h(\lambda)}_{\R^{n+2}} > \algtol{tol}$}{
		Calculate $\xi = \left(c(\operatorname{Id} - D\ProjSimplex(\eta)) + \nabla^2 h(\lambda)D\ProjSimplex(\eta)\right)^{-1}\left(c(\eta - \lambda) + \nabla h(\lambda)\right)$\;
		Set $\eta = \eta - \xi$ and $\lambda = \ProjSimplex(\eta)$\;
	}
	\caption{Semi-smooth Newton method for solution of \eqref{eq:optimaldistance_gcg_lambda_acc}}
	\label{alg:optimaldistance_inner_gcg_semismooth_newton}
\end{algorithm}
\begin{algorithm}
	\KwIn{$y \in \R^n$}
	Sort $y$
	such that $y_{\pi(1)} \geq y_{\pi(2)} \geq \ldots \geq y_{\pi(n)}$\;
	Find $\rho = \max\set{1 \leq j \leq n \colon y_{\pi(j)} + \frac{1}{j}\left(1 - \sum_{i = 1}^{j}y_{\pi(i)}\right) > 0}$\;
	Define $\lambda = \frac{1}{\rho}\left(1 - \sum_{i = 1}^{\rho}y_{\pi(i)}\right)$\;
	Set $\Gamma = (\gamma_{i,j})_{i,j} \in \R^{n\times n}$ with $\gamma_{i,i} = 1$ if $x_i + \lambda > 0$ and $\gamma_{i,j} = 0$ otherwise for $1 \leq i,j \leq n$\;
	Set $x = (x_i)_i \in \R^n$ with $x_i = \max\set{y_i + \lambda, 0}$ for $1 \leq i \leq n$\;	
	Set $\Lambda = (\lambda_{i,j})_{i,j}  \in \R^{n\times n}$
	with $\lambda_{i,j} = -1/\rho$ if $j = \pi(k)$ for some $1 \leq k \leq \rho$
	and $\lambda_{i,j} = 0$ otherwise	
	for $1 \leq i, j \leq n$\;
	\KwOut{$\ProjSimplex(y) = x$ and $D\ProjSimplex(y) = \Gamma\left(\operatorname{Id} + \Lambda\right)$}
	\caption{Projection $\ProjSimplex$ onto the probability simplex $\ProbSimplex{n}$ and its derivative}
	\label{alg:projection_simplex}
\end{algorithm}

The accelerated conditional gradient method is exactly \cref{alg:optimaldistance_inner_gcg}
except for the last two lines: The parameter $\lambda^*$ is determined using \cref{alg:optimaldistance_inner_gcg_semismooth_newton} (in contrast to the standard conditional gradient method where we could calculate $\lambda^*$ explicitly).
Moreover, in the last line we set $u_{n+1} = \sum_{i=0}^{n}\lambda_i^* u_i + \lambda_{n+1}^* u_{n+1/2}$.
We note that the accelerated version is at least as fast as
the standard conditional gradient method, 
because the feasible set from~\eqref{eq:optimaldistance_gcg_lambda}
is contained in~\eqref{eq:optimaldistance_gcg_lambda_acc}.
In practice we observe that the acceleration strategy significantly
improves the performance.

\section{Numerical examples}
\label{sec:numerical_examples}
\newlength\convergencePlotWidth
\setlength\convergencePlotWidth{4.0cm}
\newlength\convergencePlotHeight
\setlength\convergencePlotHeight{3.8cm}

As a proof of concept, we implement numerical examples
illustrating that the proposed algorithm can be realized in practice.
We begin with one example governed by an ordinary differential equation,
even though our main focus are systems 
subject to partial differential equations.

Since the value function $\delta(\cdot)$ can be non-convex, 
we consider the damped Newton method.
If $\delta(\nu_{n+1}) < -\algtol{tol}$, then the
Newton step is iteratively multiplied by the damping factor $\gamma = 0.9$
until $\delta(\nu_{n+1}) > -\algtol{tol}$.
Note that this strategy does not require 
the inner problem to be solved with high accuracy.
If a feasible control with sufficiently negative value for $\delta(\cdot)$ is known, 
then the conditional gradient method can be 
restarted with a smaller Newton step.	

Moreover, we have implemented the acceleration strategy 
from \cref{subsec:conditional_gradient_acc}
for the conditional gradient method.
To keep the memory requirements moderate, points that are associated 
with small coefficients in the convex combination
are being removed from the list of former iterates. 
In our examples, this strategy significantly improves the convergence.

\subsection{Linearized pendulum}
\label{subsec:example17_2}
We first consider a time-optimal control example subject to 
an ordinary differential equation from \cite[Example~17.2]{Hermes1969}. 
The operators $A$ and $B$ are given by the matrices
\[
A = \left(\begin{array}{cc}
0 & -1\\
1 & 0
\end{array}\right),
\quad
B = \left(\begin{array}{c}
0\\
1
\end{array}\right).
\]
Hence, we set $V = H = V^* = \R^2$ and $Q = \R^1$.
Moreover, the control constraints are $u_a = -1$ and $u_b = 1$,
and the desired state is $y_d = 0$.
The corresponding state equation describes a harmonic oscillator, 
precisely the linearized pendulum $\ddot{x} + x = u$ with forcing term $u$.
Note that the system is normal, so \eqref{eq:optimaldistance_opt_control} possesses a unique minimizer; see \cref{prop:optimaldistance_unique_continuation,remark:unique_continuation}.
As shown in \cite[Example~17.2]{Hermes1969}, the optimal trajectories for $\delta_0 = 0$
can be constructed geometrically. 
For example, if
\[
y_0 = -r\left(\cos(\uppi/3 - \theta_0), \sin(\uppi/3 - \theta_0)\right)^T + (1,-3)^T, \quad 
\theta_0 = \arcsin(1/r),\quad r = \sqrt{17},
\]
then the optimal trajectory consists of three semi circles 
with $\theta = \uppi/3$ and center $(1,0)^T$, $\theta = \uppi$ and center $(-1,0)^T$, 
and $\theta = \uppi/2$ and center $(1,0)^T$. 
In addition, the optimal time is $T = \uppi (1/3 + 1 + 1/2) = 11\uppi/6$,
and the unique optimal control is given by
\[
\bar{u}(t) = \begin{cases}
1 &\text{if~} 0 \leq t \leq \uppi/3,\\
-1 & \text{if~} \uppi/3 < t \leq 4\uppi/3,\\
1 & \text{if~} 4\uppi/3 < t \leq  11\uppi/6.\\
\end{cases}
\]
The ordinary differential equation is discretized
by means of the discontinuous Galerkin method 
with piecewise constant functions
(corresponding to the implicit Euler method)
for an equidistant time grid with $M$ denoting the 
number of time intervals.	
To solve the problem with our approach, 
we consider a relaxation of the terminal constraint
by taking $\delta_0 = 10^{-6}$.
Since the solution is stable with respect to
perturbations in the constraint, 
the relaxation has no significant influence on the optimal solution,
as long as the error due to the discretization of
the state equation dominates the overall error.

As depicted in \cref{fig:example17_2_valuefct}
we observe fast convergence of the Newton method.
Moreover, the number of Newton steps in the outer loop
and the number of iterations of the conditional gradient
method in the inner loop seem to be essentially independent of
the discretization of the state equation; see \cref{table:example1_convergence}.

\begin{figure}[!ht]
	\begin{center}		
		\includegraphics{./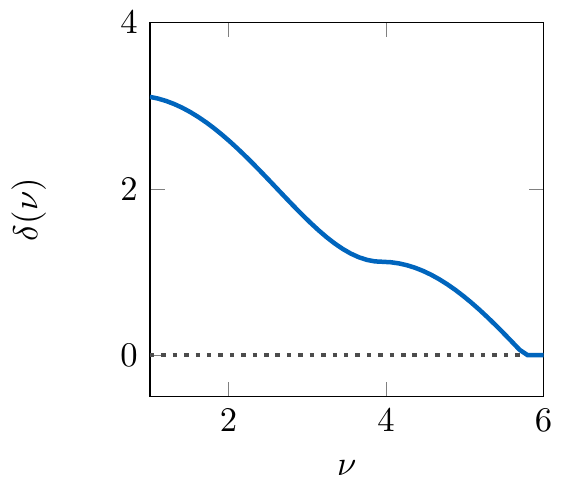}
		\includegraphics{./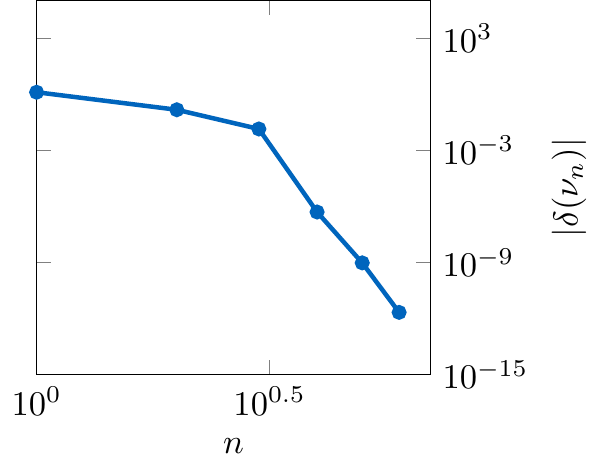}
	\end{center}
	\caption{Value function $\delta$ (left) and absolute value of $\delta(\nu_n)$ (right)
		for $\nu_n$ the iterates generated by the Newton method (\cref{alg:optimaldistance_outer_newton}) for Example~\ref{subsec:example17_2}
		with $M = 10\,000$ time steps for the implicit Euler method.}
	\label{fig:example17_2_valuefct}
\end{figure}

\begin{table}
	\begin{center}
		\begin{tabular}{rrrrr}
			\toprule
			$M$ & $T_k$ & $\abs{T_k-T}$ & Newton steps & cG steps\\
			\midrule
			$100$ & $5.501553$ & $2.5803_{-1}$ & $6$ ($1$) & $44$\\ 
			$1\,000$ & $5.730029$ & $2.9557_{-2}$ & $6$ ($1$) & $52$\\ 
			$10\,000$ & $5.756636$ & $2.9504_{-3}$ & $6$ ($1$) & $58$\\ 
			$100\,000$ & $5.759346$ & $2.4051_{-4}$ & $6$ ($1$) & $56$\\ 
			$1\,000\,000$ & $5.759618$ & $3.1244_{-5}$ & $6$ ($1$) & $53$\\ 
			\bottomrule
		\end{tabular}
	\end{center}
	\caption{Computed optimal times, absolute errors, number of Newton steps in outer loop 
		(number of damped steps in brackets), 
		and number of conditional gradient steps in inner loop
		for Example~\ref{subsec:example17_2}
		with $M$ denoting the number of time steps.
		Moreover, $\delta_0 = 10^{-6}$ and the inital value for the Newton method is $\nu_0 = 0.6\,T$.}
	\label{table:example1_convergence}
\end{table}

\subsection{Linear heat-equation with distributed control}
\label{subsec:example_HeatHat2}
Next, we consider the following problem subject to
the linear heat-equation. Let
\begin{align*}
\Omega &= (0,1)^2\mbox{,}\quad
\omega = (0.25, 0.75)^2\mbox{,}\quad \delta_0 = 1/10,\\
y_0(x) &= 4\sin(\pi x_1^2) \sin(\pi x_2)^3,\quad y_d(x) = -2\min\set{x_1, 1-x_1,x_2,1-x_2}\mbox{,}\\[0.5em]
\Qad(0,1) &= \set{u \in L^2(I\times\omega) \constraintSet -5 \leq u \leq 0}\mbox{.}
\end{align*}
Moreover, $A = -0.03\Lap$ with $-\Lap$ the Laplace operator equipped with 
homogeneous Dirichlet boundary conditions. 
The control operator $B$ is the extension by zero operator.	
Hence, we take $V = H^1_0(\Omega)$, $H = L^2(\Omega)$, $V^* = H^{-1}(\Omega)$,
and $\Q = L^2(\omega)$.
Note that the control acts on a subset $\omega \subset \Omega$, only. 
Concerning the practical implementation,
we consider a discontinuous Galerkin method
in time and a continuous Galerkin method in space.
The state and adjoint state equations are discretized by means of 
piecewise constant functions in time (corresponding to
the implicit Euler method) and continuous and cellwise linear 
functions in space.

\begin{table}
	\begin{center}
		\begin{tabular}{r rrrrr}
			\toprule
			$M$ & $N$ & $T_k$ & $\abs{T_k-T}$ & Newton steps & cG steps\\
			\midrule
			$20$ & $4225$ & $1.573876$ & $9.0814_{-2}$ & $6$ ($0$) & $206$\\ 
			$40$ & $4225$ & $1.525617$ & $4.2554_{-2}$ & $6$ ($0$) & $210$\\ 
			$80$ & $4225$ & $1.501899$ & $1.8836_{-2}$ & $6$ ($0$) & $209$\\ 
			$160$ & $4225$ & $1.490127$ & $7.0645_{-3}$ & $6$ ($0$) & $210$\\ 
			$320$ & $4225$ & $1.484246$ & $1.1830_{-3}$ & $6$ ($0$) & $196$\\
			\midrule 
			$640$ & $81$ & $1.291744$ & $1.9132_{-1}$ & $6$ ($0$) & $133$\\ 
			$640$ & $289$ & $1.423571$ & $5.9492_{-2}$ & $6$ ($0$) & $162$\\ 
			$640$ & $1089$ & $1.468758$ & $1.4304_{-2}$ & $6$ ($0$) & $188$\\ 
			$640$ & $4225$ & $1.481302$ & $1.7608_{-3}$ & $6$ ($0$) & $190$\\ 
			\bottomrule
		\end{tabular}
	\end{center}
	\caption{Computed optimal times, absolute errors, number of Newton steps in outer loop 
		(number of damped steps in brackets), 
		and number of conditional gradient steps in inner loop
		for Example~\ref{subsec:example_HeatHat2}
		with $M$ denoting the number of time steps
		and $N$ the number of nodes for the spatial discretization.
		Moreover, the inital value for the Newton method is $\nu_0 = 0.8$.}
	\label{table:example3_convergence}
\end{table}

\begin{figure}[!ht]
	\begin{center}
		\includegraphics{./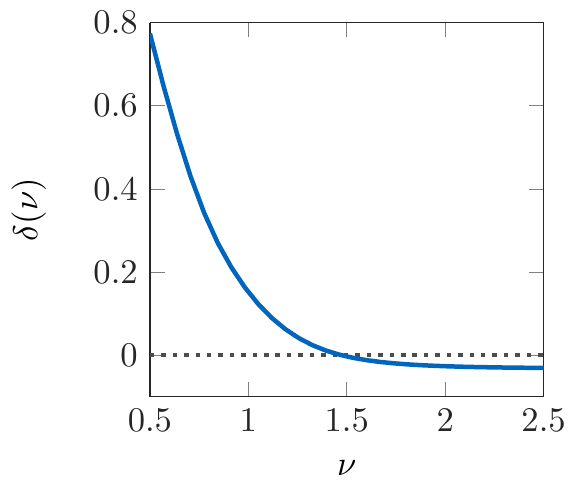}
		\includegraphics{./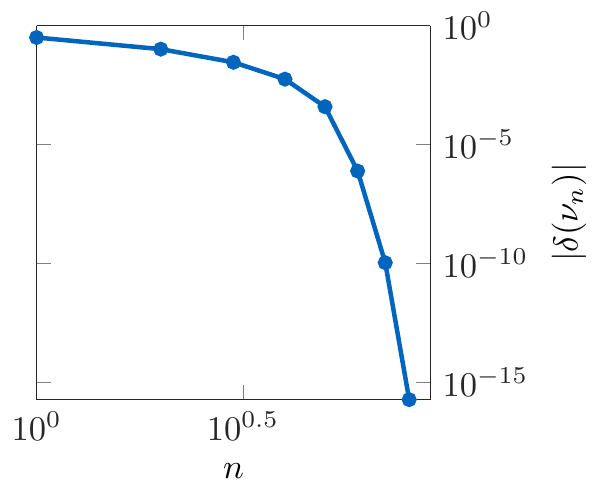}
	\end{center}
	\caption{Value function $\delta$ (left) and absolute value of $\delta(\nu_n)$ (right)
		for $\nu_n$ the iterates generated by the Newton method (\cref{alg:optimaldistance_outer_newton}) for Example~\ref{subsec:example_HeatHat2}.}
	\label{fig:example_HeatHat2}
\end{figure}

As in the first example,
we observe fast convergence of the Newton method, see \cref{table:example3_convergence,fig:example_HeatHat2}.
Moreover, we observe quadratic order of convergence
with respect to the spatial discretization
and linear order convergence
with respect to the temporal discretization.
For further details and a priori discretization error estimates
we also refer to \cite{Bonifacius2018b}.

Before turning to the next example, 
we would like to compare the algorithm from \cref{sec:algorithm} 
to an alternative approach, where the time-optimal control problem~\eqref{Pt}
is solved directly after adding a regularization term  to the objective functional,
precisely the $L^2$-norm of the control
variable.
Clearly, we are interested in steering 
the regularization parameter to zero.
The terminal constraint in~\eqref{Pt} is treated algorithmically
by means of the augmented Lagrange method. 
The resulting optimization problems are solved
by means of a semi-smooth Newton method 
in a monolithic way, i.e.\ we consider the tuple $(\nu, u)$ as
a joint optimization variable; cf.\ \cite{Kunisch2015} and \cite[Section~4.1]{Bonifacius2018}.
We observe that our approach requires roughly four to ten times less
solves of the PDE than the regularization approach for any fixed 
regularization parameter in the range from $\alpha = 0.001$
to $\alpha = 10$; see \cref{table:example2_costs}.
Employing a path-following strategy, 
where one iteratively decreases the regularization parameter
starting with a moderate value of $\alpha$
and uses the solution of the former iteration
as the initial value for the next optimization 
(see, e.g., \cite{Hintermueller2006a}),
one could avoid the high computational costs for small $\alpha$.
However, this strategy requires at least one solution 
without warm start, so that our approach
is (in this example) at least five times faster.
\begin{table}
	\begin{center}
		\begin{tabular}{rrcccccc}
			\toprule
			& & Min.\ dist.\ & \multicolumn{5}{c}{Augmented Lagrange method with regularization}\\
			$M$ & $N$ &  & $\alpha = 0.001$ & $\alpha = 0.01$ & $\alpha = 0.1$ & $\alpha = 1$ & $\alpha = 10$\\
			\midrule
			$20$ & $4225$ & $260$ & $2681$ & $1598$ & $1208$ & $1375$ & $1813$\\ 
			$40$ & $4225$ & $250$ & $2703$ & $1447$ & $1177$ & $1440$ & $1675$\\ 
			$80$ & $4225$ & $250$ & $2687$ & $1424$ & $1130$ & $1434$ & $1685$\\ 
			$160$ & $4225$ & $252$ & $3017$ & $1697$ & $1154$ & $1388$ & $1705$\\ 
			$320$ & $4225$ & $222$ & $3068$ & $1587$ & $1100$ & $1356$ & $1709$\\ 
			\midrule
			$640$ & $81$ & $214$ & $1879$ & $1091$ & $924$ & $1074$ & $1215$\\ 
			$640$ & $289$ & $198$ & $1824$ & $1070$ & $774$ & $1180$ & $1479$\\ 
			$640$ & $1089$ & $212$ & $2495$ & $1412$ & $1028$ & $1372$ & $1789$\\ 
			$640$ & $4225$ & $246$ & $3597$ & $1647$ & $1050$ & $1364$ & $1781$\\ 
			\bottomrule
		\end{tabular}
	\end{center}
	\caption{Number of PDE solves for the algorithm from \cref{sec:algorithm} 
		based on solving minimal distance problems
		and the augmented Lagrange method with $L^2$-regularization for the control
		and $\alpha$ the regularization parameter.
		$M$ denotes the number of intervals for the temporal discretization
		and $N$ the number of nodes for the spatial discretization of the PDE.
		The initial parameters for the augmented Lagrange method are $c_0 = 2\cdot 10^4$
		and	$\mu_0 = 80$.}
	\label{table:example2_costs}
\end{table}

\subsection{Linear heat-equation with Neumann boundary control}
\label{subsec:example_HeatNeumann1}

\newlength\neumannBoundaryControlPlotWidth
\setlength\neumannBoundaryControlPlotWidth{13.0cm}
\newlength\neumannBoundaryControlPlotHeight
\setlength\neumannBoundaryControlPlotHeight{4.0cm}

Last, we consider the following problem subject to
the linear heat-equation with Neumann boundary control. 
Concretely, let
\begin{align*}
\Omega &= (0,1)^2\mbox{,}\quad
\omega = \partial\Omega\mbox{,}\quad \delta_0 = 1/10,\\
y_0(x) &= 4\sin(\pi x_1^2) \sin(\pi x_2)^3,\quad y_d(x) = 0\mbox{,}\\[0.5em]
\Qad(0,1) &= \set{u \in L^2(I\times\omega) \constraintSet -5 \leq u \leq 5}\mbox{.}
\end{align*}
Moreover, $A = -0.03\Lap$ with $-\Lap$ the Laplace operator.
The control operator $B$ is the adjoint of the trace operator,
i.e.\ $B = \trace^* \colon L^2(\partial\Omega) \to (H^{1}(\Omega))^*$.	
Hence, we take $V = H^1(\Omega)$, $H = L^2(\Omega)$, $V^* = (H^{1}(\Omega))^*$,
and $\Q = L^2(\omega)$.
We consider the same discretization scheme
for the state and adjoint state equation as before.
Moreover, the control is discretized by edge-wise constant functions on the boundary.

\begin{figure}[!ht]
	\begin{center}
		\includegraphics{./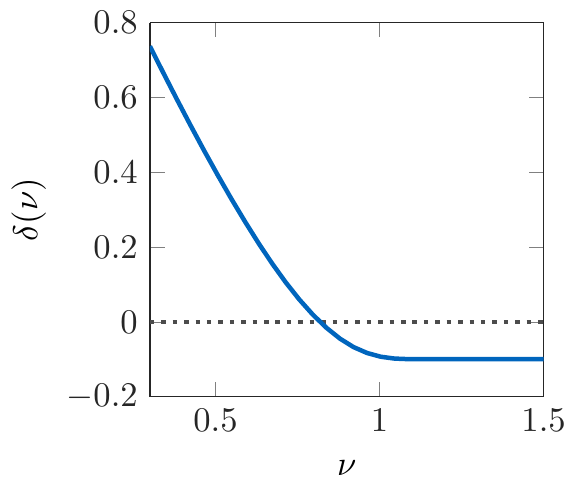}
		\includegraphics{./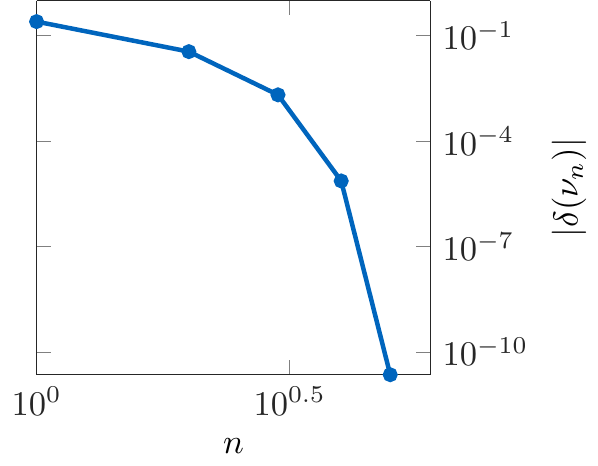}
	\end{center}
	\caption{Value function $\delta$ (left) and absolute value of $\delta(\nu_n)$ (right)
		for $\nu_n$ the iterates generated by the Newton method (\cref{alg:optimaldistance_outer_newton}) for Example~\ref{subsec:example_HeatNeumann1}.}
	\label{fig:example_HeatNeumann1}
\end{figure}

\begin{table}
	\begin{center}
		\begin{tabular}{r rrrrr}
			\toprule
			$M$ & $N$ & $T_k$ & $\abs{T_k-T}$ & Newton steps & cG steps\\
			\midrule
			$20$ & $4225$ & $8.8971_{-1}$ & $4.0493_{-2}$ & $4$ ($0$) & $3302$\\ 
			$40$ & $4225$ & $8.6784_{-1}$ & $1.8619_{-2}$ & $4$ ($0$) & $4141$\\ 
			$80$ & $4225$ & $8.5681_{-1}$ & $7.5882_{-3}$ & $4$ ($0$) & $4979$\\ 
			$160$ & $4225$ & $8.5134_{-1}$ & $2.1231_{-3}$ & $4$ ($0$) & $6294$\\
			\midrule 
			$320$ & $81$ & $6.9746_{-1}$ & $1.5176_{-1}$ & $3$ ($0$) & $1938$\\ 
			$320$ & $289$ & $8.1011_{-1}$ & $3.9113_{-2}$ & $4$ ($0$) & $5265$\\ 
			$320$ & $1089$ & $8.4096_{-1}$ & $8.2584_{-3}$ & $4$ ($0$) & $8029$\\ 
			$320$ & $4225$ & $8.4865_{-1}$ & $5.7155_{-4}$ & $4$ ($0$) & $8432$\\ 
			\bottomrule
		\end{tabular}
	\end{center}
	\caption{Computed optimal times, absolute errors, number of Newton steps in outer loop 
		(number of damped steps in brackets), 
		and number of conditional gradient steps in inner loop
		for Example~\ref{subsec:example_HeatNeumann1}
		with $M$ denoting the number of time steps
		and $N$ the number of nodes for the spatial discretization.
		Moreover, the initial value for the Newton method is $\nu_0 = 0.6$.}
	\label{table:example4_convergence}
\end{table}

The optimal control obtained numerically is depicted in \cref{fig:example_HeatNeumann1_control},
where the boundary of the square domain has been unrolled.
Note that switching hyperplanes of the control seem to accumulate 
towards the end of the time horizon.
As in the preceding examples, we observe fast convergence of 
the Newton method for the outer loop; see \cref{fig:example_HeatNeumann1,table:example4_convergence}.

\begin{figure}[!ht]
	\begin{center}
		\includegraphics{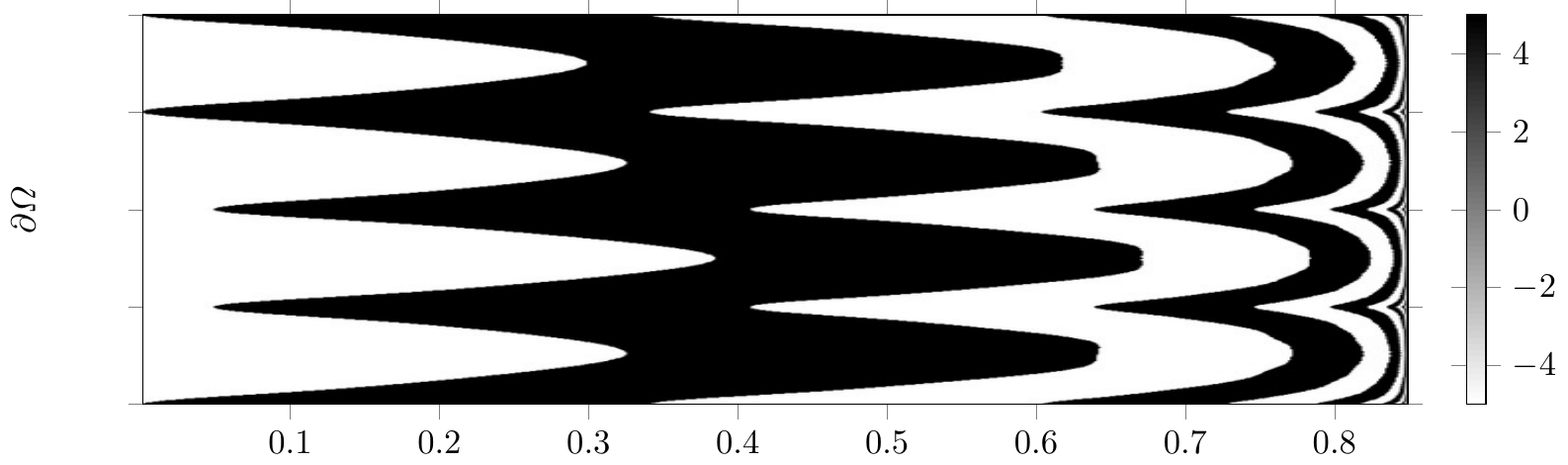}
	\end{center}
	\caption{Optimal control for Example~\ref{subsec:example_HeatNeumann1}. 
		Black denotes the upper bound and white the lower bound of the control constraints.}
	\label{fig:example_HeatNeumann1_control}
\end{figure}

\section{Open problems}
\label{sec:open_problems}
We conclude with some open problems.

\begin{enumerate}[(i)]
	\item
	To prove the equivalence of time-optimal
	and distance-optimal controls, we required that $T(\cdot)$
	is left-continuous; see \cref{thm:equivalence_topt_dopt}.
	We stated two sufficient conditions;
	see \cref{prop:T_continuousfromleft_sufficient,prop:T_Lipschitzfromleft_sufficient}.
	The latter can be checked a priori without knowing an optimal solution,
	whereas the first depends on a certain controllability condition
	under pointwise control constraints
	that is difficult to verify.	
	It would be desirable to know further sufficient conditions
	that can be easily verified for concrete problems.
	
	\item
	Moreover, to strengthen the directional derivative of $\delta(\cdot)$
	to a classical derivative, 
	one has to ensure that the integral expression
	in~\eqref{eq:optimaldistance_value_function_derivative} is 
	independent of the control variable.
	This is guaranteed for purely time-dependent controls (see \cref{cor:optimaldistance_value_function_parameter_control})
	or if a backwards uniqueness property holds (see \cref{cor:optimaldistance_value_function_backwards_uniqueness}).
	Clearly, the backwards uniqueness property 
	of other control scenarios
	is of independent interest
	and would also lead to more applications for our approach.
	
	\item
	Last, Lipschitz continuity of $\delta'(\cdot)$
	yields fast local convergence of the Newton method,
	which further justifies to use the equivalence
	of time-optimal and distance-optimal controls
	for numerical realization. 
	Here, we only stated one sufficient condition
	that relies on the structural assumption of 
	the adjoint state~\eqref{eq:assumption_adjoint_bb}; see \cref{prop:newton_method_convergence_quadratic}. 
	This condition does not seem to be sufficient as
	we observe Lipschitz continuity of $\delta'(\cdot)$ in the numerical examples
	even if~\eqref{eq:assumption_adjoint_bb} is violated.
	Different techniques to show Lipschitz continuity of $\delta'(\cdot)$
	would require a second order sufficient optimality condition.
	However, such a condition cannot be expected to hold
	in the case of bang-bang controls.
\end{enumerate}

\addcontentsline{toc}{section}{Bibliography}
\bibliographystyle{abbrv}

\end{document}